\documentclass{amsart}
\usepackage{tikz}
\usepackage{xcolor}
\usepackage{amssymb,latexsym,amsmath,extarrows}
\usepackage{graphicx,mathrsfs}
\usepackage{hyperref,url}
\numberwithin{equation}{section}

\usepackage{enumerate}

\newtheorem{theorem}{Theorem}[section]
\newtheorem{lemma}[theorem]{Lemma}

\newtheorem{proposition}[theorem]{Proposition}
\newtheorem{remark}[theorem]{Remark}
\newtheorem{question}[theorem]{Question}

\newtheorem{definition}[theorem]{Definition}

\newtheorem{conjecture}[theorem]{Conjecture}

\newcommand{\al}{\alpha}
\newcommand{\be}{\beta}

\newcommand{\e}{\varepsilon}

\newcommand{\ka}{\kappa}
\newcommand{\la}{\lambda}

\newcommand{\si}{\sigma}
\newcommand{\Si}{\Sigma}
\newcommand{\vp}{\varphi}
\newcommand{\om}{\omega}

%mathcalLIGRAPHIC

\newcommand{\cb}{\mathcal B}

\newcommand{\cj}{\mathcal J}

%GENERAL

\newcommand{\wt}{\widetilde}
\newcommand{\wh}{\widehat}

\newcommand{\ZR}{\mathbb{R}}
\newcommand{\ZT}{\mathbb{T}}
\newcommand{\ZZ}{\mathbb{Z}}

\newcommand{\ZN}{\mathbb{N}}
\newcommand{\ZS}{\mathbb{S}}
\newcommand{\ti}{\tilde}
\newcommand{\Id}{{\bf 1}}

\newcommand{\cT}{{\mathcal T}}
\newcommand{\cQ}{{\mathcal Q}}
\newcommand{\cl}{{\mathcal L}}

\newcommand{\dist}{{\rm{dist}}}

\newcommand{\Br}{{\rm{br}}}
\newcommand{\supp}{{\rm supp}}

\setcounter{tocdepth}{3}
\makeatletter
\def\l@subsection{\@tocline{2}{0pt}{2.5pc}{3pc}{}}
\makeatother

\title{On almost everywhere convergence of planar Bochner-Riesz means}

\author{Xiaochun Li} \address{Xiaochun Li\\  Department of Mathematics\\ University of Illinois Urbana-Champaign, USA} \email{xcli@illinois.edu}

\author{Shukun Wu} \address{ Shukun Wu\\  Department of Mathematics\\ Indiana University Bloomington, USA} \email{shukwu@iu.edu}

\begin{document}

\begin{abstract}
We demonstrate the almost everywhere convergence of the planar Bochner-Riesz means for $L^p$ functions in the optimal range when $5/3\leq p\leq 2$.  
This is achieved by establishing a sharp $L^{5/3}$ estimate for a maximal operator closely associated with the Bochner-Riesz multiplier operator. The estimate depends on a new refined $L^2$ estimate, which may be of independent interest.

\end{abstract}

{
\stepcounter{footnote}
\footnotetext{2020 Mathematics Subject Classifications: 42B08, 42B20}
}

\maketitle

\tableofcontents

\section{Introduction}\label{Intro}

\subsection{Background}

For any Schwartz function $f$, the $n$-dimensional Bochner-Riesz means $T^\la_tf$ is defined as
\begin{equation}
\label{br-mean}
    T_t^\la f(x):=(2\pi)^{-n}\int_{\mathbb{R}^n} \Big(1-\frac{|\xi|^2}{t^2}\Big)^\la_+\wh{f}(\xi)e^{ix\cdot\xi}d\xi.
\end{equation}
It can be extended to a multiplier operator with the Fourier multiplier $(1-|\cdot|^2/t^2)^\la_+$. 
Analogous to the Gauss means (where the multiplier is replaced by $e^{-|\cdot|^2/t}$) or the Abel means (where the multiplier is replaced by $e^{-|\cdot|/t}$), the Bochner-Riesz means were introduced as a summation method arising from the study of radial convergence of the Fourier transform. 
The issue of almost everywhere convergence concerning the Bochner-Riesz means stands out as one of the most intriguing and significant problems in modern analysis. It can be precisely formulated as follows:

\begin{question}
For any $L^p$ function $f$, what is the optimal regularity $\la$ required for the Bochner-Riesz means $T_t^\la f$ to converge to $f$ almost everywhere?
\end{question}

In the higher range $p\geq2$, the almost-everywhere convergence problem is completely solved in \cite{CRV}. However, the lower range $p<2$ poses significantly greater challenges. 
It is worth noting that, when $p<2$, Stein's maximal principle suggests that the following two claims are equivalent (see \cite{Tao-maximal}):
\begin{enumerate}[ i)]
    \item $\lim_{t\to\infty }T_t^\la f(x) %\stackrel{a.e.}
    {=}f(x)$ for a.e. $x\in \mathbb R^n$ and  every $L^p$ function $f$.
    \item The maximal Bochner-Riesz operator $T^\la_\ast$ is bounded in $L^p$.
\end{enumerate}
Here the maximal operator $T^\la_\ast$ is defined as
\begin{equation}
\label{MBR}
   T^{ \la}_\ast f(x):=\sup_{t>0}|T_t^\la f(x)|.
\end{equation}

Regarding $L^p$ estimates for the maximal operator, Tao \cite{Tao-weak-type-BR} proposed the following conjecture.

\begin{conjecture}\label{1.2}
\label{mbr-conj}
For any $1\leq p<2$, the maximal Bochner-Riesz operator $T^\la_\ast$ extends to a bounded operator in $L^p(\ZR^n)$ when $\la>\max\{0, \frac{2n-1}{2p}-\frac{n}{2}\}$, that is, 
\begin{equation}
\label{mbr-conj-esti}
    \big\| T^\la_\ast f\big\|_{L^p(\mathbb R^n)}\leq C \|f\|_{L^p(\mathbb R^n)}\,.
\end{equation}
\end{conjecture}

The conjecture has remained remarkably intriguing and challenging, with very limited progress made over the years.
Its motivation is rooted in the pointwise convergence of circular Fourier partial sums. Fefferman's renowned example \cite{Fefferman-Ball-multiplier}
shows that Conjecture \ref{1.2} cannot hold when $\lambda=0$. 
Consequently, when $p \neq 2$, the pointwise convergence problem for the Bochner-Riesz means is meaningful only for $\lambda \neq 0$. The most challenging scenario arises when $\lambda$ is close to zero and $p = 2 - \frac{1}{n}$. 
Prior to our work, there had been no positive conclusion for $\la$ near zero. Additionally, for higher dimensional cases when $n\geq 4$, no affirmative (non-trivial) result has been reached even when $\la$ is away from zero.
In the planar case, we are positioned to offer a partial resolution of this conjecture.

\begin{theorem}
\label{main}
Conjecture \ref{mbr-conj} is true when $n=2$ and $5/3\leq p\leq 2$.
\end{theorem}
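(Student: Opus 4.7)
The plan is to reduce the theorem to a sharp single-scale estimate at the endpoint $p = 5/3$ and interpolate with known $L^2$ maximal bounds. By Stein's maximal principle, Theorem \ref{main} is equivalent to the $L^p$-boundedness of $T^\la_\ast$ in the stated range. Since the $p=2$ case for $\la > 0$ is classical, real interpolation reduces matters to the endpoint $p = 5/3$. Dyadically decompose $(1-|\xi|^2)^\la_+ = \sum_k 2^{-k\la} m_k(\xi)$, with $m_k$ supported in an annulus of thickness $\delta = 2^{-k}$ around the unit circle, and let $T_\delta^\ast$ denote the associated maximal operator over all dilations $t > 0$. It then suffices to prove the single-scale bound
\[
\|T_\delta^\ast f\|_{L^{5/3}(\ZR^2)} \le C_\eps\, \delta^{-\eps} \|f\|_{L^{5/3}(\ZR^2)} \qquad \text{for every } \eps > 0,
\]
so that for every $\la > 0$ the series $\sum_k 2^{-k(\la - \eps)}$ converges after choosing $\eps < \la$.

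To attack this single-scale estimate, linearize $\sup_t$ through a measurable selector $t(x)$ and perform a wave packet decomposition of $f$ adapted to the $\delta \times \delta^{1/2}$ frequency caps on the unit circle and dual spatial tubes of dimensions $\delta^{-1} \times \delta^{-1/2}$. The central new ingredient is the refined $L^2$ estimate promised in the abstract: on each unit ball $B \subset \ZR^2$, the local quantity $\|T_\delta^\ast f\|_{L^2(B)}$ is controlled by a sharp geometric count of the wave packets that contribute significantly at $B$, improving on the Plancherel-type bound $\delta^{-1/2}\|f\|_{L^2}$.

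To convert this local $L^2$ gain into a global $L^{5/3}$ bound, use a broad/narrow dichotomy in physical space. In the narrow regime, the contributing tubes cluster in a thin angular sector, and a parabolic rescaling identifies the problem with a smaller-scale version of itself, enabling an induction on $\delta$ that absorbs the $\delta^{-\eps}$ loss. In the broad regime, the contributing tubes point in transverse directions; combine the refined $L^2$ input with a bilinear restriction estimate of Tao--Vargas--Vega type, or equivalently with the $\ell^2$-decoupling inequality for the circle in $\ZR^2$, together with a level-set argument over unit balls, to produce the target $L^{5/3}$ bound. The exponent $5/3$ emerges as the critical threshold at which the typical number of wave packets through a generic unit ball exactly balances the geometric gain furnished by the refined $L^2$ estimate.

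The main obstacle is proving the refined $L^2$ estimate itself. A direct Plancherel argument is too lossy, and one must quantitatively exploit the sparsity of wave packets concentrating on a unit ball. Moreover, the bound must remain uniform under the linearized dilation $t(x)$, which can in principle reshuffle the wave packet geometry pointwise. Establishing such a sharp, dilation-stable refined $L^2$ inequality, robust enough to survive interpolation down to the critical exponent $5/3$ without logarithmic losses, is the technical heart of the argument.
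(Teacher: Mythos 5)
Your opening reductions (Stein's maximal principle, dyadic decomposition of the multiplier, reduction to a single-scale maximal bound at the endpoint $p=5/3$, linearization of the supremum) do match the paper's Sections \ref{linearization-section}--\ref{Broad-narrow-section}, although note that passing from $\sup_{t>0}$ to $t\in[1,2]$ already requires Tao's localization result (Proposition \ref{localization-prop}), which you elide. The genuine gap is in the core of the argument: you never supply a mechanism that actually produces the exponent $5/3$. In the broad regime you propose to combine a refined $L^2$ input with a Tao--Vargas--Vega bilinear restriction estimate ``or equivalently'' $\ell^2$-decoupling plus a level-set argument. But the paper's own heuristic in the introduction shows that the bilinear weighted $L^2$ estimate \eqref{multi-weighted-l2} (which is what bilinear restriction buys you) only yields the range $16/9\leq p\leq 2$, and decoupling-based arguments are exactly what produced the smaller region C of earlier work; the paper explicitly argues decoupling is the wrong tool below $L^2$. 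Reaching $5/3$ requires removing the bilinear structure from one factor, which is the content of the refined broad-norm estimate over multiscale regular sets (Proposition \ref{l2-refine-prop}), and then combining \emph{three} inputs: the regular-set $L^2$ bound (Proposition \ref{l2-prop1}), a second $L^2$ bound obtained from a local $TT^*$ estimate together with Kakeya/Wolff-axiom tube counting (Lemma \ref{non-concentration-lem}, Lemma \ref{kakeya-type-lem}, Proposition \ref{l2-prop2}), and an $L^{4/3}$ estimate proved by dualizing to $L^4$ and using C\'ordoba-type biorthogonality (Proposition \ref{l4/3-prop}). Your proposal contains no analogue of the $L^{4/3}$/dual-$L^4$ component or of the Kakeya-type counting, and without them the interpolation cannot close at $5/3$; the constraint $\la\nu\lesssim \al R^2$ and the level-set decomposition $f=\sum_k f_k$ are what make the final H\"older step work in Section \ref{wrap-up}.

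Two further points. First, the ``central new ingredient'' you invoke --- a per-unit-ball refined $L^2$ bound by a wave-packet count --- is both mis-stated (the paper's Proposition \ref{l2-refine-prop} is a global estimate over a set $F$ that must first be made regular at every scale via the greedy algorithm of Lemmas \ref{regular-lem-single}--\ref{regular-lem-multi}, and it gains $(|F|/R^2)^{1/2}$ only for the broad norm, not for $\|Sf\|_{L^2(B)}$ itself, cf.\ the Knapp obstruction discussed before \eqref{multi-weighted-l2}) and simply assumed: you acknowledge it is the technical heart but give no approach to proving it. Second, your narrow-case plan of closing an induction on $\delta$ by parabolic rescaling is not justified: rescaling the $\delta$-annulus maximal operator below $L^2$, with the measurable sets $F_j$ attached, does not reproduce the same problem at a smaller scale, and the paper does not argue this way --- the narrow case ($\al=R^{-1/2}$) is handled inside the same pigeonholing framework, with rescaling used only to place the sets $T_j$ into the regular-set machinery. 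As it stands the proposal is a plausible-sounding outline whose decisive steps either fail (bilinear/decoupling gives at best $16/9$) or are missing.
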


\begin{figure}
\begin{tikzpicture}
\draw[thick] (0,0) -- (9,0) node [anchor=north west] {};
\draw[thick] (9,0) -- (9,9) node [anchor=north east] {};
\node at (4.5,-1) {\footnotesize $1/p$};
\node at (0,-0.3) {\small $\frac{1}{2}$};
\node at (3,-0.3) {\small $\frac{2}{3}$};
\node at (9,-0.3) {\small $1$};
\node at (9.3,9) {\small $\frac{1}{2}$};
\node at (10,9) {\footnotesize $\la$};
\draw (3,0) -- (9,9);
\draw (0,0) -- (9,9);

\draw[dotted] (0,0) -- (2.93,1.88);
\draw[dotted] (4,3) -- (2.93,1.88);
\draw[dotted] (4,3) -- (9,9);
\draw [fill=gray, opacity=0.4] (9,9) to (1.9,0) to  (0,0) to (2.93,1.88) to (4,3) to (9,9);

\node at (7,3) {A};
\node at (3.4,1.3) {B};
\node at (3.5,3) {C};

\node at (1.9,-0.3) {\small $\frac{3}{5}$};
\draw (1.9,0) -- (1.9,0.1);
%\node at (3.6,-0.2) {\tiny $\frac{7}{10}$};
%\draw (3.6,0) -- (3.6,0.1);
%\node at (4,-0.2) {\tiny $\frac{13}{18}$};
%\draw (4,0) -- (4,0.1);
\draw[fill] (3.6,2.7) circle (.25mm);
\draw[fill] (3.6,2.6) circle (.25mm);
\draw[fill] (3.6,2.7) circle (.25mm);
\draw[fill] (4,3) circle (.25mm);
\draw[fill] (2.93,1.88) circle (.25mm);
\draw (1.9,0) -- (9,9);
%\draw (4,3) -- (9,9);
%\draw (4,3) -- (0,0);
\draw [fill=gray, opacity=0.2] (9,9) to (0,0) to (1.9,0) to  (9,9) ;
\end{tikzpicture}
\caption{$L^p$ behavior of the maximal Bochner-Riesz operator}
\label{fig1}
\end{figure}

Theorem \ref{main} is the first instance of attaining sharp and non-trivial conclusions for Conjecture \ref{mbr-conj} when $p<2$ and $\lambda$ near zero. As a direct implication,  within the range $5/3\leq p\leq2$, the Bochner-Riesz means $T^\la_t f$ converge to $f$ for any $f\in L^p(\ZR^2)$ if $\la>0$. 
 Theorem \ref{main} encompasses all prior findings in the plane in \cite{Tao-MBR,Li-Wu-MBR,GW,Kim}, which collectively affirm the pointwise convergence conclusion for 
$(1/p, \lambda)$ within the quadrilateral region $C$ depicted in Figure \ref{fig1}.
Theorem \ref{main} extends the known region to a significantly larger area,  specifically, the triangular region defined by three points: $(1/2,0), (3/5, 0)$, and $ (1, 1/2)$. 
Region $B$ remains open, while Region $A$ is invalidated by Tao's example, where, after suitable reductions on $T^\la_\ast$, one tests \eqref{max-p-Sf} with $f(x_1,x_2)=a(R^{-1/2}x_1)a(x_2)e^{ix_1}$ for a standard bump function $a$. 

\medskip

Our approach diverges from prior studies.
The key ingredient is a quantitative version of weighted $L^2$ estimates, where the weight is an arbitrary union of unit balls. 
These refined $L^2$ estimates extend beyond addressing the maximal Bochner-Riesz operators, such as the reverse square function inequality in the plane, suggesting broader applications for the quantitative weighted $L^2$ inequalities. We anticipate further applications stemming from these insights.

\medskip

Our main theorem, Theorem \ref{main}, is established through $L^p$ estimates of a specific maximal operator, which is closely related to the classical Bochner-Riesz multiplier. Before proceeding, let's introduce the definition of this maximal operator.
For $t\in [1,2]$ and any positive real number $R$, define $S_{t, R}$ by
\begin{equation}\label{defS-tR}
\wh{S_{t, R}f}(\xi) = a(R(t-|\xi|)) \wh f(\xi)\,
\end{equation}
for any $\xi\in\mathbb R^2$ and any Schwartz function $f$, where the function $a$ represents a standard bump function on $[-1,1]$.  
The multiplier $a(R(t-|\xi|))$ here is a bump function supported on the annulus 
$A_{t, R}:=\{\xi\in\mathbb R^2: t-R^{-1}\leq |\xi| \leq t+R^{-1}\}$.  We define the corresponding maximal operator
$S^*_R$ by,  
\begin{equation}\label{Def-SR}
 S^*_R f(x) =\sup_{t\in [1,2]} \big| S_{t, R}f(x)\big|\,,
\end{equation}
for any $x\in\mathbb R^2$.

\smallskip

By means of particular simplifications detailed in Section \ref{linearization-section}, establishing Theorem \ref{main} can be accomplished by focusing solely on the following $L^p$-estimate of the maximal operator, which formulates our central theorem in this article. 

\begin{theorem}\label{Lp-max}
For any $\e>0$ and any $5/3\leq p\leq 2$,  there is a constant $C_\e$ such that 
\begin{equation}\label{max-p-Sf}
 \big\| S^*_R f\big\|_{L^p(\mathbb R^2)}\leq C_\e R^\e \|f\|_{L^p(\mathbb R^2)}\,,
\end{equation}
for any Schwartz function $f$ and any $R\geq 1$. 
\end{theorem}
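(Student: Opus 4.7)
The plan is to deduce the range $5/3 < p \leq 2$ by interpolating an endpoint $L^{5/3}$ estimate (with $R^\e$ loss) against the trivial $L^2$ bound $\|S^*_R f\|_{L^2} \lesssim \|f\|_{L^2}$. The $L^2$ bound follows from the Sobolev-type identity
\[
\sup_{t\in[1,2]} |S_{t,R}f(x)|^2 \lesssim |S_{1,R}f(x)|^2 + \int_1^2 |S_{t,R}f(x)|\,|\partial_t S_{t,R}f(x)|\,dt,
\]
Cauchy-Schwarz in $t$, and Plancherel, noting that $R^{-1}\partial_t S_{t,R}$ is morally of the same type as $S_{t,R}$ (differentiation in $t$ brings down a factor $R$ that cancels the prefactor), so both pieces are $L^2$-bounded by $R^{-1/2}$.

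For the endpoint $p=5/3$, I would first linearize the supremum via a measurable selection $t(\cdot):\mathbb{R}^2\to[1,2]$, reducing the problem to an $L^{5/3}$ bound for the linear operator $L f(x) := S_{t(x),R}f(x)$. Then decompose $f=\sum_\theta f_\theta$ according to a partition of the annulus $\{|\xi|\approx 1\}$ into arcs of angular width $R^{-1/2}$, so that each $f_\theta$ is essentially a sum of wave packets localized to tubes of dimensions $R\times R^{1/2}$ tangent to some circle of radius in $[1,2]$ in the direction $\theta^\perp$. On each unit ball $B\subset\mathbb{R}^2$, run a broad-narrow dichotomy (or a polynomial partitioning analysis): either $Lf$ is dominated by a narrow cone of angular contributions, in which case one recurses by induction on the scale $R$, or it is dominated by a broad collection of angularly well-separated wave packets, for which bilinear restriction estimates of Fefferman-C\'ordoba type apply.

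The heart of the argument is the refined $L^2$ estimate advertised in the abstract: for $F=\sum_T\psi_T$ a sum of wave packets on tubes tangent to a circle, and $B$ a unit ball,
\[
\|F\|_{L^2(B)}^2 \lesssim M(B)^{-\beta}\sum_{T\cap B\neq\emptyset}\|\psi_T\|_{L^2(B)}^2,
\]
where $M(B)$ measures the geometric density of angular directions represented through $B$ and $\beta>0$ is some quantitative gain over naive Plancherel. Inserting this refined orthogonality into the broad estimate, then summing over unit balls via Kakeya-type incidence bounds for tubes tangent to circles of comparable radii, should convert the $L^2$ gain into the target $L^{5/3}$ inequality, with the $R^\e$ loss absorbing the loss from the induction on scale.

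The principal obstacle is proving the refined $L^2$ inequality with $\beta$ large enough to close the induction at the critical exponent $5/3$: naive $L^2$ orthogonality yields $\beta=0$, so one must extract real cancellation from the fact that wave packets tangent to a common circle but pointing in distinct directions interfere destructively on small balls, exploiting the curvature of the circle of tangency. A secondary difficulty is that the linearization $x\mapsto t(x)$ changes which circle a given wave packet is tangent to, so the refined $L^2$ estimate must be robust under the $x$-dependent choice of $t$; reconciling this with the required orthogonality (rather than losing it to the $t$-averaging) is the technical crux.
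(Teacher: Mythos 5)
Your plan correctly identifies the general architecture (linearize the supremum, interpolate an $L^{5/3}$ endpoint with the standard Sobolev-in-$t$ $L^2$ bound, run a broad--narrow analysis, and feed in some refined $L^2$ orthogonality plus Kakeya-type incidence counting), but the argument has a genuine gap exactly where the theorem's content lies: the refined $L^2$ inequality is asserted, not proved, and the version you assert is not the right statement. You posit a per-unit-ball gain $\|F\|_{L^2(B)}^2\lesssim M(B)^{-\beta}\sum_T\|\psi_T\|_{L^2(B)}^2$ coming from ``destructive interference'' of wave packets with distinct directions on a small ball. No such gain over Plancherel is available on a single unit ball in general (take all wave packets in phase at the center of $B$); in the paper the gain is not pointwise-local at all. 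It is a weighted, global estimate of the form \eqref{want-3}: for $F$ a union of $K$-balls that has been made \emph{regular across a full hierarchy of scales} $r_1<\cdots<r_n$ (via the greedy-algorithm Lemmas \ref{regular-lem-single} and \ref{regular-lem-multi}), the sum over $K$-balls of \emph{broad} norms $\|Sf\|^2_{L^2_{\Br_M}(B)}$ is bounded by $(|F|/R^2)^{1/2}\|Sf\|_2^2$, where the broad norm (Definition \ref{def-broad-norm}) encodes directional non-concentration and the factor $(|F|/R^2)^{1/2}$ is the geometric mean of two one-sided estimates \eqref{want-1}--\eqref{want-2} in the regularity factors $\ka_{r_k}$. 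Your sketch gives no route to any quantitative $\beta$, and you yourself flag this as the unresolved obstacle, so the endpoint $p=5/3$ is not established.

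A second structural gap: even granting a refined $L^2$ input, a single $L^2$ gain inserted into a broad--narrow induction does not produce the exponent $5/3$. In the paper the endpoint comes from combining \emph{three} different estimates for the pigeonholed model operator $\sum_j S_jf\Id_{F_j}$ --- the regularity-based $L^2$ bound (Proposition \ref{l2-prop1}), a second $L^2$ bound obtained from a local $TT^*$ estimate on $R\al$-balls together with a Wolff-axiom Kakeya incidence count for the $R\al\times R$ tubes (Propositions \ref{l2-prop2} and Lemma \ref{kakeya-type-lem}), and a dual $L^4$/$L^{4/3}$ estimate using C\'ordoba-type biorthogonality (Proposition \ref{l4/3-prop}) --- which are then balanced by H\"older precisely at $p=5/3$, after a level-set decomposition of $f$. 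Your outline contains no analogue of the second $L^2$ estimate or of the $L^{4/3}$ dual estimate, so there is no mechanism by which the numerology could close at $5/3$ rather than, say, $16/9$. Finally, the linearization issue you call the ``technical crux'' (the $x$-dependence of $t(x)$ versus orthogonality) is handled in the paper not by measurable selection but by exploiting local constancy in $t$ at scale $R^{-1}$ to discretize $t$ into $[R]$ values and pass to disjoint sets $F_j$ (Lemma \ref{linearization-lem}); this is a minor repair, but as written your proposal leaves it unresolved as well.
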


In the planar case, Tao’s conjecture would be settled if one proved the following conjecture.
\begin{conjecture}
\label{plane-conj}
The $L^p$-estimate (\ref{max-p-Sf}) holds for $p=3/2$.  
\end{conjecture}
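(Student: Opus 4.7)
The plan is to bootstrap the refined $L^2$ strategy behind Theorem \ref{Lp-max} by incorporating sharper geometric information about wave packets on the cone. First I would linearize: replace $S^*_R$ by $f\mapsto S_{t(x),R}f(x)$ for a measurable choice function $t:\mathbb{R}^2\to [1,2]$, and lift the problem to the truncated light cone $\Gamma=\{(\xi,|\xi|):1\leq |\xi|\leq 2\}\subset\mathbb{R}^3$. After decomposing $f$ into wave packets adapted to $R^{-1}$-thickenings of $R^{-1/2}$-arcs of the annulus $A_{t,R}$, the linearized operator becomes a sum over plates of dimensions roughly $R^{1/2}\times R$ in $\mathbb{R}^2$; the task reduces to bounding the $L^{3/2}$-norm of this sum evaluated along the graph $\{(x,t(x))\}$ by $C_\e R^\e\|f\|_{L^{3/2}}$.

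Next I would carry out a broad--narrow decomposition in the spirit of Bourgain--Guth at a transversality scale $K$. A point $x$ is broad if its dominant contributions come from angularly separated arcs, and narrow otherwise. For the broad part, the bilinear circular restriction estimate of Tao--Vargas--Vega, together with its cone extension in $\mathbb{R}^3$, gives a bound at $L^{3/2}$ with acceptable loss in $K$. The narrow part carries the real difficulty: I would invoke $\ell^2$-decoupling for the cone in $\mathbb{R}^3$ (Bourgain--Demeter) to reduce to an $L^2$ problem over $K^{-1}$-sectors and iterate to scales approaching $R^{-1/2}$, and at each scale feed the output into a strengthened version of the refined $L^2$ inequality developed in this paper in order to recover the shortfall that bare decoupling leaves at the exponent $3/2$.

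The main obstacle is that the refined $L^2$ inequality established in the present work is sharp at $p=5/3$: its right-hand side degenerates precisely at $p=3/2$, where generic wave-packet configurations saturate the bound. Overcoming this would require a strictly stronger refined inequality capable of quantifying simultaneous concentration of plates coming from a wide spread of radii $t$ in a common $R^{1/2}$-disk, a phenomenon absent from the fixed-scale Bochner--Riesz problem (for which Carleson--Sj\"olin already yields the $L^{3/2}$ bound) but intrinsic to the maximal formulation. I expect the missing ingredient to be a refined decoupling for the cone in the spirit of Guth--Maldague--Wang, coupled with a variational estimate controlling the oscillation of $t(x)$ within narrow caps so that the $t$-parameter does not spoil the orthogonality available on each $R^{-1/2}$-arc. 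With such an input the reduction scheme above should close; without it, the method of Theorem \ref{Lp-max} stops at $p=5/3$, which is why the statement is advanced only as a conjecture here.
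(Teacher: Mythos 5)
This statement is posed in the paper as an open conjecture; the paper supplies no proof of it, and neither does your proposal. What you have written is a research program, not an argument: the decisive step --- a ``strictly stronger refined inequality'' or a ``refined decoupling for the cone'' that would recover the shortfall at $p=3/2$ --- is exactly the ingredient you concede you do not have. To your credit you say so explicitly, and your diagnosis of \emph{why} the paper's method stops at $5/3$ is essentially right: the refined $L^2$ estimate of Proposition \ref{l2-refine-prop} yields the gain $(|F|/R^2)^{1/2}$, which after interpolation with the $L^{4/3}$ endpoint lands at $p=5/3$; reaching $3/2$ would require the full gain $|F|/R^2$ without any broad/multilinear structure, and the Knapp example rules that out in general. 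So the missing idea is not a technicality but the whole content of the conjecture.

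Two further cautions about the specific route you sketch. First, lifting to the cone in $\mathbb{R}^3$ and running $\ell^2$-decoupling plus Tao--Vargas--Vega bilinear restriction treats the problem in a mixed norm that is too weak: by the Kim--Seeger result cited in the introduction, any estimate that only sees the $L^p_xL^{p'}_t$ structure fails throughout the relevant triangle, so an argument that does not exploit the disjointness of the level sets $F_j$ (equivalently, the $L^\infty_t$ nature of the supremum) cannot reach $p<2$ at all, let alone $p=3/2$. Second, the paper itself warns that decoupling norms are tuned to constructive interference, which is not the obstruction in the lower range $p<2$; your plan to ``feed decoupling output into a strengthened refined $L^2$ inequality'' therefore inherits both an unproved lemma and a tool mismatched to the regime. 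As it stands, the proposal should be read as an (honest) discussion of the difficulty, not as a proof of the statement.
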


\medskip 

One of the main challenges posed by Conjecture \ref{mbr-conj} arises from the absence of orthogonality in the radial direction. Research by \cite{Kim-Seeger} demonstrates that substituting the $L^p_xL^\infty_t$ norm in \eqref{mbr-conj-esti} with a larger $L^p_xL^{p'}_t$ norm ($1/p+1/p'=1$) leads to the breakdown of Conjecture \ref{mbr-conj} within the entire isosceles right triangular area illustrated in Figure \ref{fig1}. This essentially means that an interpolation between the established outcomes at the endpoints $L^1_xL^\infty_t$ and $L^2_xL^2_t$ offers an almost sharp bound for the $L^p_xL^{p'}_t$ norm. To extend the results for the maximal problem beyond this interpolation line, it becomes imperative to establish a certain level of radial orthogonality. As we'll explain shortly, this is largely achieved implicitly through the consideration of specific weighted estimates.

%One major difficulty of Conjecture \ref{mbr-conj} is the lack of orthogonality along the 
%radial direction. In fact, \cite{Kim-Seeger} shows that if the $L^p_xL^\infty_t$ norm considered in %\eqref{max-p-Sf} is replaced by a larger $L^p_xL^{p'}_t$ norm ($1/p+1/p'=1$), then Conjecture %\ref{mbr-conj} fails inside the entire triangle region of Figure \ref{fig1}. In other words, %an interpolation between the known results at the endpoints $L^1_xL^\infty_t$ and $L^2_xL^2_t$ gives %an almost sharp bound for the $L^p_xL^{p'}_t$ norm. In order to obtain a result for the maximal %problem beyond the interpolation line, one must acquire certain radial orthogonality. As we explain %next, this is mostly realized implicitly by considering certain weighted estimates.

\begin{remark}
\rm
Surprisingly, Tao's example $f(x_1,x_2)=a(R^{-1/2}x_1)a(x_2)e^{ix_1}$, after some reductions, also serves as a sharp example of this $L^p_xL^{p'}_t$ problem. In Section \ref{ending-remark}, we will briefly explain how our approach in this paper can, to some extent, rule out Tao's example for Conjecture \ref{mbr-conj}.

\end{remark}

\subsection{A Weighted Estimation Approach}
By employing certain reductions (refer to Section \ref{linearization-section}), demonstrating Theorem \ref{Lp-max} can be narrowed down to primarily examining 
the $L^p$ estimates for the following model operator:
\begin{equation}
\label{model-operator-intro}
    \sum_{j\in[1,R]\cap\ZZ}S_{j} f\Id_{F_j}\,,
\end{equation}
where $F_j$'s are disjoint sets in an $R$-ball $B_R$ in the plane, each of them is a collection of unit balls, $\Id_{F_j}$ represents the indicator function of $F_j$, and $S_j$ is a multiplier operator given by $\wh{S_jf}(\xi)=a(R(t_j-|\xi|))\wh f(\xi)$ for some $t_j\in[1,2]$. 
Notice that the operator $S_j$ depends on both $j$ and $R$, yet we abstain from explicitly including $R$ in the notation $S_j$.\\

More precisely, it suffices to establish that 
\begin{equation}
    \big\|\sum_{j}S_{j} f\Id_{F_j}\big\|_p \leq C_\e R^\e\|f\|_p,
\end{equation}
for one endpoint with $p=5/3$ and any $\e>0$, as the $L^p$ estimates (\ref{max-p-Sf}) can then be derived through interpolation with the other known endpoint at $p=2$.
This observation leads us to explore the weighted $L^p$-norm $\|S_{j}f\|_{L^p(\Id_{F_j})}$ for any fixed $j$. Given that each $S_j$ behaves similarly to the spherical multiplier operator $S:=S_{1, R}$, we are interested in analyzing the $L^p$-norm restricted to a subset $F\subset B_R$, that is,   
\begin{equation}
\label{weighted-lp}
    \|Sf\Id_{F}\|_p.
\end{equation}

\smallskip

Decoupling inequalities have been utilized as a method in previous studies aiming towards \eqref{weighted-lp}. In \cite{Tao-MBR,Li-Wu-MBR}, \eqref{weighted-lp} is bounded by a decoupling norm, albeit without incorporating the information from the weight $\Id_{F}$. In a more recent paper \cite{GW}, the authors proposed to bound this weighted $L^p$-norm using a decoupling norm with a saving term expressed as
\begin{equation}\label{g}
\Big(\frac{|F|}{R^n}\Big)^\al
\end{equation}
for some $\al>0$. Such decoupling inequalities have contributed to advancements regarding Conjecture \ref{mbr-conj} in dimensions two and three. However, for the maximal Bochner-Riesz problem, the decoupling norm might not be the most suitable choice. One reason is that decoupling inequalities are powerful tools primarily aimed at analyzing constructive interference, which is often the main challenge in $L^p$-problems when $p>2$. However, in Conjecture \ref{mbr-conj}, we are required to consider $L^p$ functions in the lower range of $p$, where the constructive interference might not be the main obstacle. 

\medskip

Motivated by the work \cite{GW},  which established a gain of the form \eqref{g} in $L^p$ spaces, we aim to obtain a similar gain for the weighted estimate of $Sf$ in the $L^2$ space, the most natural space. 
Unfortunately, the Knapp example $f(x)=e^{itx_n}a(R^{-1/2}\bar x)a(R^{-1}x_n)$ with $(\bar x, x_n)\in\mathbb R^{n-1}\times \mathbb R$ demonstrates that we cannot generally expect a weighted inequality in the form
\begin{equation}
    \|Sf\Id_{F}\|_{L^2(\mathbb R^n)}\lesssim \Big(\frac{|F|}{R^n}\Big)^\al \|Sf\|_2.
\end{equation}
The first observation we made is that if the operator $S$ is replaced with its multilinear analog $``\text{Mul} (S)"$ (heuristically, $\text{Mul}(S) f$ can be viewed as the geometric average of $S_k f$ for $k = 1, \ldots, n$, where the Fourier transforms of the ${S_k f}$ are supported in thin neighborhoods of $\mathbb{S}^{n-1}$ that are quantitatively transverse    ), then the Bennett-Carbery-Tao multilinear restriction theorem \cite{BCT} indicates that
\begin{equation}
\label{multi-weighted-l2}
    \|\text{Mul}(S)f\Id_{F}\|_{L^2(\mathbb R^n)}\lessapprox\Big(\frac{|F|}{R^n}\Big)^{\frac{1}{2n}}\|Sf\|_2,
\end{equation}
which has the desired form for the estimates we are seeking.
We remark that the multilinear weighted $L^2$ estimate \eqref{multi-weighted-l2} is sharp, by testing $F=B_r$ for any $1\leq r \leq R$. 
Nevertheless, we believe these are essentially the only sharp examples, and our subsequent improvement is guided by this heuristic.

\medskip

The above observation is already quite useful.
In fact, it suggests that if $S$ could be substituted freely with its multilinear analog, then \eqref{multi-weighted-l2} would prove some desirable results for the maximal operator $S^\ast_R$. 
To see why, let us focus on the two-dimensional case, and take $n=2$ in \eqref{multi-weighted-l2}.
We first assume 
\begin{equation}\label{j}
    (\# j)|F_j|\sim R^2,
\end{equation}
which can be achieved by pigeonholing. 
Thus, on the one hand, we have from the weighted $L^2$ estimate (\ref{multi-weighted-l2}) and the assumption (\ref{j}), 
\begin{equation}
    \sum_j\big\|S_jf\Id_{F_j}\big\|_2^2\lesssim\sum_j\!\big\|\text{Mul}(S_j)f\Id_{F_j}\big\|_2^2 \lessapprox\Big(\frac{|F_j|}{R^2}\Big)^{\!\frac{1}{2}}\!\sum_j\!\big\|S_jf\big\|_2^2\lesssim \frac{1}{(\# j)^{\frac{1}{2}}}\big\|f\big\|_2^2.
\end{equation}
On the other hand, invoke the classical $L^{4/3}$ estimate for the Bochner-Riesz operator in \cite{Carleson-Sjolin} to obtain
\begin{align}
\sum_j\big\|S_jf\Id_{F_j}\big\|_{4/3}^{4/3}\lesssim\sum_j\big\|f\big\|_{4/3}^{4/3}\lesssim(\#  j)\big\|f\big\|_{4/3}^{4/3}.
\end{align}
An interpolation between the above two estimates gives
\begin{equation}
\label{j-final}
    \sum_j\big\|S_jf\Id_{F_j}\big\|_{p}^{p}\lessapprox\big\|f\big\|_p^p
\end{equation}
for $p=16/9$, which proves Conjecture \ref{mbr-conj} when $n=2$ and $16/9\leq p\leq 2$.

\medskip

To make use of the above observation, the first obstacle we encounter is how to transition from the linear operator to the bilinear one. 
Typically, the Bourgain-Guth broad-narrow method (see \cite{Bourgain-Guth-Oscillatory}) is a standard tool for this purpose.
However, unlike the Fourier restriction conjecture, Theorem \ref{Lp-max} concerns $L^p$ inequalities for $p<2$. 
Due to the lack of orthogonality when $p<2$, there is inefficiency in the standard broad-narrow argument. 
In particular, the ``narrow part" cannot be easily handled and summed by a rescaling argument.

Before a discussion on how we address this broad-narrow issue, let us introduce the notation of ``broadness".
This is similar to the concept introduced by Guth \cite{Guth} in the Fourier restriction theory.

\begin{definition}\label{def1.7}
We call an arc $\tau\subset\ZS^1$ a {\bf $\rho$-cap} if the Lebesgue measure of the cap $\tau$ is $\sim \rho$. 
We also define  
\begin{equation}\label{Def1.6}
    C_\tau:=\{\xi\in\ZR^2:\xi/|\xi|\in\tau\}
\end{equation}
as the conic region determined by $\tau$.
\end{definition}

For any cap $\tau$,  $S_\tau f$ denotes the smooth Fourier restriction of $Sf$ in $\tau$, that is, $\wh{S_\tau f} = a_\tau \wh {Sf}$. 
Here $a_\tau$ is a smooth bump function on the conic region $C_\tau$. 

\begin{definition}
\label{def-broad-norm}
Suppose $\Si=\{\si\}$ is a collection of finitely overlapping $(\log R)^{-1}$-caps. Let $M$ be a large number. For any function $f$ and any set $E\subset B_R$, define the {\bf broad norm} $\|Sf\|_{L^2_{\Br_M}(E)}$ as 
\begin{equation}
    \|Sf\|_{L^2_{\Br_M}(E)}:=\max_{\substack{\Si'\subset\Si,\\ \#\Si'=M}}\min_{\si\in\Si'}\{\|S_{\si}f\|_{L^2(E)}\}.
\end{equation}
\end{definition}

The quantity $\|Sf\|_{L^2_{\Br_M}(E)}$ is determined by the cap $\si$ such that $\|S_{\si}f\|_{L^2(E)}$ is the $M$-th largest element among the family $\{\|S_{\si}f\|_{L^2(E)}\}_{\si\in \Si}$. 
When $E$ is a unit ball, the definition coincides with the one introduced in  \cite{Guth}. 
Compared to the standard $L^2$ norm $\|Sf\|_{L^2(E)}$ or its bilinear analog $\|\text{Mul}(S)f\|_{L^2(E)}$, the broad norm $\|Sf\|_{L^2_{\Br_M}(E)}$ possesses a stronger directional non-concentration property. 

\medskip

To deal with the inefficiency mentioned above regarding the broard-narrow argument, we repeatedly apply the broad-narrow argument to the model function $\sum _j S_j f\Id_{F_j}$ until it is $\al$-broad (see Lemma \ref{broad-narrow}). 
As mentioned earlier, there is a loss when summing over all the $\alpha$-broad parts, each corresponding to a thin tube of dimensions $\alpha R \times R$ that arises naturally from the pseudo-local property of the multiplier operator localized near an arc of length $\alpha$
To pick up the loss, we use a local $L^2$-estimate (Lemma \ref{local-L2-0}) inside each $\al R$-ball, and use a Kakeya-type inequality (Lemma \ref{kakeya-type-lem}) to control the interaction between the $\al R$-balls and $\al R\times R$-tubes. 

In each of the $\al$-broad parts, we want to use the bilinear weighted $L^2$-estimate \eqref{multi-weighted-l2} and the strategy described from \eqref{j} to \eqref{j-final}. But this only gives us a weak conclusion that cannot even cover the earliest result by Tao \cite{Tao-MBR}. To refine this strategy, we consider the dual function of $\sum _j S_j f\Id_{F_j}$, $\sum _j S_j f_j$ with $\supp{f_j}\subset F_j$, in the dual space $L^4$. Then, among other things, we essentially bound the $L^4$-norm of  $\sum _j S_j f_j$ by the square function $\big(\sum_j\|S_j f_j\|_2^2\big)^{1/2}$. 
Since each $f_j$ still retains the information $\supp(f_j)\subset F_j$, we naturally want to make use of the weighted $L^2$-estimate \eqref{multi-weighted-l2} again. 
Unfortunately, after applying the duality, we lose  the essential bilinear structure of the function $S_j f_j$, which is critical in \eqref{multi-weighted-l2}.
This loss complicates the application of \eqref{multi-weighted-l2}. Without the bilinear structure, our approach can only reproduce the earliest result \cite{Tao-MBR}, and it cannot account for the more recent ones \cite{Li-Wu-MBR, GW, Kim}.  

The above discussion urges us to establish a refinement on \eqref{multi-weighted-l2}, which becomes a cornerstone of our proof of Theorem \ref{Lp-max}.  To elucidate our new refined $L^2$ results, we first need to introduce some definitions. We denote the $\rho$-neighborhood of a set $E$ by $N_\rho(E)$ and the Lebesgue measure of $E$ by $|E|$. Moreover, $A\gg B$ signifies that the number $A$ is much greater than the number $B$, and $\# S$ represents the number of elements in a (finite) set $S$. 

\begin{definition}
\label{kappa-regular}
Let $\kappa\in [r^{-1/2},1]$ for some given real number $r\geq 1$. A subset $E$ of an $r$-ball   $B_r$ in the plane is called {\bf{$\ka$ regular}} in $B_r$  if
\begin{enumerate}
    \item For any $r^{1/2}\times r$-tube $T$ in $\mathbb R^2$,
    \begin{equation}
        |N_{r^{1/2}}(E)\cap T|\lesssim \ka |T|.
    \end{equation}
    \item There exists a collection $\ZT_{\ka} $ of $r^{1/2}\times r$ tubes  with $\#\ZT_{\ka}\lesssim|N_{r^{1/2}}(E)|/(\ka r^{3/2})$ so that $E\subset \bigcup_{T\in\ZT_{\ka}}T$.
\end{enumerate}
\end{definition}

\begin{definition}
Given a large number $R\gg K\gg1$ with $K=R^{2^{-n}}$, let $r_1<\cdots<r_n$ be scales such that $r_n=R$, $r_k=r_{k+1}^{1/2}$ for all $k\in \{1, \cdots, n-1\}$. 
%$r_1=K^2$. 
Let $\ka_{r_k}\in [r_k^{-1/2}, 1]$ for $k=1,\ldots n$.
We say a set $E$ is {\bf{regular with factors $(\ka_{r_1},\ldots,\ka_{r_n})$}} if for every $k\in \{1, \cdots, n\}$ the following is true.
\begin{enumerate}
    \item $E$ is $\ka_{r_k}$ regular in an $r_k$-ball   $B_{r_k}$ whenever $B_{r_k}\cap E\not=\varnothing$. 
    \item $E$ admits a maximal covering by disjoint $r_k$-balls $Q_{r_k}$, for every such $r_k$-ball $Q$, the quantity $|N_{r_{k-1}}(E)\cap Q_{r_k}|$ is the same up to a constant multiple which may depend on $k$. 
\end{enumerate}

\end{definition}

Note that $\sum_{Q_{r_k}}|N_{r_{k-1}}(E)\cap Q_{r_k}| =   |N_{r_{k-1}}(E)|$ and $|N_{r_k}(E)|\sim |Q_{r_k}|\#\{Q_{r_k}: Q_{r_k}\cap N_{r_{k-1}}(E)\neq \emptyset\} $, where $\{Q_{r_k}\}$ is a maximal covering of $E$ by $r_k$-balls.
Thus, for each such $Q_k$, we have
\begin{equation}
\label{uniform-assump}
    \frac{|N_{r_{k-1}}(E)\cap Q_{r_k}|}{|Q_{r_k}|}\sim\frac{|N_{r_{k-1}}(E)|}{|N_{r_{k}}(E)|},
\end{equation}

\begin{proposition}
\label{l2-refine-prop}
Let $R\gg K\gg 1$. 
Let $F$ be a union of finite-overlapping $K$-balls $\{B\}$.
Suppose $F$ is regular with factors $(\ka_{r_1},\ldots, \ka_{r_n})$ and $M\geq 3n$. 
Then for any function $f\in L^2$,   we have
\begin{equation}
\label{want-1}
    \sum_{B\subset F}\big\|Sf\big\|_{L^2_{\Br_M}(B)}^2\lesssim(\log R)^{5n}\prod_{k=1}^n\ka_{r_k}^{-1}\Big(\frac{|F|}{R^2}\Big)\big\|Sf\big\|_2^2.
\end{equation}
and
\begin{equation}
\label{want-2}
    \big\|Sf\big\|_{L^2(F)}^2\lesssim\prod_{k=1}^n\ka_{r_k}\big\|Sf\big\|_{2}^2.
\end{equation}
In particular, the geometric mean of (\ref{want-1}) and (\ref{want-2}) yields that 
\begin{equation}
\label{want-3}
    \sum_{B\subset F}\big\|Sf\big\|_{L^2_{\Br_M}(B)}^2\lesssim(\log R)^{5n}\Big(\frac{|F|}{R^2}\Big)^{\frac{1}{2}}\big\|Sf\big\|_2^2.
\end{equation}
\end{proposition}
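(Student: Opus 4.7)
The plan is to prove (1) and (2) by a joint induction on the number of scales $n$, then to derive (3) from them via the arithmetic-geometric mean. The derivation of (3) is the easiest step: the broad norm is dominated by the full $L^2$ norm on each $K$-ball and the $K$-balls in $F$ have bounded overlap, so $\sum_{B\subset F}\|Sf\|_{L^2_{\Br_M}(B)}^2\lesssim \|Sf\|_{L^2(F)}^2$. Combining this inequality with (2) gives one upper bound for the left side of (3), while (1) supplies a second; multiplying the two upper bounds and taking square roots delivers (3) with a $(\log R)^{5n/2}$ loss, comfortably absorbed by the stated $(\log R)^{5n}$.

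For (2), I would induct on $n$. At the top scale $R=r_n$, decompose $Sf=\sum_T \psi_T$ into wave packets on $R^{1/2}\times R$ tubes (dual to the $R^{-1/2}$-caps of $S^1$). Each $\psi_T$ is essentially constant on $T$ with $\|\psi_T\|_\infty\lesssim \|\psi_T\|_2/|T|^{1/2}$, so the $\kappa_{r_n}$-regularity of $F$ gives the diagonal bound $\sum_T\|\psi_T\|_{L^2(F)}^2\lesssim \kappa_{r_n}\|Sf\|_2^2$, supplying the top-scale factor. Off-diagonal contributions are then handled by localizing to $R^{1/2}$-cubes $Q$, applying local $L^2$ orthogonality on $R^{-1/2}$-caps (Córdoba-type), and parabolically rescaling each $Q$ to reduce $\|Sf\|_{L^2(F\cap Q)}$ to a scale-$R^{1/2}$ Bochner-Riesz problem. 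In this rescaled setting $F\cap Q$ is regular at factors $(\kappa_{r_1},\ldots,\kappa_{r_{n-1}})$ by hypothesis, so the induction at scale $r_{n-1}=R^{1/2}$ supplies the remaining $\prod_{k<n}\kappa_{r_k}$. The uniform-density condition (\ref{uniform-assump}) ensures that aggregating contributions over all $Q$ inside $F$ loses at most a logarithmic factor per scale.

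For (1), the naive rearrangement bound $\|Sf\|_{L^2_{\Br_M}(B)}^2\lesssim M^{-1}\sum_{\sigma\in\Sigma}\|S_\sigma f\|_{L^2(B)}^2$ summed over $B$ is inadequate, producing the wrong power of $\kappa$. The point is that the broadness hypothesis on each $B$ forces $Sf$ to draw contributions from $\gtrsim M$ well-separated caps, so a tube-counting argument at scale $R$ shows that the relevant wave packets lie on at most $\#\ZT_{\kappa_{r_n}}\lesssim |N_{R^{1/2}}(F)|/(\kappa_{r_n}R^{3/2})$ tubes; averaging this count over the $K$-balls $B\subset F$ contributes the top-scale factor $(|F|/R^2)\kappa_{r_n}^{-1}$. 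Iterating scale-by-scale from $K=r_1$ up to $R=r_n$, with parabolic rescaling at each step and the induction hypothesis invoked within each $r_k$-cube, accumulates the full product $\prod_k\kappa_{r_k}^{-1}$, while the $(\log R)^{5n}$ loss absorbs the pigeonholing at the $n$ scales.

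The principal obstacle is the multi-scale bookkeeping: at each scale, the induction hypothesis must be applied to the parabolically rescaled problem with the correct regularity parameters, and the broad-norm directional non-concentration must be preserved through the rescalings so that the $\kappa_{r_k}^{-1}$ factors in (1) multiply cleanly. Controlling the cross-term (off-diagonal) interactions via local $L^2$ orthogonality at each scale without destroying the broadness gain is the main technical challenge, and the uniform-density assumption (\ref{uniform-assump}) is the structural input that permits the induction to close at every scale.
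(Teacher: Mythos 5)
Your treatment of \eqref{want-2} and of the geometric-mean step for \eqref{want-3} is essentially sound and close in spirit to the paper: the paper proves a single-scale inequality ($\|Sf\|^2_{L^2(F\cap B_r)}\lesssim\ka_r\|Sf\|^2_{L^2(w(B_r))}$, via decomposition into $r^{-1/2}$-caps whose pieces live on $r^{1/2}\times r$ tubes, plus the $\ka_r$-regularity) and then telescopes it over the scales $r_1<\cdots<r_n$ using the uniform-density condition \eqref{uniform-assump}; no parabolic rescaling is needed anywhere, since at every scale one works with the same operator $S$ and only the spatial tubes change scale (your rescaling step is harmless but superfluous, and Remark \ref{refine-l2-remark} would cover the resulting curve anyway).

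The genuine gap is in \eqref{want-1}, which is the heart of the proposition. You correctly observe that the naive rearrangement bound fails, and you correctly name the difficulty ("the broad-norm directional non-concentration must be preserved through the rescalings"), but you never supply the mechanism that resolves it. Two things are missing. First, the per-scale gain: for each covering tube $T\in\ZT_{\ka_r}$, broadness on the $r^{1/2}$-balls $B\subset F\cap T$ lets one select, after pigeonholing, caps $\si$ with $\ang(\si,\theta_T)\gtrsim(\log R)^{-1}$; wave packets of $S_\si f$ are then transversal to $T$, so each such $r^{1/2}\times r$ tube meets only $O(\log R)$ of the balls along $T$. This is what produces the factor $r^{-1/2}$ per tube, which combined with $\#\ZT_{\ka_r}\lesssim|F|/(\ka_r r^{3/2})$ gives $|F|/(\ka_r r^2)$; your phrase "averaging this count over the $K$-balls" does not yield a bound on the sum of squared broad norms without this transversality input. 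Second, and more structurally, the single-scale estimate must output a \emph{broad} norm at the next scale with the index decremented (the paper's Lemma \ref{refined-l2-single-lem} bounds the sum by $\|Sf\|^2_{L^2_{\Br_{M-2}}(w(B_r))}$, discarding only the $\leq 2$ caps nearly parallel to $\theta_T$); otherwise the iteration cannot continue, since broadness is only hypothesized on the smallest-scale balls. This is exactly why the hypothesis $M\geq 3n$ appears, and your proposal never engages it: you invoke "the induction hypothesis within each $r_k$-cube," but the induction hypothesis as you state it concerns the broad norm on $K$-balls, not on $r_k$-balls, so as written the multi-scale argument for \eqref{want-1} does not close.
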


\begin{remark}

\label{refine-l2-remark}

\rm

Although Proposition \ref{l2-refine-prop} is stated with $Sf$ being the smooth Fourier restriction of $f$ on the $R^{-1}$-neighborhood of the unit circle, it is also valid when the unit circle is replaced by a $C^2$ curve with nonzero curvature.

\end{remark}

Proposition \ref{l2-refine-prop} enables us to derive the following refinement of \eqref{multi-weighted-l2} in $\mathbb{R}^2$: for any $L^2(\mathbb{R}^2)$ functions $f_1, f_2$, we have
\begin{equation}
\Big( \sum_{B\subset F}\big\|Sf_1\big\|_{L^2_{\Br_M}(B)}^2\Big)^{1/2}\|Sf_2\Id_{F}\|_2\lessapprox\Big(\frac{|F|}{R^2}\Big)^{\frac{1}{2}}\|f_1\|_2\|f_2\|_2.
\end{equation}
This improvement over \eqref{multi-weighted-l2} is notable since it eliminates the requirement for a bilinear structure in the second term $\|Sf_2\Id_{F}\|_2$ on the left side. 
Consequently, we can eventually derive an $L^{5/3}$ estimate in Theorem \ref{Lp-max}. 
Let us give a quick explanation of how this is achieved below.

\medskip

\subsection{Sketch of the proof of Theorem \ref{Lp-max}}
The proof is technically involved and intricate, so we begin by outlining the main ideas, with an emphasis on how Proposition \ref{l2-refine-prop} leads to Theorem \ref{Lp-max}.

\smallskip

By a broad-narrow argument, we can identify a factor $\al\in[R^{-1/2},1]$ such that the model operator appearing in \eqref{model-operator-intro} is ``$\al$-broad" (see Lemma \ref{broad-narrow}).
In other words, for all $x\in B_R$, there exists only one arc $\tau$ of length $\al$ so that, up to negligible losses,
\begin{equation}
\label{model-operator-intro-2}
    S^\ast_R f\approx\sum_{j}S_{j} f\Id_{F_j}(x)\approx \sum_{j}S_{\tau,j} f\Id_{F_j}(x).
\end{equation}
Here, recalling Definition \ref{def1.7}, $\wh{S_{\tau,j} f} := a_\tau \wh {S_jf}$, where $a_\tau$ is a smooth bump function on the conic region $C_\tau$.
Let $\ZT_\tau$ be a family of finite-overlapping rectangles of dimensions $\al R\times R$ with direction $\tau$.
Since the multiplier of $S_{\tau,j}$ decays rapidly outside an $\al R\times R$-rectangle with direction $\tau$ centered at the origin, the functions $\{\sum_{j}S_{\tau,j} f\Id_{F_j\cap T}:T\in\ZT_\tau\}$ are essentially independent.
After several steps of uniformization (see Proposition \ref{reduction-lem}), we may assume that for all $T\in\ZT_\tau$ and all $j$, either $T\cap F_j=\varnothing$, or $T\cap F_j$ is regular with factors $(\ka_{r_1},\ldots,\ka_{r_n})$ and satisfies 
\begin{equation}
    |T\cap F_j|\sim \la.
\end{equation}
Thus, since each $x$ is $\alpha$-broad, we may apply a rescaled version of Proposition \ref{l2-refine-prop} to each $S_{\tau, j}f\Id_{T\cap F_j}$.
Writing $\ka=\ka_{r_1}\cdots\ka_{r_n}$, we obtain
\begin{equation}
\label{estimate-1}
    \|S_{\tau, j}f\Id_{T\cap F_j}\|_2^2\lesssim \ka^{-1}\Big(\frac{\la}{R^2\al}\Big)\|S_{\tau,j}f\|_2^2.
\end{equation}
Summing up all $\tau$ and $j$ in \eqref{model-operator-intro-2} to obtain the first estimate (see \eqref{5-16})
\begin{equation}
    \|S^\ast_R f\|_2^2\lesssim \kappa^{-1}\Big(\frac{\la}{R^2\al}\Big)\|f\|_2^2.
\end{equation}

\smallskip

Our second estimate handles the inefficiency of the broad-narrow analysis.
By further refinement, we may assume that $f$ is concentrated in a collection of $\al R$-balls $\cb$, and each $B\in\cb$ intersects $\sim \be$ many $\al R\times R$-rectangles from $\cup_\tau\ZT_\tau$.
Moreover, we may assume that for each $T\in\cup_\tau\ZT_\tau$, there are $\sim \nu$ many operators $\{S_{\tau,j}\}_j$ that make contribution to $S_{\tau, j}f\Id_{T\cap F_j}$.
Since $F_j$ are disjoint, we have $\lambda\nu\leq R^2\alpha$.
Then, with the help of the Kakeya maximal inequality (see Lemma \ref{kakeya-type-lem}) and a local $L^2$ estimate (Lemma \ref{local-L2}), we obtain our second estimate (see \eqref{second-L2})
\begin{equation}
\label{estimate-2}
     \|S^\ast_R f\|_2^2\lessapprox\Big(\frac{R^2\al}{\la\nu}\Big)\be^{-1}\big\|f\big\|_2^2.
\end{equation}
We refer to Proposition \ref{l2-prop2} for details.

\smallskip

The final estimate is given for the dual form of the model operator in the dual $L^4$-space. 
This is the most technical part of our proof (see Proposition \ref{l4/3-prop}).
To keep our introduction concise, we only present the final estimate and give a few remarks about it.
After duality, our third and final estimate reads (see \eqref{l4/3}) 
\begin{equation}
\label{estimate-3}
    \big\|S^\ast_R f\big\|_{4/3}^{4/3}\lessapprox \be^{1/3}\nu^{2/3}\ka^{2/3}\big\|f\big\|_{4/3}^{4/3}.
\end{equation}
The loss $\be^{1/3}$ comes from the inefficiency of the broad-narrow analysis.
It is sharp, by testing a function $f$ that is concentrated in an $R^{1/2}$-ball inside $B_R$, matching Tao's example.
The loss of $\nu^{2/3}$ comes from the lack of orthogonality in the radial direction.
Finally, the gain $\ka^{2/3}$ follows from \eqref{want-2} in Proposition \ref{l2-refine-prop}.
Here we cannot use the stronger estimate \eqref{want-1} since the property of $\al$-broadness is not preserved under duality.

\smallskip

Applying the estimate $\lambda \nu \leq R^2 \alpha$ and interpolating the estimates \eqref{estimate-1}, \eqref{estimate-2}, and \eqref{estimate-3} with weights $1/3$, $1/6$, and $1/2$, respectively, we obtain Theorem \ref{Lp-max}.
\medskip

\subsection{Further discussion on Proposition \ref{l2-refine-prop}.}

The method developed to prove Proposition \ref{l2-refine-prop} may be of independent interest. 
%Extending it to higher-dimensional cases requires more technical treatment due to the complexity involved in transitioning from the linear operator to the multilinear analog. %Nonetheless, we hold a reasonable optimism that affirmative results can be achieved in higher-dimensional spaces.
Additionally, we anticipate further applications of the refined $L^2$ estimate and its analogs. For instance, at a cost of $R^\e$, we can reprove the reverse square function estimate in $\mathbb{R}^2$ by solely working in the $L^2$ space and using $L^2$ orthogonality. The transition from $L^4$ space to $L^2$ space is facilitated by H\"older's inequality and the examination of level sets. This approach circumvents the reliance on the algebraic property of the number 4, namely, $4=2\times 2$, which was pivotal in its original proof in \cite{Cordoba-Kakeya}. Below, we will outline a sketch of its proof using refined $L^2$ estimates. 
It is our aspiration that this perspective will shed light on $L^p$-problems in harmonic analysis.

%The transition between $L^4$ space and $L^2$ space is accomplished by H\"older's inequality and the consideration of level sets. In this way, we avoid using the algebraic %property of the number $4$: $4=2\times 2$, which is crucial in its original proof in \cite{Cordoba-Kakeya}. A sketch of its proof using the refined $L^2$ estimates will be %given below. We hope this point of view will shed light on $L^p$-problems in harmonic analysis.

\begin{proof}[Sketch for the reverse square function estimate]

By employing the standard broad-narrow argument, we identify a dyadic number $\al\in[R^{-1/2}, 1]$ along with the corresponding $\alpha$-caps ${\tau}$ such that
\begin{equation}\label{1-24}
    \|Sf\|_4^4\lessapprox\sum_{\tau}\|S_\tau f\|_{L^4_{\Br(B_R)}}^4.
\end{equation}
Choosing $K=(\log R)^C$, for each $\tau$, via pigeonholing, we find a dyadic number $\lambda$ and a set $E_\lambda$ of unit balls such that for any unit ball
 $B\subset E_\la$, we have $\|S_\tau f\|_{L^4_{\Br(B)}}^4\sim \la$. Furthermore,
\begin{equation}
    \|S_\tau f\|_{L^4_{\Br(B_R)}}^4\lesssim(\log R)^C\|S_\tau f\|_{L^4_{\Br(E_\la)}}^4.
\end{equation}
Dividing $B_R$ into $R\alpha\times R$-rectangles ${T}$ with direction $\tau$, we have
\begin{equation}
    \|S_\tau f\|_{L^4_{\Br(E_\la)}}^4=\sum_T\|S_\tau f\|_{L^4_{\Br(E_\la\cap T)}}^4.
\end{equation}
Inside each $E_\la\cap T$, by reverse H\"older's inequality, we get
\begin{equation}
    \|S_\tau f\|_{L^4_{\Br(E_\la\cap T)}}^4\lesssim\frac{1}{|E_\la\cap T|}\|S_\tau f\|_{L^2_{\Br(E_\la\cap T)}}^4.
\end{equation}
Applying a rescaled version of \eqref{want-3} via parabolic rescaling, at a cost of $K^{O(1)} \lessapprox 1$, we observe that, after disregarding negligible Schwartz tails,
\begin{align}
    \|S_\tau f\|_{L^2_{\Br(E_\la\cap T)}}^4\lessapprox\frac{|E_\la\cap T|}{|T|}\|S_\tau f\|_{L^2(T)}^4\lessapprox\frac{|E_\la\cap T|}{|T|}\big(\int_T\sum_{\theta\subset\tau}|S_\theta f|^2\big)^2
\end{align}
by $L^2$-orthogonality inside $T$. 
Utilizing Cauchy-Schwarz's inequality and combining the above two estimates yield
\begin{equation}\label{1-29}
    \|S_\tau f\|_{L^4_{\Br(E_\la\cap T)}}^4\lessapprox\int_T\big(\sum_{\theta\subset\tau}|S_\theta f|^2\big)^2.
\end{equation}
Consequently, from (\ref{1-24}) and (\ref{1-29}), we obtain  
\begin{equation}
    \|Sf\|_4^4\lesssim\sum_{\tau}\|S_\tau f\|_{L^4_{\Br(B_R)}}^4\lessapprox\sum_\tau\int\big(\sum_{\theta\subset\tau}|S_\theta f|^2\big)^2\lessapprox\int\big(\sum_{\theta}|S_\theta f|^2\big)^2,
\end{equation}
which precisely corresponds to the reverse square function estimate.
\end{proof}

\bigskip

\noindent{\bf Notation: } 
We denote $A\lesssim B$ to signify that $A\leq CB$ for some constant $C$, and $A\lessapprox B$ to indicate that $A\leq C_\e R^\e B$ for any $\e>0$. 
We define $A\sim B$ if $A\lesssim B$ and $B\lesssim A$. 
In the paper, we consider a large number $R\gg1$. 
Another large number, denoted as $K$, satisfies $K\sim \exp{(\log R/\log\log R)}\lessapprox1$. 
The value of $K$ remains fixed starting from equation \eqref{before-K}.
$\theta$ (or $\theta'$) always represents an $R^{-1/2}$-cap.
$B_r$ stands for a ball in the plane with radius (or side length) $r>0$. 
    %\item[$\bullet$] For an arc $\theta\subset\ZS^2$, we define $e(\theta)$ to be the direction given by the center of $\theta$.

\bigskip

\noindent
{\bf Acknowledgment.} Both authors are partially supported by the Simons Foundation and NSF2350101. 
The second author is partially supported by NSF2453583 and thanks Shengwen Gan for helpful discussions regarding the ending remark. 
The author also expresses gratitude to Ciprian Demeter for insightful comments that improved the presentation.

\bigskip

\section{Creating large regular subsets and the refined $L^2$ estimates}
\label{refined-l2-section}

In this section, our attention is directed towards investigating the weighted $L^2$ estimate $\|Sf\|_{L^2(E)}$, where $E$ represents a subset of an $R$-ball, denoted as $B_R$. 
We aim to prove Proposition \ref{l2-refine-prop} by constructing large regular subsets via a greedy algorithm.

\bigskip

\subsection{Algorithms for discovering extensive regular subsets}

Let us first state a lemma on finding significant regular subsets in a single scale.  
We will use this lemma repeatedly to obtain large regular subsets in multi-scales in the next lemma. 

\begin{lemma}
\label{regular-lem-single}
Suppose $E\subset B_r$ is a union of disjoint $r^{1/2}$-balls. 
Then there exists $\ka$ and a union of $r^{1/2}$-balls  $E_\ka\subset E$ with $|E_\ka|\gtrsim (\log r)^{-1}|E|$ and such that $E_\ka$ is $\ka$ regular in $B_r$.
\end{lemma}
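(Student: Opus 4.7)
The plan is to extract $E_\ka$ by a single dyadic pigeonhole on a directional tube-density associated to each $r^{1/2}$-ball of $E$, and then to read off the two conditions of Definition~\ref{kappa-regular} from the construction.

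Fix a maximal $r^{-1/2}$-separated set of directions on $\ZS^1$ (of cardinality $\sim r^{1/2}$), and for each such $\omega$ a canonical tiling $\mathcal T_\omega$ of $B_r$ by $r^{1/2}\times r$-tubes; write $T_\omega(B)$ for the tube of $\mathcal T_\omega$ containing a given $r^{1/2}$-ball $B$. For every $r^{1/2}$-ball $B\subset E$ define
\[
\mu(B):=\max_{\omega}\frac{|T_\omega(B)\cap E|}{|T_\omega(B)|}\in[r^{-1/2},1].
\]
Since $\mu$ takes only $O(\log R)$ dyadic values, pigeonholing produces a dyadic $\ka\in[r^{-1/2},1]$ with $E_\ka:=\bigcup\{B\subset E:\mu(B)\sim\ka\}$ satisfying $|E_\ka|\gtrsim (\log R)^{-1}|E|$.

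The density condition is immediate from the construction. For an arbitrary $r^{1/2}\times r$-tube $T$, any ball $B\subset E_\ka$ contributing to $N_{r^{1/2}}(E_\ka)\cap T$ lies in a bounded dilate $T^\ast$ of $T$. Choosing $\omega^\ast$ to be the discretized direction closest to $T$'s direction, $T^\ast$ is covered by $O(1)$ tubes of $\mathcal T_{\omega^\ast}$ around $B$, and for any such tube $T_{\omega^\ast}(B)$ the defining relation $\mu(B)\sim\ka$ gives $|T_{\omega^\ast}(B)\cap E|\lesssim\ka r^{3/2}$. Since $E_\ka\subset E$ and there are only $O(1)$ relevant tubes, these contributions sum to $|N_{r^{1/2}}(E_\ka)\cap T|\lesssim\ka|T|$.

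For the tube-cover condition, assign to each $B\subset E_\ka$ a heavy tube $T_B:=T_{\omega_B}(B)$ where $\omega_B$ attains the maximum defining $\mu(B)$, and let $\mathcal T_\ka$ be the set of distinct such $T_B$'s. By construction $\mathcal T_\ka$ covers $E_\ka$ and each $T_B$ satisfies $|T_B\cap E|\sim\ka r^{3/2}$. The counting goes through the map $\phi:B\mapsto T_B$: each fibre $\phi^{-1}(T)$ sits inside $T\cap E_\ka$, and the density condition already gives $\#\phi^{-1}(T)\lesssim\ka r^{1/2}$. To reach the upper bound $\#\mathcal T_\ka\lesssim|N_{r^{1/2}}(E_\ka)|/(\ka r^{3/2})$ one needs the fibres to be of size \emph{comparable} to this bound --- equivalently, each chosen heavy tube $T_B$ should satisfy $|T_B\cap E_\ka|\sim\ka r^{3/2}$, not merely $|T_B\cap E|\sim\ka r^{3/2}$.

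This last comparability is the main obstacle: a priori $|T_B\cap E_\ka|$ could be much smaller than $|T_B\cap E|$ if many balls of $T_B\cap E$ escape into denser dyadic shells $E_{\ka'}$ with $\ka'>\ka$ (a ball $B'\in T_B\cap E$ automatically satisfies $\mu(B')\geq|T_B\cap E|/|T_B|\sim\ka$, and escapes $E_\ka$ precisely when it witnesses a strictly denser tube). I plan to handle this by choosing the assignment $B\mapsto T_B$ greedily so as to maximize $|T_B\cap E_\ka|$, and if necessary iterating the pigeonhole at the denser scales $\ka'>\ka$ to show that the total escapee contribution to each $T_B$ is only a small fraction of $|T_B\cap E|$. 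This cascading step is the delicate part of the argument, and care must be taken so that no additional $\log R$ factor is accumulated beyond the one already spent in the initial pigeonhole.
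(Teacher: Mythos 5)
Your pigeonhole on $\mu(B)$ does give the measure bound and condition (1) of Definition \ref{kappa-regular}: since $\mu(B)\sim\ka$ is a maximum over directions, every tile through a ball of $E_\ka$ has $E$-density $\lesssim\ka$, and an arbitrary $r^{1/2}\times r$-tube is contained in $O(1)$ tiles of the nearest discretized direction, so $|N_{r^{1/2}}(E_\ka)\cap T|\lesssim\ka|T|$ follows. But the tube-count condition (2) is a genuine gap, and it is exactly the point you defer. The obstruction you identify is real, not just technical: $\mu(B)$ measures density with respect to all of $E$, so a ball can be placed in the shell $E_\ka$ because of mass of $E$ that does \emph{not} lie in $E_\ka$ (e.g.\ a full tube $D$ of density $1$ together with satellite balls at lateral distance $\sim\ka^{-1}r^{1/2}$, each of whose heavy tubes tilts into $D$; the heavy tubes are pairwise distinct, each contains essentially only its own ball of the shell, and the shell is then not $\ka$-regular for your pigeonholed $\ka$, though it may be $\ka'$-regular for a much smaller intrinsic $\ka'$). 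So the factor produced by a static, one-shot pigeonhole need not be the correct regularity factor of the set it produces, and the proposed fix (greedy reassignment plus cascading through denser shells) is precisely the unproved part; as sketched there is no mechanism forcing each selected tube to be $\sim\ka r^{3/2}$-full of $E_\ka$ itself rather than of $E$.

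The paper resolves exactly this by making the selection dynamic: it runs a greedy removal algorithm in which, at each step, one takes the tube $T$ maximizing $|F\cap T|$ for the \emph{current} remaining set $F$, records the dyadic value $\ka_F\sim\max_T|F\cap T|/|T|$, removes the union $\bQ_T$ of all $r^{1/2}$-balls of $F$ meeting $T$, and stops when half of $E$ has been removed; grouping the removed pieces by the dyadic value of $\ka_F$ and pigeonholing over the $O(\log R)$ values gives $E_\ka=\bigsqcup_{T\in\ZT_\ka}\bQ_T$. Because each $\bQ_T$ is extracted together with its witnessing tube and has $|\bQ_T|\sim\ka r^{3/2}$ \emph{as a subset of the extracted set}, the count $\#\ZT_\ka\lesssim|E_\ka|/(\ka r^{3/2})$ is automatic, and condition (1) follows since $E_\ka$ sits inside a set $F$ whose maximal tube density is $\sim\ka$. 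In short: the density-upper-bound half of your argument matches the paper, but the covering half needs the intrinsic, remove-as-you-go bookkeeping (or an argument of comparable strength) rather than a single static pigeonhole; without it the lemma is not proved.
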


\begin{proof}
We construct $E_\kappa$ via a greedy algorithm.

\smallskip 

\noindent
\textbf{Step 1: Greedy decomposition.}
Set $G_0 := \varnothing$ and $F_0 := E$. At stage $m$, assuming $F_m \neq \varnothing$, define
\begin{equation}
    \kappa_m^\ast := \sup_T \frac{|F_m \cap T|}{|T|},
\end{equation}
where the supremum is taken over all $r^{1/2} \times r$ tubes $T$.

Choose a dyadic number $\kappa_m \in [r^{-1/2},1]$ such that $\kappa_m \sim \kappa_m^\ast$, and select a tube $T_m$ satisfying
\begin{equation}
    |F_m \cap T_m| \sim \kappa_m |T_m|.
\end{equation}
Let $Q_m$ be the union of all $r^{1/2}$-balls $Q \subset F_m$ that intersect $T_m$. 
Then $Q_m \subset 2T_m$, where $CT$ is the rectangle concentric with $T$ with sides scaled by $C$, and
\begin{equation}
    |Q_m| \sim \kappa_m |T_m|.
\end{equation}
Indeed, the lower bound follows from $F_m \cap T_m \subset Q_m$, while the upper bound follows since $2 T_m$ can be covered by $O(1)$ translates of $T_m$, and $T_m$ maximizes $|F_m \cap T|$.

Now, we define
\begin{equation}
    G_{m+1} := G_m \sqcup Q_m, \qquad F_{m+1} := F_m \setminus Q_m.
\end{equation}
Continue this process until $|F_m| \le \tfrac12 |G_m|$. Since each step removes at least one $r^{1/2}$-ball, the process terminates after finitely many steps. 
At termination we obtain a set $G$ satisfying $|G| \ge \tfrac12 |E|$.

\medskip

\noindent
\textbf{Step 2: Dyadic pigeonholing.}
For each dyadic $\kappa \in [r^{-1/2},1]$, let $\mathbb T_\kappa$ be the collection of tubes $T_m$ selected at stages where $\kappa_m = \kappa$, and define
\begin{equation}
    E_\kappa := \bigsqcup_{m:\,\kappa_m = \kappa} Q_m.
\end{equation}
Then the sets $E_\kappa$ are disjoint and their union is $G$. 
Since there are $O(\log r)$ possible dyadic values of $\kappa$, there exists $\kappa$ such that
\begin{equation}
    |E_\kappa| \gtrsim (\log r)^{-1} |E|.
\end{equation}

\medskip

\noindent
\textbf{Step 3: Verification of regularity.}

\smallskip

\noindent
\emph{(1) Tube density bound.}
Let $m_0$ be the first stage such that $\kappa_{m_0} = \kappa$. Since the residual sets are nested, we have
\begin{equation}
    E_\kappa \subset F_{m_0}.
\end{equation}
Because $E_\kappa$ is a union of $r^{1/2}$-balls, the set $N_{r^{1/2}}(E_\kappa) \cap T$ is contained in the intersection of $F_{m_0}$ with $O(1)$ translates of $T$.
By the definition of $\kappa_{m_0}$, each such translate contains at most $O(\kappa |T|)$ mass of $F_{m_0}$. Hence, for every $r^{1/2} \times r$ tube $T$
\begin{equation}
    |N_{r^{1/2}}(E_\kappa) \cap T| \lesssim \kappa |T|.
\end{equation}

\smallskip

\noindent
\emph{(2) Covering by tubes.}
Since the sets $Q_m$ are disjoint and each satisfies $|Q_m| \sim \kappa |T_m| \sim \kappa r^{3/2}$, we obtain
\begin{equation}
    \# \mathbb T_\kappa \lesssim \frac{|E_\kappa|}{\kappa r^{3/2}}.
\end{equation}
Moreover, each $Q_m \subset 2 T_m$, so after enlarging each $T_m$ by a factor of 2, we obtain a family of tubes covering $E_\kappa$.

Finally, since $E_\kappa$ is a union of $r^{1/2}$-balls, we have $|N_{r^{1/2}}(E_\kappa)| \sim |E_\kappa|$.
Therefore,
\begin{equation}
    \# \mathbb T_\kappa \lesssim \frac{|N_{r^{1/2}}(E_\kappa)|}{\kappa r^{3/2}}.
\end{equation}
This verifies both conditions of Definition \ref{kappa-regular}, so $E_\kappa$ is $\kappa$-regular in $B_r$, completing the proof.
\qedhere

\end{proof}

\begin{lemma}
\label{regular-lem-multi}
Let $R\gg K\gg1$ with $K=R^{2^{-n}}$. Let $r_1<\cdots<r_n$ be scales such that $r_n=R$, $r_k=r_{k+1}^{1/2}$ for all $k\in \{1, \cdots, n-1\}$.  
Suppose that  $E\subset B_R$ is a union of finitely overlapping $K$-balls. 
Then there exists a collection of factors  $(\ka_{r_1},\ldots,\ka_{r_n})$ with each $\ka_{r_k}\in [r_k^{-1/2}, 1]$ and a union of $K$-balls   $E'\subset E$ with $|E'|\gtrsim (\log R)^{-3n}|E|$ such that $E'$ is regular with factors $(\ka_{r_1},\ldots,\ka_{r_n})$.
\end{lemma}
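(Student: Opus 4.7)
The plan is to iterate Lemma \ref{regular-lem-single} one scale at a time, from the smallest scale $r_1$ up through $r_n$, interleaved with two pigeonholes per scale. Set $E^{(0)} := E$ and assume inductively that a union of $K$-cubes $E^{(k-1)} \subseteq E$ has been constructed that is regular with factors $(\ka_{r_1},\ldots,\ka_{r_{k-1}})$ and satisfies $|E^{(k-1)}| \gtrsim (\log R)^{-3(k-1)}|E|$. I will produce $E^{(k)} \subseteq E^{(k-1)}$ regular at the additional scale $r_k$ while losing another $(\log R)^{-3}$ factor, so the final set $E' := E^{(n)}$ has $|E'| \gtrsim (\log R)^{-3n}|E|$.

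The per-scale step proceeds as follows. Fix a dyadic partition of $\ZR^2$ into $r_k$-cubes. For each $r_k$-cube $Q$ meeting $E^{(k-1)}$, view $N_{r_{k-1}}(E^{(k-1)}) \cap Q$ as a union of $r_{k-1}$-cubes inside $Q$, and apply Lemma \ref{regular-lem-single} with $r = r_k$ (so $r^{1/2}=r_{k-1}$) to obtain a dyadic factor $\ka_Q \in [r_k^{-1/2},1]$ and a $\ka_Q$-regular subset $F_Q \subseteq N_{r_{k-1}}(E^{(k-1)}) \cap Q$ with $|F_Q| \gtrsim (\log R)^{-1}|N_{r_{k-1}}(E^{(k-1)}) \cap Q|$. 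A first pigeonhole over the $O(\log R)$ dyadic values of $\ka_Q$ fixes a common factor $\ka_{r_k}$; a second pigeonhole over the $O(\log R)$ dyadic values of $|F_Q|$ restricts to a subcollection $\mathcal Q^{(k)}$ on which both $\ka_Q$ and $|F_Q|$ are constant. Define
\[
E^{(k)} := E^{(k-1)} \cap \bigcup_{Q \in \mathcal Q^{(k)}} F_Q,
\]
again a union of $K$-cubes, of measure $\gtrsim (\log R)^{-3}|E^{(k-1)}|$.

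Regularity at the newest scale is immediate: inside each $Q \in \mathcal Q^{(k)}$, one has $N_{r_{k-1}}(E^{(k)}) \cap Q = F_Q$ (every $r_{k-1}$-cube in $F_Q$ still contains $K$-cubes of $E^{(k-1)}$), so condition (1) of Definition \ref{kappa-regular} is inherited from the $\ka_{r_k}$-regularity of $F_Q$; the tube coverings of the individual $F_Q$ assemble into a global covering of the correct cardinality $\lesssim |N_{r_{k-1}}(E^{(k)})|/(\ka_{r_k}r_k^{3/2})$; and condition (2) is exactly what the $|F_Q|$-pigeonhole ensures. For smaller scales $j < k$, all discarded $K$-cubes sit inside wholly removed $r_{k-1}$-cubes, so any $r_j$-cube $Q_{r_j}$ meeting $E^{(k)}$ satisfies $E^{(k)} \cap Q_{r_j} = E^{(k-1)} \cap Q_{r_j}$, which transfers the tube-density inequality and the uniformity condition (2) verbatim. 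For the tube-count half of condition (1) at scale $j$, one uses that the scale-$j$ covering of $E^{(k-1)}$ was itself built cube-by-cube at scale $r_{k-1}$, so by the already-established uniformity at scale $r_{k-1}$ the number of tubes per $r_{k-1}$-cube is constant; restricting to the kept $r_{k-1}$-cubes shrinks the tube count in the same proportion as $|N_{r_{j-1}}(E^{(k)})|$, keeping the ratio correct.

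The principal technical point—and the reason to sweep scales from small to large rather than the opposite—is exactly the final sentence above. If one worked from large to small, then at the time of deleting small-scale cubes the large-scale tube coverings already in force could not be refreshed cheaply, causing the bound $\#\mathbb T_{\ka_{r_j}} \lesssim |N_{r_{j-1}}(E^{(k)})|/(\ka_{r_j}r_j^{3/2})$ to degrade by polylogarithmic factors and breaking the definition of $\ka$-regularity. The small-to-large order sidesteps this, and the only losses are the three $(\log R)^{-1}$ factors per scale coming from Lemma \ref{regular-lem-single} and the two pigeonholes, producing the claimed $(\log R)^{-3n}$ total loss.
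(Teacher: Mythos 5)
Your proposal is essentially the paper's own argument: a bottom-up induction over the scales $r_1,\ldots,r_n$, applying Lemma \ref{regular-lem-single} inside each dyadic $r_k$-cube and then dyadically pigeonholing both the factor $\ka_Q$ and the per-cube retained measure so that \eqref{uniform-assump} holds at the new scale, with a $(\log R)^{-3}$ loss per scale and regularity at the smaller scales inherited because deletions occur at whole-$r_{k-1}$-cube granularity. The one point you leave implicit is the passage from retaining a $(\log R)^{-1}$ fraction of $|N_{r_{k-1}}(E^{(k-1)})\cap Q|$ to retaining a comparable fraction of $|E^{(k-1)}\cap Q|$ (your step applies Lemma \ref{regular-lem-single} to the neighborhood, whose cubes could a priori carry very unequal amounts of $E^{(k-1)}$); this is fine because the uniformity conditions \eqref{uniform-assump} at scales $r_1,\ldots,r_{k-1}$ in your inductive hypothesis force $|E^{(k-1)}\cap Q_{r_{k-1}}|$ to be essentially constant over the $r_{k-1}$-cubes meeting $E^{(k-1)}$, whereas the paper sidesteps the issue by applying Lemma \ref{regular-lem-single} to $E_{k-1}\cap Q$ itself and measuring the retained fraction directly in $E_{k-1}$-measure.
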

\begin{proof}
We create the set $E'$ inductively from the smallest scale $r_1=K^2$ to the largest one $r_n=R$ by introducing a sequence of sets $E_0,\ldots, E_n$.
Let $E_0=E$ and $r_0=K$. The algorithm starts at scale $r_1$, where Lemma \ref{regular-lem-single} starts to kick in.  
Let $k\in\{1,\cdots, n\}$. 
At the beginning of the $k$-th step of the algorithm, there is a union of $r_{k-1}$-balls $E_{k-1}$ such that $E_{k-1}$ is regular with factors $(\ka_{r_1},\ldots,\ka_{r_{k-1}})$ and $|E_{k-1}|\gtrsim(\log R)^{-3(k-1)}|E|$. 

\smallskip 

If $k=n$, then the algorithm terminates.  
Otherwise, consider a maximal covering of $r_k$-balls $\{Q\}$ of $E_{k-1}$.
At an $O(1)$ cost in the size of $E_{k-1}$, we may assume that these $r_k$-balls are disjoint.
Apply Lemma \ref{regular-lem-single} to the set $E_{k-1}\cap Q\subset Q$ so that there exists a factor $\ka_Q$ and a union of $r_{k-1}$-balls   $E_{\ka_Q}\subset E_{k-1}\cap Q$ such that 
\begin{enumerate}
    \item $|E_{\ka_Q}|\gtrsim(\log R)^{-1}|E_{k-1}\cap Q|$
    \item $E_{\ka_Q}$ is $\ka_Q$ regular in $Q$.
\end{enumerate}
By dyadic pigeonholing, there exists a collection of $r_k$-balls $\cQ$ such that 
\begin{enumerate}
    \item $\ka_Q$ are the same for all $Q\in\cQ$.
    \item $|N_{r_{k-1}}(E_{\ka_Q})\cap Q|$ are the same for all $Q\in\cQ$ up to a constant multiple.
    (Hence \eqref{uniform-assump} holds).
    \item $\big|E_{k-1}\cap \bigcup_{Q\in\cQ}Q\big|\gtrsim(\log R)^{-2}|E_{k-1}|$.
\end{enumerate}
We define
\begin{equation}
    E_k:
    %=\bigcup_{Q\in\cQ}(E_{\ka_Q}\cap Q) 
    = \bigcup_{Q\in\cQ} E_{\ka_Q}
\end{equation}
Then it is easy to see that $|E_k|\gtrsim(\log R)^{-3}|E_{k-1}|$, which yields $|E_{k}|\gtrsim(\log R)^{-3k}|E|$, through the inductive hypothesis. 
Let $\kappa_{r_k}=\kappa_Q$, so that $E_k$ is regular with factors $(\ka_{r_1},\ldots,\ka_{r_{k}})$ and $|E_{k}|\gtrsim(\log R)^{-3k}|E|$. 

\smallskip 

When the algorithm stops, we obtain set $E_n$ with $|E_{n}|\gtrsim(\log R)^{-3n}|E|$ such that $E_n$ is regular with factors $(\ka_{r_1},\ldots,\ka_{r_{n}})$. 
Take $E'=E_n$.
\qedhere
\end{proof}

\bigskip

\subsection{Refined $L^2$ estimates for a $\kappa$-regular set}

We now provide a quantitative characterization of the local $L^2$ norm of $Sf\Id_F$ where $F$ is a $\ka$-regular set. 
The argument is combinatorial in nature and relies on repeated applications of the pigeonhole principle.

\smallskip 

For a ball   $B$, we define the weight function $w_B$ to be
\begin{equation}
w_B(x) = \frac{1}{ \big( 1+ \frac{{\rm dist}(x, B)}{r(B)}\big)^{L}}\,,
\end{equation}
where $r(B)$ is the radius of the ball $B$, ${\rm dist}(x, B)$ represents the distance between $x$ and $B$, and  $L$ is a sufficiently large constant that may change from line to line. 
The weighted norm $\|f\|_{L^2(w_B)}$ is defined as  
\begin{equation}
    \|f\|_{L^2(w_B)} := \bigg( \int_{\mathbb R^2} \big|f\big|^2 w_B dx\bigg)^{1/2}\,.
\end{equation}
Similarly, the weighted broad norm $\|Sf\|_{L^2_{{\rm br}_M}(E)}$ is defined by substituting the corresponding $L^2(E)$-norm with the weighted norm $L^2(w_E)$ when $E$ is a ball.
The weight function $w_B$ is specifically crafted to address contributions from Schwartz tails. Essentially, it can be regarded as the indicator function $\Id_B$, since the primary contribution arises from $B$.

\smallskip 

Let $d(\tau)$ be the diameter (length) of a cap $\tau\subset\ZS^1$.
We will repeatedly use certain $L^2$ orthogonality and an essentially constant property, both arising from the wave packet decomposition of $Sf$, although the decomposition itself will not be used explicitly.
Let us summarize the into two lemmas.
\begin{lemma}
\label{l2-ortho-lem}
Let $Q$ be a ball of radius $r$.
Then we have
\begin{equation}
\label{l2-orthogonality}
    \big\|Sf\big\|_{L^2(Q)}^2\lesssim \sum_{\tau:\,d(\tau)=r^{-1}}\int_{2Q} |S_\tau f|^2.
\end{equation}
\end{lemma}
\begin{proof}
Let $\varphi_Q$ be a bump function equal to 1 on $Q$ and supported in $2Q$.
Thus,
\begin{equation}
    \big\|Sf\big\|_{L^2(Q)}^2\leq \int |Sf|^2\vp_Q\lesssim \sum_{\tau:\,d(\tau)=r^{-1}}\int |S_\tau f|^2\vp_{Q}\leq \sum_{\tau:\,d(\tau)=r^{-1}}\int_{2Q} |S_\tau f|^2.
\end{equation}
In the second inequality, we use the fact $\wh\vp_Q$ decays rapidly outside the $r^{-1}$-ball centered at the origin. 
This proves \eqref{l2-orthogonality}.
\end{proof}

\begin{lemma}
\label{essentially-constant}
Let $\tau\subset\ZS^1$ be an arc of diameter $r^{-1/2}$, and let $T$ be an $r^{1/2}\times r$-tube in direction $\tau$.
Let $\Id_T^\ast$ denote a smooth function that decays rapidly outside $T$, $\Id_T^\ast\gtrsim 1$ on $3T$, and $\widehat{\Id_T^\ast}$ is supported in an $r^{-1/2} \times r^{-1}$ cap centered at the origin, whose direction is normal to $\tau$.
Let $E$ be a set that is contained in $3T$.
Then
\begin{equation}
\label{schur-test}
    \int_{E}|S_\tau f|^2\lesssim \frac{|E|}{r^{3/2}}\int|\Id_T^\ast S_\tau f|^2.
\end{equation}
\end{lemma}
\begin{proof}
Note that $\Id_T^\ast S_\tau f=m_\tau\ast (\Id_T^\ast S_\tau f)$ for some multiplier $m_\tau$ such that $\|\Id_Q\ast m_\tau\|_\infty\lesssim |E|/r^{3/2}$.
Thus, \eqref{schur-test} follows from Young's convolution inequality and the simple pointwise estimate $|\Id_{E}S_\tau f|\lesssim |\Id_T^\ast S_\tau f|$.
\end{proof}

%The rest of this section is about refined $L^2$ estimates on a regular set $E$, for multipliers whose Fourier %supports are contained in a thin neighborhood of certain non-degenerate curves. 
%Suppose $\phi$ is a given $C^\infty$ function with $|\phi''|\sim1$. Let the operators $Sf$ and $S_\si f$ be given in %the form of \eqref{s-tau-general}. 

\begin{lemma}
\label{refined-l2-single-lem}
Let $F$ be a union of finitely overlapping $r^{1/2}$-balls $\{B\}$.
Suppose $F$ is $\ka_r$ regular in $B_r$. 
Then for any function 
$f\in L^2$, we have
\begin{equation}
\label{refined-l2-single-1}
    \sum_{B\subset F}\big\|Sf\big\|_{L^2_{\Br_M}(B)}^2\lesssim(\log R)^{5}\Big(\frac{|F|}{\ka_r |B_r|}\Big)\big\|Sf\big\|_{L^2_{\Br_{M-2}}(w_{B_r})}^2
\end{equation}
and
\begin{equation}
\label{refined-l2-single-2}
    \big\|Sf\big\|_{L^2(F)}^2\lesssim\ka_r\big\|Sf\big\|_{L^2(w_{B_r})}^2.
\end{equation}

\end{lemma}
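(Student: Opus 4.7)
My plan is to establish the two estimates separately, with \eqref{refined-l2-single-2} providing an $L^2$-orthogonality input that is then combined with a more delicate cap-selection argument for \eqref{refined-l2-single-1}.

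For \eqref{refined-l2-single-2}, I would decompose $Sf=\sum_{\theta'}S_{\theta'}f$ into $r^{-1/2}$-caps $\theta'$. Local $L^2$-orthogonality at the scale $r^{1/2}$---valid because the pieces $S_{\theta'}f$ remain essentially frequency-disjoint after convolution with a bump of physical scale $r$---yields $\|Sf\|_{L^2(F)}^2\lesssim\sum_{\theta'}\|S_{\theta'}f\|_{L^2(F)}^2$. Each $S_{\theta'}f$ is a sum of wave packets on $r^{1/2}\times r$-tubes $T'$ in the direction normal to $\theta'$, and the $\ka_r$-regularity $|F\cap T'|\lesssim\ka_r|T'|$ forces $\|S_{\theta'}f\|_{L^2(F)}^2\lesssim\ka_r\|S_{\theta'}f\|_{L^2(w(B_r))}^2$. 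Summing over $\theta'$ gives \eqref{refined-l2-single-2}.

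For \eqref{refined-l2-single-1}, the key idea is to assign to each $r^{1/2}$-ball $B\subset F$ a single cap $\sigma^*(B)$ meeting three conditions. Fixing a tube $T_B\in\ZT_{\ka_r}$ with $B\subset T_B$ and letting $\sigma_{T_B}$ be the $(\log R)^{-1}$-cap aligned with $T_B$, I want $\sigma^*(B)$ to (a) lie in the local top $M$ caps at $B$, so that $\|Sf\|_{L^2_{\Br_M}(B)}\le\|S_{\sigma^*(B)}f\|_{L^2(B)}$; (b) be distinct from $\sigma_{T_B}$, giving transversality to the direction of $T_B$; and (c) sit \emph{outside} the global top $M-2$ caps ordered by $\|\cdot\|_{L^2(w(B_r))}$, so that $\|S_{\sigma^*(B)}f\|_{L^2(w(B_r))}\le\|Sf\|_{L^2_{\Br_{M-2}}(w(B_r))}$. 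Such a $\sigma^*(B)$ always exists by a short pigeonhole: deleting the single cap $\sigma_{T_B}$ from the local top $M$ leaves $M-1>M-2$ candidates, not all of which can fall inside the global top $M-2$.

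With this choice, I pigeonhole on the values $\sigma^*(B)=\sigma$ and $\sigma_{T_B}=\sigma_0$ at a cost of $(\log R)^2$, reducing to summing over balls lying in fat tubes of a fixed direction $\sigma_0$ transverse to $\sigma$. A wave-packet analysis of $S_\sigma f$ in tubes of direction $\sigma$ then yields, for each $r^{1/2}\times r$-tube $T$ of direction $\sigma_0$,
\begin{equation*}
\|S_\sigma f\|_{L^2(T)}^2\lesssim \tfrac{\log R}{r^{1/2}}\|S_\sigma f\|_{L^2(w(B_r))}^2,
\end{equation*}
the factor $\log R/r^{1/2}$ coming from the $\sim(\log R)^{-1}$ angular gap between $\sigma$ and $\sigma_0$ and the fact that only $\sim r^{1/2}$ wave packets of $S_\sigma f$ meaningfully hit $T$. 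Summing across the at most $\#\ZT_{\ka_r}$ such fat tubes converts $r^{-1/2}$ into $\#\ZT_{\ka_r}/r^{1/2}=|F|/(\ka_r|B_r|)$, and invoking (c) upgrades $\|S_\sigma f\|_{L^2(w(B_r))}^2$ to $\|Sf\|_{L^2_{\Br_{M-2}}(w(B_r))}^2$. The remaining logarithmic factors (totalling $(\log R)^5$) are absorbed by auxiliary dyadic pigeonholes on the size of $\|S_\sigma f\|_{L^2(B)}$ and on tube multiplicities, together with Schwartz-tail handling through the weight $w_{B_r}$. I expect the main obstacle to be the simultaneous enforcement of (a), (b), (c): without (c) one is left with a raw $L^2(w(B_r))$-bound on $S_\sigma f$ that cannot be absorbed by the broad norm, which would break the $M\mapsto M-2$ reduction essential for iterating Lemma \ref{refined-l2-single-lem} into Proposition \ref{l2-refine-prop}.
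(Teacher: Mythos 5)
Your proof of \eqref{refined-l2-single-2} is the same as the paper's: decompose into $r^{-1/2}$-caps and wave packets on $r^{1/2}\times r$ tubes, use local $L^2$-orthogonality and the density condition $|N_{r^{1/2}}(F)\cap T'|\lesssim\ka_r|T'|$. For \eqref{refined-l2-single-1} your route is genuinely different in one structural respect. The paper never selects a single cap per ball; instead it runs two dyadic pigeonholings (making the per-ball and per-tube quantities comparable) and two double-counting arguments over pairs $(\si,B)$ and $(\si,T)$, producing a set $\Si(B_r)$ of at least $M-2$ caps for which the final bound holds \emph{simultaneously}, and only then converts the right-hand side into $\|Sf\|_{L^2_{\Br_{M-2}}(w(B_r))}$ by taking the minimum over that set. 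You achieve the $M\mapsto M-2$ conversion per ball, by forcing the selected cap $\sigma^*(B)$ to lie outside the globally largest caps, so its weighted norm is directly dominated by the $\Br_{M-2}$ broad norm; this dispenses with the comparability pigeonholes and the two rounds of double counting, and the remaining per-tube wave-packet estimate (each transverse packet deposits only a $\lesssim(\log R)r^{-1/2}$ fraction of its mass in a fixed tube, so $\|S_\si f\|_{L^2(T)}^2\lesssim(\log R)r^{-1/2}\|S_\si f\|_{L^2(w(B_r))}^2$) is exactly the mechanism behind the paper's \eqref{l2-ortho-2} and subsequent transversality step, and your bookkeeping $\#\ZT_{\ka_r}\cdot r^{-1/2}\lesssim|F|/(\ka_r|B_r|)$ is correct. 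So your scheme is viable and arguably cleaner.

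There is, however, one concrete flaw in the cap-selection step: requiring $\sigma^*(B)$ merely to be \emph{distinct} from $\sigma_{T_B}$ does not give the quantitative transversality you use afterwards. If $e(T_B)$ lies near the boundary of $\sigma_{T_B}$, the adjacent $(\log R)^{-1}$-cap contains directions at arbitrarily small angle to $e(T_B)$, and then a wave-packet tube $T'$ of that direction can essentially coincide with $T_B$, so the per-tube estimate degenerates from a loss of $\log R$ to a loss of $r^{1/2}$. You must exclude all caps within angle $\lesssim(\log R)^{-1}$ of $e(T_B)$, which is up to two caps (this is precisely why the paper's set $\Si_T(B)$ has cardinality $\geq M-2$, not $M-1$). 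Your pigeonhole then has only $M-2$ local candidates rather than $M-1$, so the counting as you stated it ("$M-1>M-2$") no longer applies literally; but it still closes, because the set of caps whose weighted norm strictly exceeds the $(M-2)$-th largest value $\|Sf\|_{L^2_{\Br_{M-2}}(w(B_r))}$ has at most $M-3$ elements, and $M-2>M-3$. With this correction (and the hypothesis $M\geq 3$ or so to keep the counts positive), your argument goes through and yields the stated bound with room to spare in the $(\log R)^5$ factor.
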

\begin{proof} 
For $\tau\subset\ZS^1$ with $d(\tau)=r^{-1/2}$, denote by $\mathbb T_\tau$ a collection of finitely overlapping $r^{1/2}\times r$ tubes intersecting $B_r$ with direction $\tau$.
Recall that $\{B\}$ is a maximal covering of $F$ by $r^{1/2}$-balls.  
By Lemma \ref{l2-ortho-lem}, we have
\begin{equation}
    \big\|Sf\big\|_{L^2(F)}^2\lesssim\sum_{Q}\sum_{\tau:d(\tau)=r^{-1/2}}\int_{2Q} | S_\tau f|^2.
\end{equation}
Thus, by Definition \ref{kappa-regular} and Lemma \ref{essentially-constant} (with $E=2Q$), we get
\begin{align}
\label{no-broad}
    \big\|Sf\big\|_{L^2(F)}^2\lesssim\ka_r\sum_{\tau,d(\tau)=r^{-1/2}}\sum_{T\in\ZT_\tau}\int |\Id_T^\ast S_\tau f|^2\lesssim  \ka_r\big\|Sf\big\|_{L^2(w_{B_r})}^2,
\end{align}
which gives \eqref{refined-l2-single-2}. 

\smallskip 
 
Next, we prove \eqref{refined-l2-single-1}. 
Recall Definition \ref{kappa-regular}.
Since $F$ is $\ka$ regular, there is a set $\ZT_{\ka_r}$ of $r^{1/2}\times r$-tubes  such that $\#\ZT_{\ka_r}\lesssim|N_{r^{1/2}}(F)|/(\ka r^{3/2})\sim|F|/(\ka r^{3/2})$ and 
\begin{equation}
\label{partition-to-ball}
    \sum_{B\subset F}\big\|Sf\big\|_{L^2_{\Br_M}(B)}^2\lesssim\sum_{T\in\ZT_{\ka_r}}\sum_{B\subset F\cap T}\big\|Sf\big\|_{L^2_{\Br_M}(B)}^2.
\end{equation}
By dyadic pigeonholing on $\{\sum_{B\subset F\cap T}\big\|Sf\big\|_{L^2_{\Br_M}(B)}^2: T\in\ZT_{\ka_r}\}$, there exists a $\ZT_{\ka_r}'\subset\ZT_{\ka_r}$, such that
\begin{enumerate}
    \item $\sum_{B\subset F\cap T}\big\|Sf\big\|_{L^2_{\Br_M}(B)}^2$ are the same up to a constant multiple, for all $T\in\ZT_{\ka_r}'$.
    \item We have
    \begin{equation}
    \label{pigeonhole-l2-1}
        \sum_{T\in\ZT_{\ka_r}'}\sum_{B\subset F\cap T}\big\|Sf\big\|_{L^2_{\Br_M}(B)}^2\gtrsim(\log R)^{-1}\sum_{T\in\ZT_{\ka_r}}\sum_{B\subset F\cap T}\big\|Sf\big\|_{L^2_{\Br_M}(B)}^2.
    \end{equation}
\end{enumerate}

Fix an $r^{1/2}\times r$ tube $T\in\ZT_{\ka_r}$. By dyadic pigeonholing again, there exists a collection of $r^{1/2}$-balls $\cb(T)$ such that
\begin{enumerate}
     \item[$\bullet$] $\big\|Sf\big\|_{L^2_{\Br_M}(B)}^2$ are the same up to a constant multiple, for all $B\in\cb(T)$.
     \item[$\bullet$]  We have
    \begin{equation}
    \label{pigeonhole-l2-2}
        \sum_{B\in\cb(T)}\big\|Sf\big\|_{L^2_{\Br_M}(B)}^2\gtrsim(\log R)^{-1}\sum_{B\subset F\cap T}\big\|Sf\big\|_{L^2_{\Br_M}(B)}^2.
    \end{equation}
\end{enumerate}
Recall the definition of the broad norm in Definition \ref{def-broad-norm}. 
For each $r^{1/2}$-ball $B$, let $\Si(B)$ be the collection of $(\log R)^{-1}$-caps such that $\#\Si(B)=M$, and for any $\si\in\Si(B)$, we have
\begin{equation}
    \big\|Sf\big\|_{L^2_{\Br_M}(B)}^2\lesssim\big\|S_\si f\big\|_{L^2(B)}^2.
\end{equation}
For each $r^{-1/2}$-cap $\tau$, define a collection of $r^{1/2}\times r$-tubes $\ZT_{\tau}(B)$ as
\begin{equation}
    \ZT_{\tau}(B):=\{T\in \ZT_{\tau}:  T\cap 2B\not=\varnothing \}.
\end{equation}
Then, by Lemmas \ref{l2-ortho-lem} and \ref{essentially-constant} (with $E=2B$), 
\begin{align}
\label{l2-ortho-2}
    \big\|S_\si f\big\|_{L^2(B)}^2\lesssim\sum_{\tau\subset\si}\big\|S_\tau f\big\|_{L^2(2B)}^2\lesssim  r^{-1/2}\sum_{\tau\subset\si}\sum_{T\in\ZT_{\tau}(B)}\big\|\Id_{T}^\ast S_\tau f\big\|_{2}^2.
\end{align}
Here, the sum over $\tau$ ranges over a finitely overlapping cover of $\sigma$ by $r^{-1/2}$-caps.

For each $r^{1/2}\times r$-tube, let $\theta_T\subset \ZS^1$ be its directional cap. 
Denote by $\Si_{T}(B)\subset \Si(B)$ the collection of $(\log R)^{-1}$-caps $\sigma$ obeying $\dist(\theta_T,\si)\gtrsim (\log R)^{-1}$. Then $\# \Si_{T}(B)\geq M-2$. Let us consider the pairs $(\si,B)\in\Si\times\cb(T)$. Note that
\begin{equation}
\label{double-count-1}
    \sum_{B\in\cb(T)}\sum_{\si\in\Si}\Id_{\{\si\in\Si_{T}(B)\}}\geq (M-2)\#\cb(T).
\end{equation}
Since $\#\Si\lesssim\log R$, we have
\begin{equation}
    \text{L.H.S. of }\eqref{double-count-1}=\sum_{\si\in\Si}\sum_{B\in\cb(T)}\Id_{\{\si\in\Si_{T}(B)\}}\lesssim (\log R)\#\cb(T).
\end{equation}
Therefore, since for any $\si$, $\#\{B\in\cb(T):\si\in\Si_{T}(B)\}\leq\#\cb(T)$, there exists a set $\Si(T)\subset \Si$ such that 
\begin{enumerate}
    \item[$\bullet$] $\#\Si(T)\geq M-2$.
    \item[$\bullet$] For any $\si\in \Si(T)$, 
    \begin{equation}
    \label{number-B-lower-bound}
        \#\{B\in\cb(T):\si\in\Si_{T}(B)\}\gtrsim(\log R)^{-1}\#\cb(T).
    \end{equation}
\end{enumerate}
Since $\big\|Sf\big\|_{L^2_{\Br_M}(B)}^2$ are comparable up to a constant multiple for all $B\in\cb(T)$, it follows from \eqref{pigeonhole-l2-2} and \eqref{number-B-lower-bound} that, for any $\si\in\Si(T)$,
\begin{equation}
    \sum_{B\in\cb(T):\si\in\Si_T(B)}\big\|S_\si f\big\|_{L^2(B)}^2\gtrsim(\log R)^{-2}\sum_{B\in F\cap T}\big\|Sf\big\|_{L^2_{\Br_M}(B)}^2.
\end{equation}
Since $\dist(\theta_T, \si)\gtrsim(\log R)^{-1}$,  we see that any tube $T'\in \mathbb T_{\sigma, \tau}$ is transversal to the tube $T$. Thus, there are at most $O(\log R)$ many $r^{1/2}$-balls   $B$ belonging to $\mathcal B(T)\cap T'$.   
Thus, from \eqref{l2-ortho-2},  we get
\begin{align}
    \sum_{B\in\cb(T):\si\in\Si_T(B)}\big\|S_\si f\big\|_{L^2(B)}^2&\lesssim r^{-1/2}\sum_{\tau\subset\si} 
    \sum_{T'\in\ZT_{\tau}}\sum_{B\in \cb(T)\cap T'}\big\|S_\tau f\Id_{T'}\big\|_{L^2(B_r)}^2\\
    &\lesssim (\log R)r^{-1/2}\big\|S_\si f\big\|_{L^2(w_{B_r})}^2,
\end{align}
which yields that for any $\si\in\Si(T)$,
\begin{equation}\label{1*}
    \sum_{B\in F\cap T}\big\|Sf\big\|_{L^2_{\Br_M}(B)}^2\lesssim(\log R)^{3}r^{-1/2}\big\|S_\si f\big\|_{L^2(w_{B_r})}^2.
\end{equation}

\smallskip

Finally, consider the pairs $(\si,T)\in\Si\times\ZT_{\ka_r}'$. Since $\#\Si(T)\geq M-2$ for any $T\in\ZT_{\ka_r}'$, similarly, by pigeonholing, there exists a set $\Sigma'\subset \Si$ so that 
\begin{enumerate}
    \item[$\bullet$] $\#\Sigma'\geq M-2$.
    \item[$\bullet$] For any $\si\in \Sigma'$, 
    \begin{equation}
        \#\{T\in\ZT_{\ka_r}':\si\in\Si(T)\}\gtrsim(\log R)^{-1}\#\ZT_{\ka_r}'.
    \end{equation}
\end{enumerate}
 Since  $\sum_{B\subset F\cap T}\big\|Sf\big\|_{L^2_{\Br_M}(B)}^2$ are the same up to a constant multiple for all $T\in\ZT_{\ka_r}'$, by \eqref{pigeonhole-l2-1} and (\ref{1*}), we end up with, for any $\si\in\Sigma'$, 
\begin{align}
    \sum_{T\in\ZT_{\ka_r}}\sum_{B\in F\cap T}\big\|Sf\big\|_{L^2_{\Br_M}(B)}^2&\lesssim(\log R)^2\sum_{\substack{T\in\ZT_{\ka_r}'\\\si\in\Si(T)}}\sum_{B\in F\cap T}\big\|Sf\big\|_{L^2_{\Br_M}(B)}^2\\
    &\lesssim(\log R)^{5}r^{-1/2}(\#\ZT_{\ka_r})\big\|S_\si f\big\|_{L^2(w_{B_r})}^2.
\end{align}
Using  \eqref{partition-to-ball} and $\#\ZT_{\ka_r}\lesssim|N_{r^{1/2}}(E)|/(\ka r^{3/2})$,  we can finally conclude from the above estimate that for any $\si\in\Sigma'$, 
\begin{equation}
    \sum_{B\subset F}\big\|Sf\big\|_{L^2_{\Br_M}(B)}^2\lesssim(\log R)^{5}\Big(\frac{|F|}{\ka r^2}\Big)\big\|S_\si f\big\|_{L^2(w_{B_r})}^2,
\end{equation}
which yields, since $\#\Sigma'=M-2$, that
\begin{equation}
\label{yesbroad}
    \sum_{B\subset F}\big\|Sf\big\|_{L^2_{\Br_M}(B)}^2\lesssim(\log R)^{5}\Big(\frac{|F|}{\ka |B_r|}\Big)\big\|S_\si f\big\|_{L^2_{\Br_{M-2}}(w(B_r))}^2.
\end{equation}
Lemma \ref{refined-l2-single-lem} now follows from \eqref{no-broad} and \eqref{yesbroad}.
\end{proof}

\bigskip

\subsection{Proof of Proposition \ref{l2-refine-prop}}

We will use Lemma \ref{refined-l2-single-lem} at each scale, from the smallest one $r_1$ to the biggest one $r_n$. We can assume the weighted norm associated to $w_B$ in (\ref{refined-l2-single-1}) and \eqref{refined-l2-single-2} to be the norm associated to $B$, because the weighted function $w_B$ can be viewed essentially as $\Id_B$.  
In fact, by decomposing the whole space into dyadic annuli concentric with each ball $B$, we see that if the tail of the weight dominates for some $B$, the conclusion of the Proposition follows trivially.
Therefore, we may ignore the tail of the weight.

Denote $N_{r_k}(F)$ by $F_k$ for $k\in\{1, \cdots, n\}$. 
Since $F$ is regular with factors $(\ka_{r_1},\ldots, \ka_{r_n})$, for each $k$ we have
\begin{equation}
    \frac{|F_{k-1}\cap Q_{r_k}|}{|Q_{r_k}|}\sim\frac{|F_{k-1}|}{|F_k|}.
\end{equation}
for each $r_k$-ball $Q_{r_k}$ in a maximal disjoint covering of $F$ by $r_k$-balls.

At the $k$-th scale, apply \eqref{refined-l2-single-1} to each such  $r_k$-ball $Q_{r_k}$ and the set $F_{k-1}\cap Q_{r_k}$ to get 
\begin{align}
\nonumber
    \sum_{Q_{r_{k-1}}\subset F_{k-1}\cap Q_{r_k}}\!\!\!\big\|Sf\big\|_{L^2_{\Br_{M-2k+2}}(Q_{r_{k-1}})}^2
    &\lesssim(\log R)^{5}\Big(\frac{|F_{k-1}\cap Q_{r_k}|}{\ka_{r_k} |Q_{r_k}|}\Big)\big\|Sf\big\|_{L^2_{\Br_{M-2k}}(Q_{r_k})}^2\\ \nonumber
    &\lesssim(\log R)^{5}\Big(\frac{|F_{k-1}|}{\ka_{r_k} |F_k|}\Big)\big\|Sf\big\|_{L^2_{\Br_{M-2k}}(Q_{r_k})}^2.
\end{align}
Sum up all these balls $Q_{r_k}$ in $F_k$ so that
\begin{align}
\nonumber
    \sum_{Q_{r_{k-1}}\subset F_{k-1}}\big\|Sf\big\|_{L^2_{\Br_{M-2k+2}}(Q_{r_{k-1}})}^2
    \lesssim(\log R)^{5}\Big(\frac{|F_{k-1}|}{\ka_{r_k} |F_k|}\Big)\sum_{Q_{r_{k}}\subset F_{k}}\big\|Sf\big\|_{L^2_{\Br_{M-2k}}(Q_{r_k})}^2,
\end{align}
for every $k\in\{1,\cdots, n\}$. 
Multiplying these inequalities together results in obtaining \eqref{want-1} as desired.

Similarly, apply \eqref{refined-l2-single-2} for each $Q_{r_k}$ to get
\begin{equation}
    \big\|Sf\big\|_{L^2(F_{k-1}\cap Q_{r_k})}^2\lesssim\ka_{r_k}\big\|Sf\big\|_{L^2(Q_{r_k})}^2.
\end{equation}
Sum up all $Q_{r_k}$ in $N_{r_k}(F)$ so that
\begin{equation}
    \big\|Sf\big\|_{L^2(F_{k-1})}^2\lesssim\ka_{r_k}\big\|Sf\big\|_{L^2(F_k)}^2,
\end{equation}
for every $k\in\{1, \cdots, n\}$.
Consequently, \eqref{want-2} follows by taking product of these inequalities. Therefore, we complete the proof. 

\bigskip

\section{Exploring the maximal operator $T_*^\la$: Initial insights}
\label{linearization-section}

We are interested in the maximal operator $T_*^\lambda$ defined as in (\ref{br-mean}). It is clearly connected with the Fourier multiplier 
$$
 m_t^\la(\xi):= (2\pi)^{-n}(1-|\xi|^2/t^2)^\la_+\,, 
$$
for $\xi\in\mathbb R^n$. Going forward, we concentrate on the planar case with 
$n=2$,  while noting that many conclusions in this section remain applicable to higher-dimensional Euclidean spaces. 

\bigskip

\subsection{Reduction and linearization for the maximal operator}

We perform a standard frequency decomposition of the Fourier multiplier $m_t^\la$ by localizing the frequency space via a smooth partitioning of unity.  
Let $\eta_0$ be a bump function of   $B_{1/2}$, and $\eta_k$ be a bump function of the annulus $\{\xi\in\ZR^2:1-2^{-k}\leq |\xi|\leq 1-2^{-k-1} \}$ for $k\in \mathbb N$ such that $\sum_{k=0}^\infty \eta_k =1$ on the unit ball. 
We now define 
\begin{equation}
  m_{k, t}^\lambda (\xi)= m_t^\la(\xi) \eta_k(\xi/t)\,, 
\end{equation}
for any $\xi\in\mathbb R^2$. 
It is clear that $m_t^\lambda=\sum_{k=0}^\infty m^\la_{k,t}$.
Let $K_{k, t}^\lambda$ denote the inverse Fourier transform of $m_{k, t}^\la$, i.e., 
\begin{equation}
 \wh{K_{k, t}^\lambda} = m_{k, t}^\la\,. 
\end{equation}
By the triangle inequality, to prove Theorem \ref{main}, it suffices to prove that for each $k\geq 0$, 
\begin{equation}
\label{general-t}
    \big\|\sup_{t>0}|K_{k,t}^\la\ast f|\big\|_{p} \lesssim_\e 2^{k\e} 2^{-k\la}\big\|f\big\|_p
\end{equation}
for any $\e>0$ and $5/3\leq p \leq 2$. To further localize the variable $t$, we need the following proposition proved by Tao \cite{Tao-weak-type-BR}. A heuristic proof is presented in \cite{Tao-MBR}, Section 4. For a detailed exposition, see also the appendix of \cite{Li-Wu-MBR}. 

\begin{proposition}
\label{localization-prop}
Fix any $1\leq p\leq 2$, $k\in\ZN$, and $\la>0$. Suppose that
\begin{equation} 
    \big\|\sup_{t\in[1/2,1]}|K_{k,t}^\la\ast f|\big\|_{L^p(B_{2^k})}^p\lesssim_\e 2^{k\e}2^{-k\la}\big\|f\big\|_p^p.
\end{equation}
for any $\e>0$ and any $f\in L^p$. Then \eqref{general-t} is true. 
\end{proposition}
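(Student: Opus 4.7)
The plan is to upgrade the localized hypothesis to the global maximal estimate in two phases—first spatially globalizing, then globalizing in $t$—exploiting at every stage the essential spatial concentration of $K_{k,t}^\la$ at scale $2^k/t$.

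\textbf{Spatial globalization.} I would first prove
\[
\bigl\|\sup_{t\in[1/2,1]}|K_{k,t}^\la\ast f|\bigr\|_{L^p(\ZR^2)}^p\lesssim_\e 2^{k\e}2^{-k\la}\|f\|_p^p.
\]
The key input is that $m_{k,t}^\la$ is smoothly supported on a $2^{-k}$-thick annulus of $|\xi|=t$, and by a stationary-phase/Bessel computation $|K_{k,t}^\la(x)|\lesssim 2^{-k\la}(1+|x|/2^k)^{-N}$ for arbitrary $N$ once $|x|\gg 2^k$. I would cover $\ZR^2$ by a lattice of balls $B_\al=B(x_\al,2^k)$, decompose $f=\sum_\be f_\be$ via a smooth partition of unity with $\supp f_\be\subset 2B_\be$, and split each $L^p(B_\al)$-contribution into a \emph{near} part ($|x_\al-x_\be|\lesssim 2^k$, controlled directly by the translated hypothesis via translation-invariance of the convolution kernel) and a \emph{far} part ($|x_\al-x_\be|\gg 2^k$, controlled by the kernel-decay factor $(|x_\al-x_\be|/2^k)^{-N}$, which is absolutely summable in $\be$). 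Summing over $\al$ with the bounded overlap of near contributions yields the global bound.

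\textbf{Temporal globalization.} For arbitrary $t>0$, write $t=2^l s$ with $s\in[1/2,1]$ and $l\in\ZZ$. The identity $K_{k,2^l s}^\la(x)=2^{2l}K_{k,s}^\la(2^l x)$ exhibits the operator at scale $t$ as a spatial dilation of the one at scale $s$, and scale-invariance of $L^p$ promotes the spatial phase to
\[
\bigl\|\sup_{t\in[2^{l-1},2^l]}|K_{k,t}^\la\ast f|\bigr\|_p^p\lesssim_\e 2^{k\e}2^{-k\la}\|f\|_p^p,\qquad\text{uniformly in }l.
\]
Since $\supp m_{k,t}^\la\subset\{|\xi|\sim t\}$, we have $K_{k,t}^\la\ast f=K_{k,t}^\la\ast P_l f$ whenever $t\sim 2^l$, where $P_l$ is the Littlewood--Paley projection to $|\xi|\sim 2^l$. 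Setting $T_l g:=\sup_{t\sim 2^l}|K_{k,t}^\la\ast g|$, it follows that $\sup_{t>0}|K_{k,t}^\la\ast f|\lesssim \sup_l T_l P_l f$. For $p\leq 2$, combining the pointwise inequality $\sup_l|a_l|\leq(\sum_l|a_l|^2)^{1/2}$ with a Fefferman--Stein-type vector-valued extension of the uniform $L^p$-bound on $\{T_l\}$ (justified by the common scaling structure $T_l=D_{2^l}^{-1}T_0 D_{2^l}$) and the Littlewood--Paley square function equivalence $\|(\sum_l|P_l f|^2)^{1/2}\|_p\sim\|f\|_p$ gives the desired global bound.

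\textbf{Main obstacle.} The technically delicate step is the vector-valued inequality $\|(\sum_l|T_l P_l f|^2)^{1/2}\|_p\lesssim\|(\sum_l|P_l f|^2)^{1/2}\|_p$ in the temporal phase: because the $T_l$ vary with $l$, the scalar $L^p$-bound does not immediately upgrade to an $L^p(\ell^2)$-bound, so one must exploit the dilation relation among them (a naive $\ell^p$-sum $\sum_l\|P_l f\|_p^p$ is not controlled by $\|f\|_p^p$ when $p<2$). A cleaner alternative is to linearize $\sup_{t>0}$ via a measurable selector $t(x)$, partition $\ZR^2$ into level sets $E_l=\{x:t(x)\in[2^{l-1},2^l]\}$, and rescale each $E_l$ by $2^l$ to reduce every contribution to an $L^p(B_{2^k})$ estimate of the type supplied by the hypothesis, with Littlewood--Paley again entering on the frequency side via a square function on the $P_l f$'s. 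The spatial phase is by contrast routine once the kernel decay is in hand.
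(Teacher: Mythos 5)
The paper does not actually prove Proposition \ref{localization-prop}: it is quoted from Tao's weak-type paper, with a heuristic in \cite{Tao-MBR}, Section 4, and the detailed argument in the appendix of \cite{Li-Wu-MBR}. So the comparison is with that argument, and against it your proposal has a genuine gap exactly at the step you yourself flag. Your spatial globalization is indeed routine (the kernel bound $|K^\la_{k,t}(x)|\lesssim_N 2^{-k\la}2^{-2k}(2^{-k}|x|)^{-N}$ for $|x|\gg 2^k$, $t\sim 1$, plus a near/far splitting over a $2^k$-lattice works), and the rescaling identity $K^\la_{k,2^ls}(x)=2^{2l}K^\la_{k,s}(2^lx)$ is correct. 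The problem is the summation over the dyadic $t$-scales for $p<2$, and neither of your two suggested routes closes it. First, the vector-valued inequality $\|(\sum_l|T_l\tilde P_lf|^2)^{1/2}\|_p\lesssim\|(\sum_l|\tilde P_lf|^2)^{1/2}\|_p$ is not ``justified by the common scaling structure $T_l=D_{2^l}^{-1}T_0D_{2^l}$'': a uniformly $L^p$-bounded family of dilates of a single operator does not in general admit an $\ell^2$-valued extension (for dilation families such extensions are of the same depth as square-function/maximal-multiplier theorems and can fail for $p<2$), and the crude vector-valued Calder\'on--Zygmund route costs powers of $2^{k}$ that you cannot afford against the target constant $2^{k\e}2^{-k\la}$. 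So this step is an assertion, not a proof.

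Second, the ``cleaner alternative'' runs into the same wall rather than around it. After linearizing with $t(x)$ and level sets $E_l=\{x:t(x)\sim 2^l\}$, the operator at scale $l$ is local at spatial scale $2^{k-l}$ and sees only frequencies $|\xi|\sim 2^l$, so the rescaled hypothesis bounds the $l$-th contribution by $2^{k\e}2^{-k\la}\|\tilde P_lf\|^p_{L^p(N_{C2^{k-l}}(E_l))}$ plus tails. The $E_l$ are disjoint, but the localization neighborhoods $N_{C2^{k-l}}(E_l)$ are not boundedly overlapping in $l$ (for low frequencies the radius $2^{k-l}$ is enormous, so a fixed point can lie in infinitely many of them); hence one cannot convert the $\ell^p$-sum in $l$ into the pointwise square function $(\sum_l|\tilde P_lf|^2)^{1/2}$ without a H\"older loss of size $(\#\{\text{active scales}\})^{1-p/2}$, which is unbounded, and you are back to the very quantity $\sum_l\|\tilde P_lf\|_p^p$ that you correctly observe is not controlled by $\|f\|_p^p$ for $p<2$. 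This cross-scale interaction is precisely the nontrivial content of the proposition, and it is what the argument of Tao and the Li--Wu appendix is designed to handle; without supplying that mechanism the proof is incomplete. (A minor further point: your use of Littlewood--Paley theory tacitly requires $p>1$, whereas the statement is claimed for all $1\leq p\leq 2$.)
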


For a fixed $k$, let $R=2^k$.  Observe that 
\begin{equation}
    \wh{K_{k, t}^\la} =  2^{-k\la} a\big( R(t-|\xi|)\big)\,, 
\end{equation}
for some bump function on the annulus $\{\xi\in\ZR^2:t-R^{-1}\leq |\xi|\leq t+R^{-1}\}$. Consequently,  from the definition of $S_R^*$ in (\ref{Def-SR}) and Proposition \ref{localization-prop}, we deduce that \eqref{general-t} is equivalent to 
\begin{equation}
\label{local-t}
    \|S^\ast_R f\|_{L^p(B_R)}\lessapprox\|f\|_p,
\end{equation}
for $5/3\leq p\leq 2$, which precisely corresponds to (\ref{max-p-Sf}) stated in Theorem \ref{Lp-max}. \\

Next, we are going to linearize the maximal function $S_R^\ast f$. Let $\{t_j\}_{j=1}^{[R]}$ be a collection of $R^{-1}$-separated points in $[1,2]$. For $1\leq t_j\leq 2$, recall that $A_{t_j, R}$ is defined in Section \ref{Intro} as
the annulus with radius $t_j$ and thickness $R^{-1}$ given as follows: 
\begin{equation}
\label{aj}
    A_{t_j, R} =\big\{\xi\in\ZR^2:t_j-R^{-1}\leq |\xi|\leq t_j+R^{-1}\big\}.
\end{equation}
As in Section \ref{Intro}, $S_j$ ($=S_{t_j, R}$) is defined as 
\begin{equation}\label{defofS-j}
 \wh{S_j f}(\xi) = a\big( R(t_j-|\xi|) \big) \wh f(\xi)\,. 
\end{equation}

\begin{lemma}
\label{linearization-lem}
For any $1\leq p\leq 2$, there exists a collection of disjoint sets $\{F_j\}_{j=1}^{[R]}$, where each $F_j\subset B_R$ is a union of finitely overlapping unit balls, such that
\begin{equation}
    \|S_R^\ast f\|_p\lessapprox\big\|\sum_jS_jf\Id_{F_j}\big\|_p.
\end{equation}
\end{lemma}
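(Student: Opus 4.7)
The plan is to discretize the supremum in $t$ to a max over $[R]$ indices, invoke the locally constant property of $S_jf$ at unit scale via Bernstein's inequality, and then pick a single winning index for each unit ball $B\subset B_R$; a short absorption argument handles the Schwartz-tail nuisance.

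First, since $\partial_t S_{t,R}f=R\,\tilde S_{t,R}f$, with $\tilde S_{t,R}$ the analogous operator obtained by replacing $a$ with $a'$, the fundamental theorem of calculus on intervals $[t_j,t_{j+1}]$ of length $R^{-1}$ yields pointwise
\begin{equation*}
S_R^*f(x)\;\lesssim\;\max_{1\le j\le [R]}|S_jf(x)|+\tilde S_R^*f(x).
\end{equation*}
The tilde term has the same structure as the original maximal operator, so the remainder of the argument applies to it verbatim and I suppress it. Second, because $\wh{S_jf}$ is supported in $A_{t_j,R}\subset B(0,2)$, the reproducing identity $S_jf=S_jf*\check\psi$ (with $\psi\in C_c^\infty$ equal to $1$ on $B(0,2)$) combined with H\"older gives, for every unit ball $B\subset\ZR^2$ and every $x\in B$, the Bernstein-type bound $|S_jf(x)|^p\le C_0\|S_jf\|_{L^p(w_B)}^p$, where $w_B$ is the paper's Schwartz weight with decay exponent $L$ at our disposal. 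Integrating over $B$,
\begin{equation*}
\|S_R^*f\|_{L^p(B)}^p\;\le\;C_0\max_{j}\|S_jf\|_{L^p(w_B)}^p.
\end{equation*}

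For each unit ball $B\subset B_R$, let $j(B)\in\{1,\dots,[R]\}$ be the (measurable) maximizer of $j\mapsto \|S_jf\|_{L^p(w_B)}$ and set $F_j:=\bigcup\{B\subset B_R:j(B)=j\}$, a pairwise disjoint union of unit balls tiling $B_R$. Writing $X:=\sum_{B\subset B_R}\|S_{j(B)}f\|_{L^p(w_B)}^p$, summing the display above over $B$ yields $\|S_R^*f\|_{L^p(B_R)}^p\lesssim X$, and regrouping by $j$ gives
\begin{equation*}
X=\sum_j\int|S_jf|^p W_j,\qquad W_j:=\sum_{B\subset F_j}w_B.
\end{equation*}

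The final step replaces the Schwartz weight $W_j$ by $\Id_{F_j}$ via absorption. Because the unit balls in $F_j$ are essentially disjoint and $w_B$ decays rapidly, $W_j\lesssim 1$ on $F_j$ while $W_j(y)\lesssim(1+\dist(y,F_j))^{-L+2}$ on $F_j^c$. For $y$ in a unit ball $B'\subset F_j^c$ (so $j(B')\ne j$), Bernstein on $B'$ and the argmax property of $j(B')$ give $|S_jf(y)|^p\le C_0\|S_jf\|_{L^p(w_{B'})}^p\le C_0\|S_{j(B')}f\|_{L^p(w_{B'})}^p$; integrating in $y\in B'$ and summing,
\begin{equation*}
\sum_j\int_{F_j^c}|S_jf|^p W_j\;\le\;C_0\sum_{B'\subset B_R}\|S_{j(B')}f\|_{L^p(w_{B'})}^p\sum_{j\ne j(B')}(1+\dist(B',F_j))^{-L+2}.
\end{equation*}
Since the $F_j$'s partition $B_R$ into disjoint unit balls, the number of indices $j$ with $\dist(B',F_j)=k$ is at most the number of unit balls at distance $k$ from $B'$, which is $O(k)$, so the inner sum is bounded by $\sum_{k\ge 1}k(1+k)^{-L+2}$, a series whose value can be made arbitrarily small by taking $L$ large. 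Fixing $L$ so that the series is at most $(2C_0)^{-1}$, the display above is $\le\tfrac12 X$. Combined with the trivial bound $\sum_j\int_{F_j}|S_jf|^p W_j\lesssim\sum_j\|S_jf\|_{L^p(F_j)}^p$ from $W_j\lesssim 1$ on $F_j$, we obtain $X\le C\sum_j\|S_jf\|_{L^p(F_j)}^p+\tfrac12 X$, hence $X\lesssim\|\sum_j S_jf\,\Id_{F_j}\|_p^p$ by disjointness of the $F_j$'s. Combined with $\|S_R^*f\|_{L^p(B_R)}^p\lesssim X$, this proves the lemma. The only subtle point is this absorption step, which rests on the counting estimate on unit balls at distance $k$ from $B'$ together with a sufficiently large choice of the decay exponent $L$ in the weight $w_B$; everything else is routine discretization and Bernstein.
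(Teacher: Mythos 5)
Your step 1 is where the argument breaks down. The pointwise bound $S_R^*f\lesssim\max_j|S_jf|+\tilde S_R^*f$ from the fundamental theorem of calculus is correct, but the remainder $\tilde S_R^*f$ is a maximal operator over continuous $t$ of exactly the same strength as the one you started with, so ``the remainder of the argument applies verbatim'' is circular: to treat $\tilde S_R^*f$ you must discretize in $t$ again, which spawns yet another maximal operator (with $a$ replaced by $a''$), and so on without termination; there is no smallness gained at any stage that would let you absorb, and even if the recursion were stopped, what it would produce is a bound in terms of $\tilde S_j$ (multiplier built from $a'$) and a second family $\tilde F_j$, not the stated lemma. The non-circular variants of the FTC reduction (keeping the remainder as $\int_{I_j}R|\tilde S_{s,R}f|\,ds$, or a Sobolev-type bound, and then integrating over $[1,2]$) convert the problem into an $L^p_xL^p_t$ estimate and lose a positive power of $R$, which cannot be afforded when the target is an $R^\e$ loss. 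What actually makes the $t$-discretization work is that $t\mapsto S_{t,R}f(x)$ is band-limited at scale $R$: the space--time Fourier transform of $(x,t)\mapsto S_{t,R}f(x)$ is, up to normalization, $e^{-i\tau|\xi|}\wh f(\xi)\wh a(\tau/R)\Id_{B_2}(\xi)$, essentially supported in $B_2\times[-R,R]$, so $\sup_{t\in I}|S_{t,R}f(x)|^p$ is controlled by a weighted average of $|S_{t,R}f(x)|^p$ over an $R^{-1}$-neighborhood of $I$ --- precisely the Bernstein/locally-constant estimate you already use in the $x$-variable, applied in the $t$-variable. That is the paper's route: it treats $|S_{t,R}f(x)|$ as essentially constant on products of unit balls and $R^{-1}$-intervals, then selects the best $t_j$ for each unit ball; no derivative term ever appears.

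There is also a smaller hole in the absorption. Each $\|S_{j(B)}f\|_{L^p(w_B)}^p$ is an integral over all of $\ZR^2$, so the tail term $\sum_j\int_{F_j^c}|S_jf|^pW_j$ contains the region outside $B_R$, which your decomposition into unit balls $B'\subset B_R$ with $j(B')\ne j$ does not cover. The obvious repair --- comparing $w_{B'}$ for an outside ball $B'$ with $w_{B''}$ for the nearest ball $B''\subset B_R$ and using the argmax there --- loses a factor $(1+\dist(B',B_R))^{L}$, which exactly cancels the gain $(1+\dist(B',F_j))^{-L+2}$ from $W_j$, so the absorption inequality $X\le C\sum_j\|S_jf\|^p_{L^p(F_j)}+\tfrac12X$ is not established as written. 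This is patchable (for instance by running the locally-constant selection with the relevant norms restricted to $B_R$ from the outset, as the paper implicitly does, the lemma being used only through $\|S_R^*f\|_{L^p(B_R)}$), but it needs to be addressed. Your $x$-side mechanism --- argmax of local weighted norms per unit ball, then Schwartz-tail absorption using the $O(k)$ count of unit balls at distance $k$ --- is sound and is essentially the paper's selection of $j_B$; the missing idea is the locally-constant property in the $t$-variable.
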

\begin{proof}
Recall that $S_R^*f=\sup_{t\in [1, 2]}\big|S_{t, R}f\big|$.  
Consider $S_{t, R}f(x)$ as a function of $(x,t)$ in $\mathbb R^2\times \mathbb R$.  
Then its Fourier transform on $\mathbb R^2\times \mathbb R$ equals to $e^{-i\tau|\xi|}\wh f(\xi)\wh a(\tau/R)\Id_{B_2}(\xi)$, a function of $(\xi,\tau)$ that decays rapidly outside the region $B_2\times [-R,R]$.  
Thus, as a function of $(x,t)$, $\big|S_{t, R}f\big|$ is essentially constant in $B\times I$ (that is, $\|S_{t,R} f\|_\infty\sim_p \|S_{t,R}f\|_{L^p(w_{B\times I})}$ for all $p$), where  $B\subset B_R$ is any unit ball and $I\subset[1,2]$ is any $R^{-1}$ interval.
If the tail of the weight $w_{B\times I}$ in $\|S_{t,R}f\|_{L^p(w_{B\times I})}$ dominates, then there is nothing to prove. 
Otherwise,  we have
\begin{equation}
    \int\sup_{t\in I}|S_{t, R}f|^p\Id_{B} \sim \sup_{t\in I}\int|S_{t, R}f|^p\Id_{B}\lessapprox \int |S_j f|^p\Id_{B}\,,
\end{equation}
whenever $t_j\in I$.
Moreover, 
\begin{equation}
    \int\sup_{t\in [1,2]}|S_{t, R}f|^p\Id_{B}\sim \sup_{I\subset[1,2]}\int\sup_{t\in I}|S_{t, R}f|^p\Id_{B}.
\end{equation}

For each unit ball $B$, define $j_B$ as the minimal choice of $j\in \{1, \cdots, [R]\}$ so that 
\begin{equation}
    t_j\in I \text{ and } \sup_{t\in I}\int|S_{t, R}f|^p\Id_{B}\sim\int\sup_{t\in [1,2]}|S_{t, R}f|^p\Id_{B}.
\end{equation}
Now define $F_j$ as a union of unit balls $B$ with $j_B=j$. This gives
\begin{equation*}
    \int\sup_{t\in [1,2]}|S_{t, R}f|^p\lessapprox\int\sum_j|S_jf|^p\Id_{F_j}\,,
\end{equation*}
as desired.
\end{proof}

\bigskip

\subsection{An $L^2$ estimate localized to a single ball}

We aim to present a local $L^2$-estimate associated with a single ball. 
Before stating it, let us introduce some notation.  
%\begin{definition}
%We call an interval $\tau\subset\ZS^1$ a {\bf $\rho$-cap} if the Lebesgue measure of $\tau$ is $\sim \rho$. We also define  
%\begin{equation}
%    C_\tau:=\{\xi\in\ZR^2:\xi/|\xi|\in\tau\}
%\end{equation}
%as the conic region determined by $\tau$.
%\end{definition}
Recall that $\tau$ stands for a cap defined as in Definition \ref{def1.7}. 
Let $\varphi_\tau$ be a bump function of the interval 
\begin{equation}
    \{\xi_1\in\mathbb R: N(\xi_1, \xi_2)\in\tau \}
\end{equation}
so that $\{\vp_\tau\}$ forms a smooth partition of unity of $[-1,1]$.
We define $S_{\tau, j}$ by 
\begin{equation}
\label{s-tau-j}
    \wh{S_{\tau,j}f}(\xi) = \wh{S_j f}(\xi)\varphi_\tau(\xi_1)\,,
\end{equation}
for $\xi=(\xi_1, \xi_2)$. The function $\varphi_\tau$ helps localizing smoothly the operator $S_jf$ on the cap $\tau$ in the frequency space.  
Thus, the Fourier transform of $S_{\tau, j}f$ is supported around a $1/R$-neighborhood of the cap $\tau$. 
The operator $S_{\tau, j}$ may vary from line to line depending on the change of the bump functions $a$ and $\varphi_\tau$, but such variation is harmless to our argument. 
For a ball $B$ in the plane, let $\psi_B$ be a bump function on $2B$ so that  $\psi_B(x)\sim 1$  for $x\in B$ and $|\psi_B^{(k)}|\lesssim_k r(B)^{-k}$ for any $k\in\mathbb N$.

\smallskip

We need to figure out what happens when the function $f$ is localized on a single ball  . 
This scenario can be addressed by the following lemmas,  the first of which is essentially the local $L^2$ estimate proved in \cite{Tao-MBR}.

\begin{lemma}
\label{local-L2-0}
Let $B$ be an $R\alpha$-ball with $0\leq \alpha\leq 1$. 
For any positive integer $j$ and $f_j\in L^2$,   
\begin{equation}
\label{local-l2-2}
    \big\|\sum_{j}S_{j}(f_j)\psi_B\big\|_{2}^2\lessapprox\al\big\|\sum_{j}|f_j|\big\|_2^2.
\end{equation}
\end{lemma}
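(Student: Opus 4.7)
The plan is to prove Lemma \ref{local-L2-0} by a block decomposition of the index set $\{j\}$ combined with a wave packet analysis on tubes of scale $R^{1/2}\times R$, following the strategy of \cite{Tao-MBR}.

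First, partition $\{j\}$ into blocks $J_k$ of size $\sim \alpha^{-1}$, so that within each block $\{t_j\}_{j\in J_k}$ lies in an interval of length $\sim (R\alpha)^{-1}$. Since $\widehat{\psi_B}$ is essentially a bump of scale $(R\alpha)^{-1}$, the smeared Fourier supports of $\psi_B\sum_{j\in J_k}S_j f_j$ for different $k$ have bounded overlap, yielding block orthogonality
\begin{equation*}
\Big\|\sum_j S_j f_j\,\psi_B\Big\|_2^2 \lesssim \sum_k \Big\|\sum_{j\in J_k}S_j f_j\,\psi_B\Big\|_2^2.
\end{equation*}
Combined with the pointwise bound $(\sum_j|f_j|)^2 \geq \sum_k(\sum_{j\in J_k}|f_j|)^2$, the lemma reduces to the within-block estimate
\begin{equation*}
\Big\|\sum_{j\in J_k}S_j f_j\,\psi_B\Big\|_2^2 \lessapprox \alpha \Big\|\sum_{j\in J_k}|f_j|\Big\|_2^2.
\end{equation*}

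For the within-block estimate, I would decompose each $S_j f_j$ via wave packets on tubes: $S_{\theta,j} f_j = \sum_T a_T^{j,\theta}\,\phi_T\,e^{it_j N(\theta)\cdot x}$, where $\theta$ ranges over $R^{-1/2}$-caps and $T$ over tubes of dimensions $R^{1/2}\times R$ in direction $N(\theta)^\perp$. For each tube through $2B$, the intersection $T\cap B$ has length $\sim R\alpha$ rather than $R$, so each wave packet $\phi_T\psi_B$ carries only an $\alpha$-fraction of its total $L^2$ mass. Moreover, within a block one has $|t_j - t_{j'}|\leq (R\alpha)^{-1}$, so the phase $(t_j-t_{j'})N(\theta)\cdot x$ varies by $O(1)$ over $x\in B$; this near-coherence allows the $\alpha^{-1}$ wave packets sharing a tube $T$ across $j\in J_k$ to be summed with only polylog loss, via a Plancherel computation of $\int_0^{R\alpha}|\sum_{j\in J_k}a_T^{j,\theta}e^{it_j r}|^2\,dr$. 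Combining with $L^2$-orthogonality across tubes (same $\theta$) and across caps $\theta$ (using $\alpha\ge R^{-1/2}$, or a polylog argument for smaller $\alpha$), and then bounding $\sum_{T,\theta}|\sum_{j\in J_k}a_T^{j,\theta}|^2$ in terms of $\|\sum_{j\in J_k}|f_j|\|_2^2$, completes the estimate.

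The main obstacle is the within-block step: the $\sim \alpha^{-1}$ indices in each $J_k$ would, via a naive triangle inequality, lose a factor of $\alpha^{-1}$ that exactly cancels the $\alpha$ gain from the spatial truncation $\psi_B$. The key is that within a block the modulation phases $e^{it_j N(\theta)\cdot x}$ vary only by $O(1)$ on $B$, so the individual wave packets from different $j\in J_k$ combine almost coherently rather than incoherently. Extracting the correct $\alpha$ from this near-coherence, while simultaneously preserving the right bound in terms of $\|\sum_{j\in J_k}|f_j|\|_2^2$ (and not the much larger quantity $\sum_j\|f_j\|_2^2$ that $L^2$-orthogonality alone would produce), is the delicate analysis of \cite{Tao-MBR}.
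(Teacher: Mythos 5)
Your first reduction is sound: splitting $\{j\}$ into blocks $J_k$ of size $\sim\alpha^{-1}$ (so that the $t_j$ in a block span an interval of length $(R\alpha)^{-1}$), using almost-orthogonality of the $(R\alpha)^{-1}$-smeared annuli after multiplication by $\psi_B$, and using the pointwise inequality $\sum_k\big(\sum_{j\in J_k}|f_j|\big)^2\le\big(\sum_j|f_j|\big)^2$ are all correct. The genuine gap is the within-block estimate, which carries the entire content of the lemma, and the mechanism you sketch for it cannot close. Any bookkeeping based on wave-packet decomposition plus $L^2$-orthogonality across tubes and caps tracks only per-$j$ quantities such as $\sum_{T,\theta}|a_T^{j,\theta}|^2\sim\|f_j\|_2^2$, and so it naturally outputs a bound of the shape $\alpha\sum_{j\in J_k}\|f_j\|_2^2$ --- which is false. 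Indeed, let $\phi$ be an $L^2$-normalized wave packet on a single $R^{1/2}\times R$ tube through $B$ (long axis in the radial direction $N(\theta)$, not $N(\theta)^\perp$ as you wrote), and set $f_j(x)=e^{it_j\omega\cdot(x-x_B)}\phi(x)$ for all $j\in J_k$, so that $\widehat{f_j}$ sits in $A_{t_j,R}$ and $S_jf_j\approx f_j$. Since $|t_j-t_{j'}|\le(R\alpha)^{-1}$ and ${\rm diam}(B)\sim R\alpha$, the modulations are coherent on $B$, so $\|\sum_{j\in J_k}S_jf_j\,\psi_B\|_2^2\gtrsim\alpha^{-2}\cdot\alpha\|\phi\|_2^2=\alpha^{-1}$, whereas $\alpha\sum_{j\in J_k}\|f_j\|_2^2\sim1$. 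The true right-hand side is saved only because the inputs overlap pointwise: $\|\sum_{j\in J_k}|f_j|\|_2^2\sim\alpha^{-2}$. Thus the step you describe as ``bounding $\sum_{T,\theta}|\sum_{j\in J_k}a_T^{j,\theta}|^2$ in terms of $\|\sum_{j\in J_k}|f_j|\|_2^2$'' must encode the fact that coherent output forces physical overlap of the $|f_j|$'s; nothing in your sketch (the Plancherel computation of $\int_0^{R\alpha}|\sum_j a_T^{j}e^{it_jr}|^2\,dr$ only quantifies output coherence) supplies this, and you explicitly defer it to the ``delicate analysis'' of \cite{Tao-MBR}. As it stands, the crux of the lemma is assumed rather than proved.

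For comparison, the paper's proof never introduces wave packets and gets the non-orthogonal right-hand side automatically. It writes the kernel of $S_j$ by stationary phase as $\approx R^{-1}|x|^{-1/2}e^{it_j|x|}$, decomposes dyadically in $|x|\sim r$, localizes the input to $r$-balls, freezes one variable, and then estimates the $TT^*$ cross-kernel $K_{\tau,j_1,j_2}(z_1,z_2)$ by integration by parts in $x$ over $B$, using the elementary bound $\nabla_x\big(t_{j_1}|x-z_1|-t_{j_2}|x-z_2|\big)\gtrsim r^{-1}|z_1-z_2|$; the resulting kernel bound is uniform in $(j_1,j_2)$, so by positivity the double sum over $j_1,j_2$ is dominated by $\big(\sum_j|f_j|\big)(z_1)\big(\sum_j|f_j|\big)(z_2)$ against a single kernel, and Schur's test yields exactly the factor $\alpha$ in front of $\|\sum_j|f_j|\|_2^2$. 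That uniformity-plus-positivity step at the kernel level is precisely the ingredient your orthogonality-based plan is missing; without a substitute for it, the proposal has a genuine gap.
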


\begin{proof} 
We will utilize the $TT^*$-method to prove Lemma \ref{local-L2-0}. 
Via the method of stationary phase, the kernel of the operator $S_j$, denoted by $K_j$, exhibits the following asymptotic behavior (see for instance \cite{Stein-Harmonic-Analysis} Chapter IX)
\begin{equation}
    K_j (x)= \varphi(R^{-1}x)R^{-1}|x|^{-1/2}\big(e^{it_j|x|}\sum_{j=0}^{\infty}a_j|x|^{-j}+e^{-it_j|x|}\sum_{j=0}^\infty b_j|x|^{-j}\big),
\end{equation}
where $\varphi$ is a smooth function, $a_j$ and $b_j$ are constants depending on $j$.  The principal contribution arises from 
the first term when $j=0$.  
Thus, it suffices to prove 
\begin{equation}
\label{local-l2-02}
    \big\|\sum_{j}S'_{j}(f_j)\psi_B\big\|_{2}^2\lessapprox\al\big\|\sum_{j}|f_j|\big\|_2^2,
\end{equation}
where $S'_j f= K'_j*f$ with 
\begin{equation}
\label{K-j'}
    K'_j(x) = \varphi(R^{-1}x)R^{-1}|x|^{-1/2} e^{it_j|x|}\,. 
\end{equation}

For a dyadic number $1\leq r\lesssim R$, let $\eta_r(x)$ be a bump function of the dyadic annulus $\{x: |x|\sim r\}$, and let $\eta_0$ be a bump function of the unit ball so that $\{\eta_0, \eta_r: r\text{ dyadic}\}$ forms a partition of unity of the whole space.
Again, the bump functions $\{\eta_r\}$ may vary from line to line, but they remain harmless to our argument.
Thus,
\begin{equation}
    K'_j=\eta_0 K'_j+\sum_{r} \eta_r K'_j. 
\end{equation}
For each $r$, define 
\begin{equation}
\label{vp-j-r}
    K_{j,r}(x):=\eta_r(x) K_j'(x) \,,
\end{equation}
which, by \eqref{K-j'}, is equal to $ R^{-1}r^{-1/2}\eta_r(x)e^{it_j|x|}$. 
Moreover, we define $S_{j,r}f:=K_{j,r}\ast f$. 
%Since $K_j(x)$ decays rapidly when $x\gg R$, to prove \eqref{local-l2-1}, it suffices to prove for each $r$ that
%\begin{equation}
%\label{local-l2-3}
%    \big\|\sum_{j}S_{j,r}(f_j)\psi_B\big\|_{2}^2\lesssim\al\big\|\sum_{j}|f_j|\big\|_2^2.
%\end{equation}
Since $K_{j,r}$ is supported in  $\{x:|x|\sim r\}$, by a standard localization argument (see \cite{Wu-BR} Lemma 2.2), it suffices to prove that for any $r$-ball $B_r$,
\begin{equation}\label{local-l2-3}
\big\|\sum_{j}S_{j,r}(f_j\Id_{B_r})\psi_B\big\|_{2}^2\lesssim\al\big\|\sum_{j}|f_j|\Id_{B_r}\big\|_2^2.
\end{equation}

\smallskip
Without loss of generality,  we can confine the variable $x$ in \eqref{vp-j-r} to the conic region  $\{x:0\leq x/|x|\leq \pi/4\}$.
For any fixed $y_1\in[-R,R]$, we define $g_{j,y_1}(\cdot)=|f_j(y_1,\cdot)|\Id_{B_r}(y_1,\cdot)$.
Hence, by freezing the first variable $y_1$ and Cauchy-Schwarz, it suffices to prove for all $y_1$,
\begin{equation}
\label{freezing}
    \big\|\sum_{j}S_{\tau, j}(g_{j,y_1})\psi_B\big\|_{2}^2\lesssim \al r^{-1}\big\|\sum_{j}g_{j, y_1}\big\|_2^2.
\end{equation}
Assume $y_1=0$ without loss of generality.
Apply \eqref{vp-j-r} so that the left-hand side of \eqref{freezing} equals to 
\begin{equation}
    \sum_{j_1,j_2}\int_{\ZR^2}g_{j_1}(z_1)g_{j_2}(z_2)K_{\tau,j_1,j_2}(z_1,z_2)dz_1dz_2\,,
\end{equation}
where $K_{j_1,j_2}(z_1,z_2)$ is defined by
\begin{equation}
    K_{j_1,j_2}(z_1,z_2):= \frac{1}{rR^2}\int e^{i(t_{j_1}|x-\bar z_1|-t_{j_2}|x-\bar z_2|)}\psi_B^2(x)a_{j_1,j_2}(x,z_1,z_2)dx\,.
\end{equation}
Here $\bar z_1=(0,z_1)$ and $\bar z_2=(0,z_2)$, and $a_{j_1,j_2}$ is a suitable amplitude function that vanishes unless $|x-\bar z_1|, |x-\bar z_2|\sim R$.

Since $|x|\sim r$ and since $0\leq x/|x|\leq \pi/4$, the first coordinate of $x$ has size $\sim r$.
Thus, by elementary geometry, we have $\nabla_x(t_{j_1}|x-\bar z_1|-t_{j_2}|x-\bar z_2|)\gtrsim r^{-1}|z_1-z_2|$.
Since the $N$-th derivative of the amplitude function $\psi_B^2a_{j_1,j_2}$ is $O_N(\min\{r,|B|^{1/2}\}^{-N})$, from the integration by parts, we end up with 
\begin{align}
    |K_{\tau,j_1,j_2}(z_1,z_2)|&\lesssim_N \frac{1}{rR^2}\min\{|B|,r^2\}(1+(r^{-1}|B|^{1/2}+1)|z_1-z_2|)^{-N}\\
    &=\min\{\frac{\al^2}{r},\frac{r}{R^2}\}(1+(\al(R/r)+1)|z_1-z_2|)^{-N}.
\end{align}
This yields $\int|K_{j_1,j_2}(z_1,z_2)|dz_1, \int |K_{j_1,j_2}(z_1,z_2)|dz_2\lesssim\min\{\frac{\al^2}{r},\frac{\al}{R}\}$, which concludes \eqref{freezing} by Schur test since $r\leq R$ and $\al\leq 1$. 
This establishes \eqref{local-l2-3} and consequently \eqref{local-l2-2}, as desired. 
\end{proof}

\begin{lemma}
\label{local-L2}
Let $\al\in[R^{-1/2},1]$. Suppose that $B$ is an $R\al$-ball and $\{\tau\}$ is a collection of finitely overlapping $\al$-caps. Additionally, assume that $\{E_{\tau,j}\}_{\tau,j}$ is a collection of disjoint sets
in the plane. 
Then, for any $f\in L^2$,
\begin{equation}
    \sum_{\tau}\sum_{j}\big\|S_{\tau, j}(f\Id_{ B})\Id_{E_{\tau,j}}\big\|_2^2\lessapprox \al\big\|f\Id_B\big\|_2^2.
\end{equation}

\end{lemma}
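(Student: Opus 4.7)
The plan is to use duality together with the factorization $S^*_{\tau,j}=S^*_j P^*_\tau$ (where $P_\tau$ denotes the Fourier multiplier with symbol $\varphi_\tau(\xi_1)$) to reduce the claim to an instance of Lemma \ref{local-L2-0}. First, because the sets $\{E_{\tau,j}\}$ are pairwise disjoint, all cross-terms vanish and
\begin{equation*}
\sum_{\tau,j}\big\|S_{\tau,j}(f\Id_B)\Id_{E_{\tau,j}}\big\|_2^2 = \Big\|\sum_{\tau,j}S_{\tau,j}(f\Id_B)\Id_{E_{\tau,j}}\Big\|_2^2,
\end{equation*}
so by duality and Cauchy--Schwarz the lemma reduces to showing that for every $h\in L^2$,
\begin{equation*}
\Big\|\sum_{\tau,j}S^*_{\tau,j}(h\Id_{E_{\tau,j}})\Big\|_{L^2(B)}^2 \lessapprox \alpha\,\|h\|_2^2.
\end{equation*}

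Using $S_{\tau,j}=P_\tau S_j$, we have $S^*_{\tau,j}=S^*_j P^*_\tau$, and grouping the sum by $j$ yields $\sum_{\tau,j}S^*_{\tau,j}(h\Id_{E_{\tau,j}})=\sum_j S^*_j H_j$ with $H_j:=\sum_\tau P^*_\tau(h\Id_{E_{\tau,j}})$. The adjoint analogue of Lemma \ref{local-L2-0} (which holds by the same $TT^*$ argument, since the kernel of $S^*_j$ differs from that of $S_j$ only by complex conjugation) then gives
\begin{equation*}
\Big\|\sum_j S^*_j H_j\Big\|_{L^2(B)}^2 \lessapprox \alpha\,\Big\|\sum_j|H_j|\Big\|_2^2,
\end{equation*}
so the lemma further reduces to proving $\|\sum_j|H_j|\|_2^2 \lesssim \|h\|_2^2$.

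For the latter, bound pointwise $|P^*_\tau(h\Id_{E_{\tau,j}})|\leq |\check\varphi_\tau|*(|h|\Id_{E_{\tau,j}})$, and sum in $j$ using that $\{E_{\tau,j}\}_j$ are disjoint for each fixed $\tau$, which collapses the sum to $|\check\varphi_\tau|*(|h|\Id_{F_\tau})$ with $F_\tau:=\bigcup_j E_{\tau,j}$. The full disjointness hypothesis further implies that the $\{F_\tau\}_\tau$ are pairwise disjoint, and the target bound becomes $\|\sum_\tau |\check\varphi_\tau|*(|h|\Id_{F_\tau})\|_2^2\lesssim \|h\|_2^2$.

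The main obstacle is this last inequality: a naive triangle inequality in $\tau$ loses a factor of $(\#\tau)^{1/2}\sim\alpha^{-1/2}$, since absolute values destroy the Fourier orthogonality between the $P^*_\tau$. The sharp estimate must instead exploit the pointwise disjointness of $\{F_\tau\}$ together with the rapid $\alpha$-scale decay of $\check\varphi_\tau$ (which has $L^1$-norm $\sim 1$ and is essentially supported in a ball of radius $\alpha^{-1}$). Concretely, after expanding $\|\cdot\|_2^2$ into diagonal and off-diagonal contributions, each cross term $\int (|\check\varphi_\tau|*(|h|\Id_{F_\tau}))\cdot (|\check\varphi_{\tau'}|*(|h|\Id_{F_{\tau'}}))$ is controlled via the Schwartz decay of $\check\varphi_\tau$ and the geometric fact that $F_\tau\cap F_{\tau'}=\varnothing$, so that only pairs $(\tau,\tau')$ with $F_\tau, F_{\tau'}$ at distance $\lesssim\alpha^{-1}$ contribute meaningfully; summing these controlled pairs recovers $\|h\|_2^2$ up to acceptable logarithmic losses.
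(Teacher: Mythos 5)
Your reductions up to the last step are sound and run parallel to the paper's proof: the paper also dualizes, also uses the factorization of $S_{\tau,j}$ through $S_j$ (writing $S_{\tau,j}f=S_j(\wt S_{\tau,j}f)$) so as to invoke Lemma \ref{local-L2-0}, and also must then control an $L^2$ norm of a sum of absolute values of frequency-localized pieces; the only structural difference is that the paper first uses $L^2$ orthogonality in $\tau$ to reduce to a single cap, while you keep the $\tau$-sum. The genuine gap is in the step you yourself flag as ``the main obstacle'': you leave the inequality $\big\|\sum_\tau |\check\varphi_\tau|\ast(|h|\Id_{F_\tau})\big\|_2^2\lesssim\|h\|_2^2$ unproved, and the mechanism you sketch for it would not close. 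Since every term is nonnegative there is no cancellation to exploit, an individual cross term $\int(|\check\varphi_\tau|\ast(|h|\Id_{F_\tau}))\,(|\check\varphi_{\tau'}|\ast(|h|\Id_{F_{\tau'}}))$ is \emph{not} small merely because $F_\tau\cap F_{\tau'}=\varnothing$ (take two adjacent sets inside one $\alpha^{-1}$-ball: after averaging at scale $\alpha^{-1}$ their images overlap substantially), and nothing prevents all the $E_{\tau,j}$ from lying in a single $\alpha^{-1}$-ball, in which case there are $\sim\alpha^{-2}$ pairs ``at distance $\lesssim\alpha^{-1}$''. A pair-by-pair bound using only Schwartz decay and disjointness therefore loses up to a factor $\alpha^{-1}$, which is exactly the loss you were trying to avoid.

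The inequality you need is nevertheless true, and the correct way to finish is the same device the paper uses (its observation that $\sup_{\tau,j}|\wt K_{\tau,j}|\in L^1$): all the $\varphi_\tau$ are modulations of essentially one bump at scale $\alpha$, so $|\check\varphi_\tau|\leq\Phi:=\sup_{\tau'}|\check\varphi_{\tau'}|$ with $\|\Phi\|_{L^1}\lesssim1$ uniformly in $\tau$. Then, using the pointwise bound $\sum_\tau\Id_{F_\tau}\leq1$ coming from the full disjointness of $\{E_{\tau,j}\}_{\tau,j}$,
\begin{equation*}
\sum_\tau |\check\varphi_\tau|\ast\big(|h|\Id_{F_\tau}\big)\;\leq\;\Phi\ast\Big(|h|\sum_\tau\Id_{F_\tau}\Big)\;\leq\;\Phi\ast|h|,
\end{equation*}
and Young's inequality gives $\|\Phi\ast|h|\|_2\lesssim\|h\|_2$ with no loss in $\alpha$. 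In particular your concern about losing $(\#\tau)^{1/2}$ through destroyed Fourier orthogonality is misplaced: no orthogonality among the $P_\tau$ is needed at this stage, because the factor $\alpha$ has already been harvested in the application of (the adjoint form of) Lemma \ref{local-L2-0}; what remains is purely an $L^1$-type majorization exploiting pointwise disjointness. With this substitution your argument is complete and is essentially a reorganized version of the paper's proof.
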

\begin{proof}
 By duality, it suffices to show that, for any $f\in L^2$, 
\begin{equation}
\label{dual-L2}
    \big\|\sum_\tau\sum_{j}S_{\tau, j}(f\Id_{E_{\tau,j}})\big\|_{L^2(B)}^2\lessapprox\al\sum_{\tau}\sum_j\big\|f\Id_{E_{\tau,j}}\big\|_2^2.
\end{equation}
Since $\al\geq R^{-1/2}$ and since the Fourier transform of $\sum_{j}S_{\tau, j}(f\Id_{E_{\tau,j}})$ is supported in an $\al\times 1$-strip with direction $\tau$, similar to Lemma \ref{l2-ortho-lem}, we have
\begin{equation}
    \big\|\sum_\tau\sum_{j}S_{\tau, j}(f\Id_{E_{\tau,j}})\big\|_{L^2(B)}^2\lesssim\sum_\tau\big\|\sum_{j}S_{\tau, j}(f\Id_{E_{\tau,j}})\big\|_{L^2(2B)}^2.
\end{equation}
Hence, to prove \eqref{dual-L2}, we only need to prove for a single $\tau$,
\begin{equation}
\label{dual-L2-single}
    \big\|\sum_{j}S_{\tau, j}(f\Id_{E_{j}})\big\|_{L^2(B)}^2\lessapprox\al\sum_{j}\big\|f\Id_{E_{j}}\big\|_2^2,
\end{equation}
where $\{E_j\}_j$ is a collection of disjoint sets. It is more convenient to replace $\Id_B$ by a smooth cut-off $\psi_B$, 
so we will demonstrate the following slightly stronger inequality: 
\begin{equation}
\label{local-l2-1}
    \big\|\sum_{j}S_{\tau, j}(f_j)\psi_B\big\|_{2}^2\lessapprox\al\sum_{j}\big\|f_j\big\|_2^2,
\end{equation}
where $f_j\in L^2$ and  the supports of $f_j$'s are disjoint.   

\smallskip 

Observe that 
\begin{equation}
\label{s-tau-j-tilde}
    S_{\tau,j}f=S_{j}(\wt S_{\tau,j }f),
\end{equation}
where ${\wt S}_{\tau, j}f$ can be represented as $\wt K_{\tau, j}*f$ such that 
the Fourier transform of the kernel $\wt K_{\tau, j}$ is a smooth bump function supported in an $2\alpha$-ball containing the cap 
$\tau$. Thus, the kernel $\wt K_{\tau, j}$ obeys the decaying estimate,
\begin{equation}
\label{k-tau-j-tilde}
    |\wt K_{\tau,j}(x)|\lesssim_N \al^2(1+\al |x|)^{-N},
\end{equation}
which yields that $\sup_{\tau, j}\big|\wt K_{\tau, j}| \in L^1$.

\smallskip

Apply Lemma \ref{local-L2-0} with $f_j=\wt S_{\tau,j }(f\Id_{E_j})$ so that
\begin{equation}
    \big\|\sum_{j}S_{\tau, j}(f\Id_{E_j})\psi_B\big\|_{2}^2\lessapprox\al\big\|\sum_{j}|\wt S_{\tau,j }(f\Id_{E_j})|\big\|_2^2.
\end{equation}
Thus, to prove \eqref{local-l2-1}, we only need to show
\begin{equation}
    \big\|\sum_{j}|\wt S_{\tau,j }(f_j)|\big\|_2^2\lesssim\sum_{j}\big\|f_j\big\|_2^2\,,
\end{equation}
for $f_j$'s with disjoint supports.  
In fact, because $\sup_{\tau,j}|\wt K_{\tau,j}|$ is integrable via \eqref{k-tau-j-tilde},  
\begin{align}
    \big\|\sum_{j}|\wt S_{\tau,j }(f_j)|\big\|_2^2&\lesssim \big\|(\sup_{\tau,j}|\wt K_{\tau,j}|)\ast \sum_{j}|f_j|\big\|_2^2\\
    &\lesssim\big\|\sum_{j}|f_j|\big\|_2^2=\sum_{j}\big\|f_j\big\|_2^2.
\end{align}
In the last equality, we use the fact that the supports of $f_j$'s are disjoint. 
This gives \eqref{local-l2-1}, and therefore
we complete the proof of Lemma \ref{local-L2}. 
\end{proof}

\begin{remark}
\rm

Another possible approach to prove \eqref{local-l2-1} is as follows: We first derive the explicit expression of the kernel of $S_{\tau,j}$ using the method of stationary phase. Then, we conclude \eqref{local-l2-1} via a similar idea in \cite{Tao-MBR}. However, the explicit expression of the kernel of $S_{\tau,j}$ is not straightforward since it depends on the bump function 
$\varphi_\tau$. We anticipate that proving \eqref{local-l2-1} by using this method would require more effort.

\end{remark}

\bigskip

\subsection{$L^4$ orthogonality}

In this subsection, we examine the orthogonality in $L^4$ concerning functions whose Fourier transforms are distributed around a neighborhood of the circle $ \{\xi\in\mathbb R^2: |\xi|=1\}$. Recall that $\{t_1, t_2, \cdots, t_{[R]}\}$ is a collection of $1/R$-separated real numbers in $[1, 2]$.  For given $j\in [1, R]\cap\mathbb Z$ and a cap $\tau$,  let $f_{\tau, j}$ be a function whose Fourier transform is supported in  
$C_\tau \cap A_{t_j, R}$, where $C_\tau$ is the conic region of the cap $\tau$ defined as in  (\ref{Def1.6})
and $A_{t_j, R}$ is the annulus given by (\ref{aj}).  We use $\mathcal J_\tau$ to denote a subset of 
$ [1, R]\cap \mathbb Z$ which may depend on $\tau$. For any interval $I$ in $[1/2, 2]$ and any cap $\tau$, we define
\begin{equation}
\cj_{\tau, I}=\{j\in\cj_\tau:t_j\in I\} \,. 
\end{equation}

\begin{lemma}
\label{l4-ortho-lem-1}
Let $\al<\bar\alpha$ be two numbers in $[R^{-1/2},1]$, and $\tau_0,\tau_0'$ be two $\bar\alpha$-caps with 
${\rm dist}(\tau_0, \tau'_0) \lessapprox \bar\alpha$.
Given an $R\al$-ball $B$, let $\cT_B,\cT_B'$ be two collections of finitely overlapping $\al$-caps with $\#\cT_B, \#\cT_B'\lesssim \mu_1$ such that $\tau\subset\tau_0$, $\tau'\subset\tau_0'$ for any $\tau\in\cT_B$, $\tau'\in\cT_B'$. 
Let $\mathcal I= \{I\}$ be a finitely overlapping $\al\bar\alpha^{-1}$-intervals in $[1/2,2]$.  
Suppose also that there is a number $\mu_2$ such that for each $\tau\in\cT_B\cup\cT_B'$, $\#\{I\in \mathcal I:\cj_{\tau,I}\not=\varnothing\}\lesssim \mu_2$. 
Then for any $f\in L^2$,
\begin{align}
    &\int_B\big|\sum_{\tau\in\cT_B}\sum_{j\in\cj_{\tau}}f_{\tau,j}\big|^2\big|\sum_{\tau'\in\cT_B'}\sum_{j'\in\cj_{\tau'}}f_{\tau',j'}\big|^2\\
    \lessapprox &\, \mu_1\mu_2\sum_{\tau\in\cT_B}\sum_{\tau'\in\cT_B'}\sum_{I,I'\in\mathcal I}\int_{2B}\big|\sum_{j\in\cj_{\tau,I}}f_{\tau,j}\big|^2\big|\sum_{j'\in\cj_{\tau',I'}}f_{\tau',j'} \big|^2.
\end{align}

\end{lemma}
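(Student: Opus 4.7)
The plan is to interpret $\int_B |F|^2|G|^2 = \|F\bar G\|_{L^2(B)}^2$ (with $F := \sum_{\tau \in \mathcal T_B}\sum_{j \in \cj_\tau} f_{\tau, j}$ and analogously for $G$) and exploit approximate $L^2$-orthogonality of the products $F_{\tau, I}\bar G_{\tau', I'}$, where $F_{\tau, I} := \sum_{j\in\cj_{\tau, I}} f_{\tau, j}$ so that $F = \sum_{\tau, I} F_{\tau, I}$. This is in the spirit of C\'ordoba's $L^4$ orthogonality for the circle, adapted to the coarser scale $\alpha \times (\alpha/\beta)$ forced on us by grouping $j \in \cj_{\tau, I}$.

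After replacing $\Id_B$ by a Schwartz bump $\psi_B$ adapted to $2B$, I would expand $F\bar G \psi_B = \sum_{\tau, I, \tau', I'} F_{\tau, I}\bar G_{\tau', I'}\psi_B$. The Fourier transform of $F_{\tau, I}$ is supported in a curvilinear $\alpha \times (\alpha/\beta)$ rectangle $R_{\tau, I} \subset C_\tau \cap \{|\xi|\in I\}$ centered near $t_I\hat\tau$ (where $t_I$ is the center of $I$ and $\hat\tau$ is the center direction of $\tau$), and analogously for $G_{\tau', I'}$. Since $\hat\psi_B$ lives essentially inside a ball of radius $(R\alpha)^{-1}\ll\alpha$, the Fourier support of $F_{\tau, I}\bar G_{\tau', I'}\psi_B$ lies inside a mild blur of $R_{\tau, I} - R_{\tau', I'}$, a parallelogram of dimensions $\sim 2\alpha \times 2(\alpha/\beta)$ centered near $c(\tau, I; \tau', I') := t_I\hat\tau - t_{I'}\hat{\tau'}$.

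The crux of the argument is the bucket-counting claim: for any $v\in\mathbb R^2$, the number of quadruples $(\tau, I, \tau', I')$ with $|c(\tau, I; \tau', I') - v| = O(\alpha)$ tangentially and $O(\alpha/\beta)$ radially is $O(\mu_1\mu_2)$. To verify this, rotate so that $\tau_0$ is centered at angle $0$ and $\tau_0'$ at some angle $\delta = O(\beta)$, and write $\hat\tau = (\cos\theta, \sin\theta)$, $\hat{\tau'} = (\cos(\delta + \theta'), \sin(\delta + \theta'))$ with $\theta, \theta' \in [-\beta/2, \beta/2]$. To leading order,
\[
c_{\text{rad}} \approx t - t', \qquad c_{\text{tang}} \approx t\theta - t'(\delta + \theta'),
\]
with $O(\beta^2)$ curvature corrections in the radial slot. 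Fixing the bucket fixes both components; having chosen $(\tau', I')$ freely (at most $\mu_1\mu_2$ options by hypothesis), the radial equation pins $t = t_I$ to $O(1)$ grid values and the tangential equation then pins $\theta = \theta_\tau$ to $O(1)$ grid values. Hence each bucket contains at most $O(\mu_1\mu_2)$ valid quadruples.

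With the bucket count in hand, I would partition the index set accordingly. Products in distinct buckets have essentially disjoint Fourier supports, so they sum almost $L^2$-orthogonally by Plancherel (absorbing an $R^\epsilon$ loss from the $\hat\psi_B$ blur and bucket-boundary overlaps into $\lessapprox$). Within each bucket, Cauchy--Schwarz contributes exactly the bucket size $\mu_1\mu_2$, yielding
\[
\|F\bar G\|_{L^2(B)}^2 \lessapprox \mu_1\mu_2 \sum_{\tau, I, \tau', I'} \|F_{\tau, I}\bar G_{\tau', I'}\psi_B\|_2^2 \leq \mu_1\mu_2 \sum_{\tau, I, \tau', I'} \int_{2B} |F_{\tau, I}|^2|G_{\tau', I'}|^2,
\]
which is the target inequality. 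The main obstacle is the bucket-counting step: one has to track the $O(\beta^2)$ curvature correction in $c_{\text{rad}}$ and confirm it fits inside the tolerance $\alpha/\beta$, handling the regime $\beta^3 \gtrsim \alpha$ (where curvature in fact rigidifies the difference map, so one can sub-bucket) and the near-degenerate configuration $\tau_0 = \tau_0'$ (where the tangent/radial splitting needs extra care) by separate arguments.
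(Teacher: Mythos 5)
Your overall architecture is the same as the paper's: replace $\Id_B$ by a Schwartz bump whose Fourier transform lives in an $(R\al)^{-1}$-ball, group the pieces into $F_{\tau,I}=\sum_{j\in\cj_{\tau,I}}f_{\tau,j}$ with Fourier support in an $\al\times\al\be^{-1}$ curvilinear rectangle, and win by almost-orthogonality of the products plus Cauchy--Schwarz with an overlap count of $O(\mu_1\mu_2)$; the paper phrases the count as ``for a fixed quadruple, at most $\lessapprox\mu_1\mu_2$ quadruples have intersecting Minkowski sums,'' and your bucket formulation (with $F\bar G$ and difference sets instead of $FG$ and sum sets) is equivalent. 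The genuine gap is exactly where you flag it: the verification of the bucket count. After fixing the bucket and $(\tau',I')$, you claim the radial equation pins $t=t_I$ to $O(1)$ grid values; but the radial component of $t\hat\tau$ is $t\cos\theta=t+O(\be^2)$ with $\theta$ still unknown, so the radial constraint alone only confines $t$ to an interval of length $O(\al\be^{-1}+\be^2)$, i.e.\ to $O(1+\be^3/\al)$ choices of $I$. The regime $\be^3\gg\al$ is not marginal: in the application (Case I in the proof of Proposition \ref{l4/3-prop}) the role of $\be$ is played by $\al_1$, which can be $\sim 1$ while $\al$ is as small as $R^{-1/2}$. One also cannot run the tangential equation first, since there $t$ is unknown. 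You acknowledge this and defer to unspecified ``separate arguments'' (sub-bucketing, special treatment of $\tau_0=\tau_0'$), so the crux of the proof is missing as written.

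The counting claim itself is true, and the repair is to use the two constraints jointly rather than sequentially: after fixing $(\tau',I')$, the condition is that the point $t_I\hat\tau$ (which has modulus $\sim1$) lies in a fixed box of dimensions $O(\al\be^{-1})$ in the direction of $\tau_0$ and $O(\al)$ transversally; pulling back under the polar map $(t,\theta)\mapsto t(\cos\theta,\sin\theta)$, whose differential on the relevant region is (up to a factor $t\sim1$) a rotation by the angle $\theta=O(\be)$, such a box is contained in an $O(\al\be^{-1})\times O(\al)$ box in the $(t,\theta)$-coordinates (the rotation contributes only $O(\be\cdot\al\be^{-1})=O(\al)$ to the short side), hence contains $O(1)$ of the grid points $(t_I,\theta_\tau)$. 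This is in effect what the paper does when it observes that all the rectangles point in essentially one direction, so that the Minkowski sums are essentially translates of a single $O(\al)\times O(\al\be^{-1})$ box. Note also that your worry about the configuration $\tau_0=\tau_0'$ is vacuous provided ``radial/tangential'' is measured in the fixed frame of $\tau_0$ (as your formulas already do) rather than at the difference point.
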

\begin{proof}
Let us define $F_{\tau,I}$ as
\begin{equation}
    F_{\tau,I}:=\sum_{j\in\cj_{\tau,I}}f_{\tau,j}.
\end{equation}
Thus, it suffices to show that 
\begin{align}
\label{before-plancherel}
    \int_B\big|\sum_{\tau\in\cT_B}\sum_{I\in\mathcal I}F_{\tau,I}\big|^2\big|\sum_{\tau'\in\cT_B'}\sum_{I'\in \mathcal I}F_{\tau',I'}\big|^2\lessapprox\mu_1\mu_2\sum_{\tau,\tau'}\sum_{I,I'}\int_{2B}|F_{\tau,I}|^2|F_{\tau',I'}|^2.
\end{align}
Observe that  $F_{\tau, I}$ has its Fourier support in $C_\tau \cap \bigcup_{j\in I} A_{t_j, R}$,  which is a subset of an $\al\bar\alpha^{-1}\times\al$-rectangle, denoted by $R_{\tau,I}$.

\smallskip

Let $\psi_B$ be a Schwartz function whose Fourier transform decays rapidly outside the ball of radius $(R\alpha)^{-1}$ centered at the origin, such that $\psi_B(x) \geq 1$ for all $x \in B$, and supported on $2B$.
Denote the left-hand side of \eqref{before-plancherel} by $\Lambda(\tau, \tau')$.
Thus,
\begin{align}
\label{before-CS}
    \Lambda(\tau,\tau')&\lesssim\int\big|\sum_{\tau\in\cT_B}\sum_{I}F_{\tau,I}\psi_B\big|^2\big|\sum_{\tau'\in\cT_B'}\sum_{I'}F_{\tau',I'}\psi_B\big|^2\\ \nonumber
    \lesssim\sum_{\tau_1,\tau_2,\tau_1',\tau_2'}\sum_{I_1,I_2,I_1',I_2'}\int& (\wh F_{\tau_1, I_1}\ast\wh\psi_B) \ast\overline{(\wh F_{\tau_1', I_1'}\ast\wh\psi_B)}\cdot \overline{(\wh F_{\tau_2, I_2}\ast\wh\psi_B)} \ast(\wh F_{\tau_2', I_2'}\ast\wh\psi_B),
\end{align}
where the sum is taken over all $(\tau_1, I_1), (\tau_1', I_1'), (\tau_2, I_2), (\tau_2', I_2')$ such that
\begin{equation}
\label{quadruples}
    (R_{\tau_1,I_1}+R_{\tau_1',I_1'})\cap (R_{\tau_2,I_2}+R_{\tau_2',I_2'})\not=\varnothing.
\end{equation}
Thus, by applying the Cauchy--Schwarz inequality to the integral on the right-hand side of \eqref{before-CS},
\begin{align}
    \Lambda(\tau,\tau')\lesssim&\sum_{\tau_1,I_1,\tau_1', I_1'}\bigg( \int|(\wh F_{\tau_1, I_1}\ast\wh\psi_B) \ast\overline{(\wh F_{\tau_1', I_1'}\ast\wh\psi_B)}|^2 \bigg)^{1/2}\\ 
    \label{sum-tau-I-2}
    &\cdot  \sum_{\tau_2,I_2,\tau_2', I_2'}\bigg( \int|(\wh F_{\tau_2, I_2}\ast\wh\psi_B) \ast\overline{(\wh F_{\tau_2', I_2'}\ast\wh\psi_B)}|^2\bigg)^{1/2},
\end{align}
where, for fixed $(\tau_1, I_1, \tau_1', I_1')$, the sum over $(\tau_2, I_2, \tau_2', I_2')$ in \eqref{sum-tau-I-2} is taken subject to the condition \eqref{quadruples}.
Because each $\tau$ and $\tau'$ lie in a $ O(\bar\alpha)$-conic region, the rectangles $R_{\tau_1, I_1}, R_{\tau_1', I'_1},R_{\tau_2, I_2}, R_{\tau_2', I'_2}$ can be essentially viewed as rectangles pointing in one direction. 
Hence, we see that  for a fixed quadruple $(\tau_1,\tau_1',I_1,I_1')$, there are $\lesssim  \mu_1\mu_2$ quadruples $(\tau_2,\tau_2',I_2,I_2')$ such that (\ref{quadruples}) holds.  
Since $\wh\psi_B$  decays rapidly outside the ball of radius $(R\alpha)^{-1}$ centered at the origin, we end up with
\begin{align}
   \Lambda(\tau,\tau')&\lessapprox \mu_1\mu_2\sum_{\tau,I,\tau', I'}\int|(\wh F_{\tau, I}\ast\wh\psi_B) \ast\overline{(\wh F_{\tau', I'}\ast\wh\psi_B)}|^2  \\  &\lessapprox\mu_1\mu_2\sum_{\tau,\tau'}\sum_{I,I'}\int_{2B}|F_{\tau,I}|^2|F_{\tau',I'}|^2\,.
\end{align}
This establishes \eqref{before-plancherel}.
\end{proof}

The proof of the other lemma is based on a simple geometric observation, as noted in \cite{Carbery-MBR}. 
A similar result can also be found in \cite[Section 10]{MBR-R3}.

\begin{lemma}
\label{l4-ortho-lem-2}
Given $\al\in[R^{-1/2},1]$, let $\tau, \tau'$ be two $\al$-caps and $j, j'\in [1, R]\cap \mathbb Z$. 
Let $\theta$ and $\theta'$ be $R^{-1/2}$-caps forming a finitely overlapping partition for $\tau$ and $\tau'$, respectively.
Suppose $f_{\tau, j} $ and $f_{\tau', j'} $ are two $L^2$ functions such that
\begin{equation}
    f_{\tau, j} = \sum_{\theta} f_{\theta, j}, \qquad
    f_{\tau', j'} = \sum_{\theta'} f_{\theta', j'},
\end{equation}
where $\widehat{f}_{\theta, j}$ is supported in the $R^{-1/2} \times R^{-1}$ rectangle $C_\theta \cap A_{t_j, R}$, and $\widehat{f}_{\theta', j'}$ is supported in the $R^{-1/2} \times R^{-1}$ rectangle $C_{\theta'} \cap A_{t_{j'}, R}$.
Then
\begin{equation}
    \int|f_{\tau,j}|^2 | f_{\tau',j'}|^2 dx \lesssim \sum_{\theta\subset\tau}\sum_{\theta'\subset\tau'}\int|f_{\theta,j}|^2 | f_{\theta',j'}|^2 dx.
\end{equation} 
\end{lemma}

\begin{proof}
By the Plancherel theorem, we have 
\begin{equation}\label{pla}
    \int|f_{\tau,j}|^2 | f_{\tau',j'}|^2dx =  \sum_{\theta_1, \theta_1', \theta_2, \theta_2'} \int \wh{f_{\theta_1, j}}*\overline{\wh {f_{\theta_1', j'}}}\cdot\overline{ \wh{f_{\theta_2, j}}}*\wh{f_{\theta'_2, j'}} \,,
\end{equation}
where $\theta_1, \theta_2\subset \tau$ and $\theta'_1, \theta_2'\subset \tau'$ are $R^{-1/2}$-caps.
For $k\in\{1, 2\}$, the function $\wh{f_{\theta_k, j}}*\overline{\wh {f_{\theta_k', j'}}}$ is supported in 
\begin{equation}
    \supp(\wh{f_{\theta_k, j}}) + \supp(\wh {f_{\theta_k', j'}})\,.
\end{equation}
Note that $\wh f_{\theta, j}$ is supported in $C_\theta\cap A_{t_j, R}$, which is essentially a rectangle of dimensions $R^{-1}\times R^{-1/2}$. 
The key geometric observation is that the sets  $\supp(\wh{f_{\theta_k, j}}) + \supp(\wh {f_{\theta_k', j'}})$ are finitely overlapping for all $\theta_k\subset \tau$ and all $\theta_k'\subset \tau'$ (see \cite{Carbery-MBR}, or \cite[Section 10]{MBR-R3}).
Consequently, combining this geometric observation with (\ref{pla}), we derive that
\begin{equation}
    \int|f_{\tau,j}|^2 | f_{\tau',j'}|^2dx\lesssim \int \sup_{\theta, \theta'} \big|\wh{f_{\theta, j}}*\wh {f_{\theta', j'}}\big|^2\,,
\end{equation}
which is bounded above by 
\begin{equation}
    \sum_{\theta, \theta'} \int \big|\wh{f_{\theta, j}}*\wh {f_{\theta', j'}}\big|^2=  \sum_{\theta, \theta'} \int  |f_{\theta,j}|^2 | f_{\theta',j'}|^2 dx\,. \qedhere
\end{equation}

\end{proof}

\section{Analyzing the model operator: A thorough investigation}
\label{Broad-narrow-section}

Lemma \ref{linearization-lem} clarifies that our focus can be narrowed down to examining the model operator defined as
\begin{equation}
\label{model-operator}
\sum_jS_jf\Id_{F_j},
\end{equation}
with $S_j=S_{t_j, R}$, $\{F_j\}$ comprising disjoint subsets in the plane, and $F_j\subset B_R$ forming unions of unit balls. 
To verify Theorem \ref{Lp-max}, it suffices to demonstrate that
\begin{theorem}
\label{main-thm}
For $p=5/3$ and any $f\in L^p(\ZR^2)$, we have 
\begin{equation}
\label{main-esti}
    \big\|\sum_jS_jf\Id_{F_j}\big\|_p\lessapprox\big\|f\big\|_p.
\end{equation}
\end{theorem}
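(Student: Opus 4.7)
The plan is to prove Theorem \ref{main-thm} by combining a Bourgain--Guth broad-narrow dichotomy with the refined $L^2$ estimates of Proposition \ref{l2-refine-prop}, and then interpolating the resulting broad-norm bound at $p=2$ against the trivial $L^{4/3}$ sharp Bochner--Riesz estimate. The disjointness of $\{F_j\}$ gives directly $\|\sum_jS_jf\,\Id_{F_j}\|_{4/3}^{4/3}\lesssim(\#j)\|f\|_{4/3}^{4/3}$, so the whole task is to establish at $p=2$ the estimate
\begin{equation*}
\Big\|\sum_jS_jf\,\Id_{F_j}\Big\|_2^2\lessapprox\frac{1}{\#j}\|f\|_2^2
\end{equation*}
under the pigeonholed normalization $(\#j)|F_j|\sim R^2$; a Riesz--Thorin interpolation of these two estimates then lands exactly at $p=5/3$ and proves \eqref{main-esti}. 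Note this is a genuine improvement over the gain $(\#j)^{-1/2}$ that a direct application of the Bennett--Carbery--Tao multilinear restriction estimate \eqref{multi-weighted-l2} would yield (and which would only produce $p=16/9$).

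First, I would apply broad-narrow at a dyadic angular scale $\alpha\in[R^{-1/2},1]$: partition $\Ga$ into $\alpha$-caps $\{\tau\}$ and $B_R$ into $R\alpha$-balls $B$. On each such $B$ the contribution of $S_jf$ is split into a narrow part (one or a few caps dominate) and a broad part (at least $M$ caps contribute comparably). The narrow part is amenable to parabolic rescaling on a single cap and reduces by induction on the radius to the same estimate at scale $\lesssim R\alpha^2$, absorbing the loss into $\lessapprox$. After dyadic pigeonholing on $\alpha$, on the common dyadic value of $|F_j|$, on the broad-norm level, and on the angular/radial concentration parameters $\mu_1,\mu_2$ of Lemma \ref{l4-ortho-lem-1}, one is left with a single uniform broad configuration. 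Lemma \ref{regular-lem-multi} then extracts a multi-scale regular subset $F'\subset\bigsqcup_j F_j$ of proportional measure (losing only $(\log R)^{-O(n)}$), so that Proposition \ref{l2-refine-prop} applies to $F'$.

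For this reduced broad configuration I would combine three ingredients: the broad refined-$L^2$ estimate \eqref{want-3}, which delivers a $(|F'|/R^2)^{1/2}$ gain on $\sum_B\|Sf\|_{L^2_{\Br_M}(B)}^2$; the $L^4$ orthogonality Lemmas \ref{l4-ortho-lem-1}--\ref{l4-ortho-lem-2}, which reduce the relevant quadrilinear expressions on each $R\alpha$-ball to sums of squares of $R^{-1/2}$-cap pieces; and the local $L^2$ estimate Lemma \ref{local-L2}, which transfers those cap pieces on an $R\alpha$-ball to $\|f\|_2^2$. Following the same structural template as the sketch for the reverse square function in Section \ref{Intro}---using reverse H\"older on a pigeonholed level set to convert an $L^4_{\Br}$ expression into an $L^2_{\Br}$ expression inside each $R\alpha\times R$ tube, then applying a rescaled refined $L^2$---the combined effect squares the broad-norm saving and produces the desired factor $|F|/R^2\sim(\#j)^{-1}$ on the $p=2$ side, which together with the $L^{4/3}$ bound closes the interpolation at $p=5/3$.

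The main technical obstacle I foresee is the compatibility of the refined $L^2$ estimate---stated natively at the full $R$-scale via the broad norm at $B_R$---with the $L^4$ orthogonality structure, which operates most cleanly at the intermediate $R\alpha$-scale of each broad ball. Bridging these will require a rescaled application of Proposition \ref{l2-refine-prop} inside each conic region $C_{\tau_0}$ and careful management of the multi-scale regularity factors $(\ka_{r_1},\dots,\ka_{r_n})$ through parabolic rescaling; by Remark \ref{refine-l2-remark} the proposition survives parabolic rescaling, but the iteration must be set up so that the many $(\log R)^{O(n)}$ losses are genuinely absorbed into $\lessapprox$. A secondary difficulty is to ensure that the regular-set construction for $F'$ remains compatible with the per-$j$ structure of $\{F_j\}$: isolating a sub-collection of $j$'s for which each $F_j\cap F'$ inherits the regularity factors of $F'$ will likely require an additional pigeonhole step, and propagating the disjointness of the $F_j$'s through the pigeonholing while keeping $(\#j)|F_j|\sim R^2$ is where the bookkeeping becomes most delicate.
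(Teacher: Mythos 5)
Your reduction of the whole problem to the single estimate $\big\|\sum_j S_jf\Id_{F_j}\big\|_2^2\lessapprox(\#j)^{-1}\|f\|_2^2$ (under $(\#j)|F_j|\sim R^2$), followed by Riesz--Thorin against the trivial $L^{4/3}$ bound, contains the essential gap. First, as stated that $L^2$ bound is false: take $\wh f$ supported on a single $R^{-1/2}\times R^{-1}$ cap inside $A_{t_{j_0},R}$ and place $F_{j_0}$ inside the dual $R^{1/2}\times R$ tube; then $\|S_{j_0}f\Id_{F_{j_0}}\|_2^2\sim (|F_{j_0}|/R^{3/2})\|f\|_2^2= R^{1/2}(\#j)^{-1}\|f\|_2^2$. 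You plan to peel off such narrow contributions by broad--narrow plus rescaling, but even for the purely broad part the gain $(\#j)^{-1}$ is beyond the reach of the refined $L^2$ machinery you invoke: \eqref{want-3} saves exactly $(|F|/R^2)^{1/2}$, and this saving is sharp (e.g.\ $F=B_r$, as noted after \eqref{multi-weighted-l2}); the paper's own introduction computes that a $(\#j)^{-1/2}$ saving at $L^2$ combined with the trivial $L^{4/3}$ bound yields only $p=16/9$. Your key sentence, that ``the combined effect squares the broad-norm saving,'' is precisely the unjustified step: no application of reverse H\"older, parabolic rescaling, or a re-application of Proposition \ref{l2-refine-prop} can square a saving that is already sharp, so the proposal never actually produces the factor $(\#j)^{-1}$, and the interpolation at $5/3$ does not close.

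The paper's route to $5/3$ is structurally different and supplies what is missing. After the broad--narrow step it pigeonholes into a tube family $\ZT_{\la,\nu}$ of $R\al\times R$ tubes with $|T_j|\sim\la$, $\#\cj_T\sim\nu$, $\la\nu\lesssim\al R^2$ as in \eqref{la-nu}, and proves \emph{three} estimates: Proposition \ref{l2-prop1} (the refined $L^2$ bound, with factor $\prod_k\ka_{r_k}^{-1}\,\la/(R^2\al)$), Proposition \ref{l2-prop2} (a second, independent $L^2$ bound with factor $(R^2\al/(\la\nu))\be^{-1}$, coming from the local $L^2$ estimate of Lemma \ref{local-L2} together with the Wolff-axiom/Kakeya non-concentration Lemmas \ref{non-concentration-lem} and \ref{kakeya-type-lem} --- an ingredient absent from your plan), and, crucially, the nontrivial $L^{4/3}$ estimate of Proposition \ref{l4/3-prop}, proved in dual $L^4$ form via Lemmas \ref{l4-ortho-lem-1}--\ref{l4-ortho-lem-2}, the pseudo-ball decomposition, and the refined $L^2$ bounds \eqref{l2-1-1} and \eqref{l2-2}; your proposal replaces this last, hardest ingredient by the trivial $(\#j)\|f\|_{4/3}^{4/3}$ bound. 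Moreover, since these bounds carry the parameters $\la,\nu,\be,\ka_{r_k}$, which cancel only in the specific product $I_2(f)^6I_{4/3}(f)^4$, they cannot be interpolated as operator norms; the paper instead decomposes $f$ into level sets $f_k$ with $|f_k|$ essentially characteristic and applies H\"older in the form $I_{5/3}(f_k)\le I_2(f_k)^{3/5}I_{4/3}(f_k)^{2/5}$, not Riesz--Thorin. Without the second $L^2$ estimate and the genuine $L^{4/3}$ estimate, the exponent $5/3$ is not attainable along the route you describe.
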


\noindent Theorem \ref{main} can be derived from Theorem \ref{Lp-max} via Proposition \ref{localization-prop}. Consequently, our focus now shifts to establishing Theorem \ref{main-thm}.

\medskip

We initiate the proof of Theorem \ref{main-thm} by employing a broad-narrow reduction technique on our model operator \eqref{model-operator}, reminiscent of the broad-narrow analysis method used in the work 
of Bourgain and Guth  in \cite{Bourgain-Guth-Oscillatory} and Guth in \cite{Guth}.

\bigskip

\subsection{The broad-narrow analysis}

\begin{definition}
For any $\rho$-cap $\tau$, we define $\tau^*$ to be the $2(\log R) \rho$-cap that is concentric with $\tau$.
\end{definition}

\begin{lemma}
\label{broad-narrow}
Fix $j$. 
For each $x\in F_j$, either one of the following statements is true.
\begin{enumerate}
    \item [$\bullet$] $x$ is {\bf{$\al$-broad}} for some dyadic number $R^{-1/2}<\al\leq 1$. This means that there exist an $\al$-cap $\tau$ and 
    at least $10\log\log R$ many $\alpha$-caps $\tau'$ contained in $\tau^*$ such that 
   % a set of $\al$-caps $\{\tau'\}$ with $\#\{\tau'\}\geq 10\log\log R$ such that for any $\tau'$, we have $\tau'\subset \om(\tau)$, and
    \begin{equation}
        |S_{\tau',j}f(x)|\gtrapprox|S_{\tau,j}f(x)|\gtrapprox |S_jf(x)|.
    \end{equation}
    
    \item[$\bullet$] $x$ is {\bf{narrow}}. This means that there exists an $R^{-1/2}$-cap $\theta$ so that
    \begin{equation}
        |S_{\theta,j}f(x)|\gtrapprox|S_jf(x)|.
    \end{equation}
\end{enumerate}
\end{lemma}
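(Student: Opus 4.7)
The argument is an iterative dyadic pigeonhole on the angular scale, in the spirit of the broad--narrow method of Bourgain--Guth \cite{Bourgain-Guth-Oscillatory} and Guth \cite{Guth}. Fix $j$ and $x\in F_j$, and set $\alpha_k := 2^{-k}$ for $k=0,1,\ldots,K$ with $\alpha_K \sim R^{-1/2}$. For each $k$, decompose $S_jf(x)=\sum_\tau S_{\tau,j}f(x)$ over the $\alpha_k$-caps $\tau$. Assume throughout that $x$ is not narrow, i.e.\ $|S_{\theta,j}f(x)| < |S_jf(x)|/\eta$ for every $R^{-1/2}$-cap $\theta$, where $\eta:=(\log R)^{C}$ absorbs the $\gtrapprox$ slack for a suitable constant $C$; our goal is to produce a dyadic $\alpha\in(R^{-1/2},1]$ certifying $\alpha$-broadness.

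\paragraph{Iterative tree construction.} Starting from the root (the full $1$-cap), I build a branching tree $T$ as follows. At each node $\tau$ at scale $\alpha_k$, consider the two $\alpha_{k+1}$-sub-caps $\tau_L,\tau_R$ with $S_{\tau,j}f(x)=S_{\tau_L,j}f(x)+S_{\tau_R,j}f(x)$. Call the splitting step \emph{spread} when both sub-caps satisfy $|S_{\cdot,j}f(x)|\geq |S_{\tau,j}f(x)|/\eta$, in which case both become children of $\tau$ in $T$; otherwise, call it \emph{concentrated} and take only the dominant sub-cap (which retains at least a fraction $1-1/\eta$ of the parent contribution) as child. Iterate until all leaves sit at scale $\alpha_K\sim R^{-1/2}$.

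\paragraph{Dichotomy from the tree.} Along any root-to-leaf path with $s$ spread steps and $K-s$ concentrated steps, the leaf contribution is at least $|S_jf(x)|\cdot \eta^{-s}(1-1/\eta)^{K-s}$. If some path has $s\leq 1$, this bound exceeds $|S_jf(x)|\cdot \eta^{-1}(1-1/\eta)^{K-1}\gtrapprox |S_jf(x)|$ (for $C$ large enough relative to the slack), contradicting non-narrowness at that leaf. Hence every root-to-leaf path must contain $s\geq 2$ spread steps; in fact, combining the non-narrow upper bound on individual leaves with the triangle inequality $|S_jf(x)|\leq \sum_{\text{leaves}}|S_{\text{leaf},j}f(x)|$ plus the total loss from concentrated steps, the tree must have at least $\eta$ leaves, and consequently at least $\log_2\eta\sim\log\log R$ many spread events.

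\paragraph{Extracting broadness.} Pigeonholing the spread events across the $K$ dyadic scales and across the ancestors in $T$, locate a depth $k_*$ and a coarser ancestor $\tau^{(k_0)}$ at scale $\alpha_{k_0}$ with $k_*-k_0\leq \log_2\log R$ such that the subtree rooted at $\tau^{(k_0)}$ contains $\geq 10\log\log R$ descendants at scale $\alpha_{k_*}$. All these descendants are $\alpha_{k_*}$-caps contained in $\tau^{(k_0)}$, whose angular length is $\alpha_{k_0}=2^{k_*-k_0}\alpha_{k_*}\leq \log R\cdot \alpha_{k_*}$; hence $\tau^{(k_0)}\subset \tau^*$ for each such descendant $\tau$. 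Each of these descendants carries contribution $\gtrapprox |S_jf(x)|$ because the $\leq\log_2\log R$ spread steps along its path from $\tau^{(k_0)}$ cost at most a factor $\eta^{\log_2\log R}=(\log R)^{C\log_2\log R}$, which is absorbed into the $\gtrapprox$-slack by adjusting $C$. This yields $\alpha_{k_*}$-broadness as required.

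\paragraph{Main obstacle.} The delicate point is the final pigeonhole: we must guarantee that the spread events in $T$ cluster enough — both in depth (consecutive or near-consecutive scales) and in branching structure — to produce $\gtrsim \log\log R$ comparably hot caps sharing a common $\alpha_{k_0}$-ancestor. This is precisely where the specific threshold $10\log\log R$ in Lemma \ref{broad-narrow} is calibrated against the tree depth $K\sim \log R$ and the polylog loss budget $\eta$; a clean accounting of the losses at spread versus concentrated steps, combined with a two-parameter pigeonhole over scale and position, is what makes the broad case robustly implyable from non-narrowness.
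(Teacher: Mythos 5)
Your set-up (binary angular refinement, spread/concentrated dichotomy, the telescoping inequality $|S_jf(x)|\le (1-1/\eta)^{-K}\sum_{\mathrm{leaves}}|S_{\mathrm{leaf},j}f(x)|$, hence at least $\sim\eta$ leaves under non-narrowness) is fine, but the step you yourself flag as the "main obstacle" is a genuine gap, not a routine accounting. From "many spread events in the tree" you cannot pigeonhole out a single ancestor $\tau^{(k_0)}$ having $\ge 10\log\log R$ descendants within $\log_2\log R$ levels of it. With binary branching over $K\sim\log R$ levels, the spread events forced by non-narrowness can be distributed so that along every root-to-leaf path consecutive spread steps are separated by more than $\log_2\log R$ concentrated levels; such a tree still has up to $2^{K/\log_2\log R}=R^{c/\log_2\log R}\gg(\log R)^{C}$ leaves, so it is compatible with every constraint you derive, yet no node has more than $2$ tree-descendants inside any window of $\log_2\log R$ scales. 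Hence the collection of $10\log\log R$ comparably hot caps lying in a common $\tau^*$ simply need not exist in your tree, and no two-parameter pigeonhole over scale and position can manufacture it.

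There is a second, independent problem: even if such an ancestor existed, your comparability estimate only bounds the descendants from below by $\eta^{-\log_2\log R}|S_{\tau^{(k_0)},j}f(x)|$, i.e. relative to the ancestor, while the lemma requires $\gtrapprox|S_jf(x)|$. You never control $|S_{\tau^{(k_0)},j}f(x)|$ against $|S_jf(x)|$: along the root-to-$\tau^{(k_0)}$ path the guaranteed retention is only a factor $1/\eta$ per spread step (or $1/2$ along the greedy branch), and with up to $\sim\log R$ such steps the accumulated loss can be polynomial in $R$, which $\gtrapprox$ cannot absorb. Both difficulties are exactly what the paper's proof is engineered to avoid: it refines by a factor $\log R$ per step, using only $\lesssim\log R/\log\log R$ scales, so that at each single step all $\sim\log R$ children automatically lie in $\omega_k^*$; then either $10\log\log R$ children are comparable to the maximal one (broadness is immediate at that scale), or one child captures a $\gtrsim(\log\log R)^{-1}$ fraction of its parent, and the total multiplicative loss $(\log\log R)^{O(\log R/\log\log R)}=R^{o(1)}$ stays admissible all the way down to scale $R^{-1/2}$. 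To repair your argument you would essentially have to coarsen your binary tree back into $\log R$-fold blocks, at which point it becomes the paper's proof.
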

\begin{proof}
Without loss of generality, we can assume that $\log R$ is a dyadic number.
Let us  introduce $\lesssim \log R/\log\log R$ many scales $\rho_0,\rho_1\,\cdots,\rho_n$, where $\rho_k=(\log R)^{-k}$ for $k\in\{0, \cdots, n-1\}$ and $\rho_n\approx R^{-1/2}$.
For each $\rho_k$, partition the unit circle into disjoint $\rho_k$-caps $\{\om_k\}$. 
We establish the lemma by running the following algorithm. 

\medskip

First we define $S_{\om_0,j}f=S_j f$. The algorithm starts from the first scale $\rho_1$, which we designate as the first step. In the $k$-th step, there is a function $S_{\om_{k-1},j}f$ from the $(k-1)$-th step whose Fourier transform is  supported in $A_{t_j, R}\cap C_{\om_{k-1}}$.  We partition the multiplier operator $S_{\om_{k-1},j}$ using $\rho_k$-caps $\{\om_k\}$ so that 
\begin{equation}
    S_{\om_{k-1},j}f=\sum_{\om_k\subset 2\om_{k-1}}S_{\om_{k},j}f.
\end{equation}
If there exists a $\rho_k$-cap $\om_k$ and at least $10\log\log R$ many $\rho_k$-caps $\omega'_k$ contained in $\om_k^*$ such that 
%$\{\om_k'\}$ , a set of $\geq10\log\log R$ $\rho_k$-caps, and a $\rho_k$-cap $\om_k$, such that for any $\om_k'\in\{\om_k\}$, we have $\dist(\om_k,\om_k')\gtrsim\rho_k$ and 
\begin{equation}
\label{dichotomy-1}
    |S_{\om_k',j}f(x)|\gtrsim|S_{\om_k,j}f(x)|\geq (10\log R)^{-1} |S_{\om_{k-1},j}f(x)|,
\end{equation}
then we terminate the algorithm by setting  $\al=\rho_k$, $\tau'=\om_k'$, and $\tau=\om_k$.

\medskip

Otherwise, by the triangle inequality and pigeonholing, there exists a $\rho_k$-cap $\om_k$ satisfying
\begin{equation}
\label{dichotomy-2}
    |S_{\om_{k},j}f(x)|\gtrsim (\log\log R)^{-1}|S_{\om_{k-1},j}f(x)|.
\end{equation}
We then proceed with the algorithm to the $(k+1)$-step.

\medskip

If the algorithm halts at some $k<n$, then \eqref{dichotomy-1} satisfies the first part of the lemma, as $k<n\lesssim {\log R}/{\log\log R}$ and due to \eqref{dichotomy-2},
confirming the $\alpha$-broadness of $x$. 
Otherwise, for a similar rationale, there exists an $R^{-1/2}$-cap $\theta$ such that
\begin{equation}
|S_{\theta, j}f(x)|\gtrapprox|S_{j}f(x)|,
\end{equation}
which fulfills the second part of the lemma and hence indicates the narrowness.   
\end{proof}

\bigskip

\subsection{The preliminary reduction for the model operator}

Using Lemma \ref{broad-narrow}, we partition $F_j$ as 
\begin{equation}
    F_j=\Big(\bigcup_\al E_{\al, j}\Big)\cup E_{R^{-1/2}, j},
\end{equation}
where $E_{\al, j}$ is a collection of the $\al$-broad points in $F_j$, and $E_{R^{-1/2},j}$ is a collection of the narrow points in $F_j$. By pigeonholing, there exists an $\al\in [R^{-1/2}, 1]$ so that

\begin{equation}
    \big\|\sum_jS_jf\Id_{F_j}\Big\|_p^p\lessapprox\big\|\sum_jS_jf\Id_{ E_{\al, j}}\big\|_p^p\lessapprox\big\|\sum_jS_{\tau(x), j}f\Id_{ E_{\al, j}}\big\|_p^p,
\end{equation}
where $\tau(x)$ is an $\alpha$-cap depending on the variable $x$. 
Hence there exists a collection of disjoint sets  $\{F_{\tau,j}\}_{\tau,j}$ with $F_{\tau,j}\subset  E_{\al, j}$ so that
\begin{equation}
    \big\|\sum_jS_{\tau(x), j}f\Id_{E_{\al, j}}\big\|_p^p=\sum_\tau\big\|\sum_jS_{\tau, j}f\Id_{F_{\tau,j}}\big\|_p^p=\sum_\tau\sum_j\big\|S_{\tau, j}f\Id_{F_{\tau,j}}\big\|_p^p,
\end{equation}
where $\tau$ ranges over all $\al$-caps. 
For each $\alpha$-cap $\tau$, we denote the normal vector of the cap $\tau$ at its center by $e(\tau)$.
Divide $B_R$ into parallel tubes of dimensions $R\alpha \times R$, oriented in the direction of $e(\tau)$. 
We denote the collection of these tubes as $\mathbb{T}_\tau$ and let $\ZT:=\bigcup_\tau\ZT_\tau$ where $\tau$ ranges over all $\al$-caps. 
For each $T\in\ZT_\tau$,
\begin{equation}
    T_j:=F_{\tau,j}\cap T.
\end{equation}
It is important to note that the sets $\{T_j : T \in \mathbb{T},\ j \in [1,R] \cap \mathbb{Z}\}$ are pairwise disjoint.
Since $S_{\tau, j}f$ is supported in a unit ball in the frequency space, by the uncertainty principle (see also Lemma \ref{linearization-lem}), $T_j$ can be chosen as a collection of unit balls in the physical space. 
For dyadic numbers $\la\in[1,\al R^2]$ and $\nu\in[1,R]$, let $\ZT_{\la,\nu}\subset\ZT$ denote the collection of tubes $T$ in $\ZT$ satisfying $\# \{j: |T_j|\sim \la\}\sim \nu$. 
We remark that $\{\ZT_{\la,\nu}\}_{\la,\nu}$ need not be disjoint.  
Since for fixed $T\in\ZT_{\la,\nu}$, the sets $\{T_j:j\in[1,R]\cap \ZZ\}$ are disjoint,  it is easy to see that 
\begin{equation}\label{la-nu}
    \la\nu \lesssim \al R^2\,. 
\end{equation}

\medskip

%\begin{enumerate}
 %   \item $|T_j|\sim \la$ for all $j$ and all $T\in\ZT_{\la,\nu}$, otherwise $T_j=\varnothing$.
 %   \item For every $T\in\cT_{\la,\nu}$, $\#\{j: T_j\subset T\}\sim\nu$.
%\end{enumerate}

By pigeonholing, there exist $\la,\nu$ and, for each $T\in\ZT_{\la,\nu}$, a set $\cj_T$ such that $|\cj_T|\sim\nu$, $|T_j|\sim\lambda$ for each $T\in\ZT_{\lambda,\nu}$ and $j\in\cj_T$, and
\begin{equation}
\label{after-bn}
    \sum_\tau\sum_j\big\|S_{\tau, j}f\Id_{F_{\tau,j}}\big\|_p^p\lessapprox\sum_\tau\sum_{T\in\ZT_\tau\cap\ZT_{\la,\nu}}\sum_{j\in\cj_T}\big\|S_{\tau, j}f\Id_{T_j}\big\|_p^p.
\end{equation}
We use $\ZT_{\tau, \lambda, \nu}$ to denote $\ZT_\tau\cap \ZT_{\la,\nu}$.  
This finishes our broad-narrow reduction. 
Therefore,  Theorem \ref{main-thm} boils down to the estimate
\begin{equation}
\label{reduction-1}
    \sum_\tau\sum_{T\in\ZT_{\tau, \lambda, \nu}}\sum_{j\in\cj_T}\big\|S_{\tau, j}f\Id_{T_j}\big\|_p^p\lessapprox \big\|f\big\|_p^p
\end{equation}
for $p=5/3$. 

\medskip

Note that $\ZT_{\lambda, \nu}= \bigcup_\tau \mathbb T_{\tau, \lambda, \nu}$.
We observe that the collection $\ZT_{\lambda, \nu}$ of $R\al\times R$ tubes obeys certain non-concentration properties. 

%Since $\{T_j\}$ are disjoint sets, the collection $\ZT$ of $R\al\times R$ tubes  obeys certain non-concentration properties. 

\begin{definition}[One-dimensional ball condition]
Let $\rho_1\leq \rho_2$ be two positive numbers. 
We say  $\ZT$, a collection of, $\rho_1\times\rho_2$ tubes obeys the {\bf one-dimensional ball condition with factor $M$}, if any $\rho\times\rho_2$-tube, where $\rho_1\leq\rho\leq\rho_2$, contains $\leq M (\rho/\rho_1)$ tubes in $\ZT$.
\end{definition}

%Recall that $\cT=\bigcup_\tau \cT_\tau$.

\begin{lemma}
\label{non-concentration-lem}
$\ZT_{\lambda, \nu}$ is a collection of $R\al\times R$ tubes which obeys one-dimensional ball condition with factor $\lesssim R^2\al/(\la\nu)$.    
\end{lemma}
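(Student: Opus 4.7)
The plan is a direct volume-counting argument that exploits the disjointness of the family $\{T_j\}_{T,j}$, which is explicitly noted just before the lemma. Fix a $\rho\times R$ tube $\wt T$ with $R\al\leq \rho\leq R$, and let $N$ denote the number of tubes $T\in\ZT_{\la,\nu}$ contained in $\wt T$. Since $\rho_1=R\al$ in our setting, the lemma reduces to showing $N\lesssim \rho R/(\la\nu)$, which is exactly Wolff's axiom with factor $M\lesssim R^2\al/(\la\nu)$.

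First I would unpack the definition of $\ZT_{\la,\nu}$: every such $T$ carries $\#\cj_T\sim\nu$ nonempty pieces $T_j=F_{\tau,j}\cap T$ of measure $\sim\la$, so
\[
\sum_{\substack{T\in\ZT_{\la,\nu}\\ T\subset\wt T}}\sum_{j\in\cj_T}|T_j|\;\sim\;N\la\nu.
\]
Next I would invoke the disjointness of the $T_j$'s, which follows from two ingredients recorded earlier: the sets $F_{\tau,j}$ are disjoint across all pairs $(\tau,j)$, and for each fixed $\al$-cap $\tau$ the tubes of $\ZT_\tau$ partition $B_R$. Consequently the $T_j$'s appearing above are pairwise disjoint subsets of $\wt T$, so their total measure is at most $|\wt T|=\rho R$. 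Combining the two bounds gives
\[
N\;\lesssim\;\frac{\rho R}{\la\nu}\;=\;\frac{R^2\al}{\la\nu}\cdot\frac{\rho}{R\al},
\]
which is the desired Wolff bound.

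I do not anticipate any genuine obstacle; the statement is essentially a clean pigeonhole once the disjointness is in place. The only point worth flagging is that tubes $T\in\ZT_\tau$ have direction $e(\tau)$ varying with $\tau$, so $T\subset\wt T$ forces $T$ to be (approximately) parallel to $\wt T$. This restricts the family being counted but has no effect on the measure bookkeeping above, which is where all the work happens.
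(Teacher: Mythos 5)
Your proof is correct and is essentially the same argument as the paper's: both count the pairwise disjoint sets $T_j$ (each of measure $\sim\la$, with $\sim\nu$ of them per tube) inside the larger $\rho\times R$ tube and compare with its volume $\rho R$ to get the Wolff bound.
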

\begin{proof}
Assume $r\in[R\al, R]$. Let $\bar T$ be an arbitrary $r\times R$ tube, and define  $\ZT(\bar T):=\{T\in\ZT: T\subset\bar T\}$. It suffices to prove that $\#\ZT(\bar T)\lesssim \frac{R^2\al}{\la\nu} \frac{r}{R\al}$. 
Notice that 
\begin{equation}
    \bar T\supset \bigcup_{T\in\ZT(\bar T)}\bigcup_{j\in\cj_T}T_j
\end{equation}
since $T_j$'s are disjoint when $j\in \cj_T$.   Thus, we see that 
\begin{equation}
    rR\sim|\bar T|\geq\sum_{T\in\ZT(\bar T)}\sum_{j\in\cj_T}|T_j|.
\end{equation}
Since $|T_j|\sim\la, \#\cj(T)\sim \nu$, we obtain 
\begin{equation}
    rR\gtrsim \#\ZT(\bar T)\la\nu,
\end{equation}
which is exactly what we want.
\end{proof}

\bigskip

\subsection{Refinement of the reduction process for the model operator}

We now 
make a further reduction on the left-hand side of \eqref{reduction-1}. 
By dyadic pigeonholing, for each $\al$-cap $\tau$, there exist a set $\ZT_{\tau, \la, \nu}'\subset\ZT_{\tau, \la, \nu}$ and a union of finitely overlapping unit balls $T_j'$ for each $T\in\ZT_{\tau,\lambda,\nu}'$ and each $j\in\cj_T$ such that
\begin{enumerate}
    \item $T_j'\subset T_j$, and $|T_j'|$ are the same up to a constant multiple for all $T\in\bigcup_\tau \ZT_{\tau,\lambda,\nu}'$ and each $j\in\cj_T$.
    \item $|S_{\tau, j}f(x)\Id_{T_j'}(x)|$ are the same up to a constant multiple for all $x\in T_j'$.
    \item We have
    \begin{equation}
    \label{reduction-2}
        \sum_\tau\sum_{T\in\ZT_{\tau, \la, \nu}'}\sum_{j\in\cj_T}\big\|S_{\tau, j}f\Id_{T_j'}\big\|_p^p\gtrapprox\sum_\tau\sum_{T\in\ZT_{\tau, \la, \nu}}\sum_{j\in\cj_T}\big\|S_{\tau, j}f\Id_{T_j}\big\|_p^p.
    \end{equation}
\end{enumerate}

To simplify notation, we continue to denote $\ZT_{\tau, \la, \nu}'$ by $\ZT_{\tau, \la, \nu}$, and represent $\ti{T_j}$ as $T_j$. 
Therefore, we have $|T_j| \lesssim \lambda$ for all $T \in \ZT_{\la,\nu}=\bigcup_{\tau} \ZT_{\tau, \la, \nu}$ and $j \in \mathcal{J}_T$.

\smallskip

Now, we shift our focus to a particular $T_j$ and $\big\|S_{\tau, j}f\Id_{T_j}\big\|_p^p$. Through a linear transformation, let's assume, without loss of generality, that $\tau$ represents the cap $\{e\in\mathbb{S}^1: \text{dist}(e,e_2)\leq\alpha\}$, where $e_2=(0,1)$ denotes the vertical unit vector. 
Under this assumption,  the Fourier transform of $S_{\tau, j}f$ is essentially contained in the set 
\begin{equation}
    \{(\xi_1,\xi_2):|\xi_2-(t_j-\xi_1^2)^{1/2}|\leq R^{-1}\text{ and }|\xi_1|\lesssim \al\}.
\end{equation}
Define $\cl$ to be the parabolic rescaling (depending on $T$)
\begin{equation}
\label{para-rescaling}
    \cl(x_1,x_2)=((\al\log R) x_1, (\al\log R)^2 x_2).
\end{equation}
Then, for some $C^\infty$ function $\phi$ with $|\phi''|\sim1$, the Fourier transform of $S_{\tau, j}f\circ\cl$ is contained in the set
\begin{equation}
\label{before-K}
    \{(\xi_1,\xi_2):|\xi_2-\phi(\xi_1)|\lesssim (\al\log R)^{-2}R^{-1}\text{ and }|\xi_1|\lesssim1\}.
\end{equation}

Fix $K$ with $K\sim \exp{(\log R/\log\log R)}\lessapprox1$ such that $K^n=R(\al\log R)^2$ for some $n\leq \log\log R$. In particular, $(\log R)^{O(n)}\lessapprox1$. 
Let us define $E_1:=\cl(T_j)$. 
Then $\cl(T_j)$ is contained in an $R(\al\log R)^2$-ball.
Next, we are going to find a significant subset of $E_1$ whose $K$-neighborhood is regular for some factors.
 
Since $|S_{\tau, j}f(x)\Id_{T_j}(x)|$ are the same up to a constant multiple, $|(S_{\tau, j}f\Id_{T_j})\circ\cl(x)|$ are the same up to a constant multiple for all $x\in E_1$. By dyadic pigeonholing, there exists a set $E_2\subset E_1$ such that
\begin{enumerate}
    \item $|E_2|\gtrapprox|E_1|$
    \item $|B\cap E_2|$ are the same up to a constant multiple for all $K$-balls $B\subset \cl(T)$, if $B\cap E_2\not=\varnothing$.
\end{enumerate}
Let $E_3:=N_K(E_2)$. 
By Lemma \ref{regular-lem-multi}, there exists a union of $K$-balls $E_4\subset E_3$ with $|E_4|\gtrapprox|E_3|$ such that $E_4$ is regular with some factors $(\ka_{r_1},\ldots,\ka_{r_n})$, where $r_1=K, r_n=R\al^2$. 
Denote by $E_2'=E_2\cap E_4$. 
Since $|B\cap E_2|$ are the same up to a constant multiple for all $K$-balls $B\subset \cl(T)$ if $B\cap E_2\not=\varnothing$ and since $|E_4|\gtrapprox|E_3|$, we have $|E_2'|\gtrapprox|E_2|$ and hence $|E_2'|\gtrapprox|E_1|$.

Let $T_j':=\cl^{-1}(E_2')$. Since $|E_2'|\gtrapprox|E_1|$, 
\begin{equation}
    \big\|S_{\tau, j}f\Id_{T_j'}\big\|_p^p\gtrapprox\big\|S_{\tau, j}f\Id_{T_j}\big\|_p^p.
\end{equation}
To ease notation, let us still use $T_j$ to denote $T_j'$. 

\medskip

Refer back to equations \eqref{reduction-1} and \eqref{reduction-2}. 
Therefore, after another dyadic pigeonholing on $T_j$ about the factors $(\ka_{r_1},\ldots,\ka_{r_n})$, demonstrating Theorem \ref{main-thm} boils down to showing 
\begin{proposition}
\label{reduction-lem}
Let $p=5/3$ and let $\alpha\in [R^{-1/2}, 1]$, $\la\in [1, \alpha R^2]$ and $ \nu\in [1, R]$ be dyadic numbers obeying (\ref{la-nu}). 
Suppose $\ZT_{\la, \nu}=\bigcup_{\tau}\ZT_{\tau, \la, \nu}$, where $\tau$ ranges over all $\alpha$-caps, is a collection of $R\al\times R$ tubes such that 
%Suppose $\{T_j\}_{T\in\cT,j\in\cj(T)}$ satisfies the following properties:
\begin{enumerate}
   \item  for each $T\in\ZT_{\la, \nu}$, there is a set $\cj_T$ with $|\cj_T|\sim\nu$;
    \item for each $T\in \ZT_{\la, \nu}$ and each $j\in\cj_T$,  there is a subset $T_j$ of $T$ such that  $T_j$ is a union of finitely overlapping unit balls and $|T_j|\lesssim\la$.
    Moreover, for all $T \in\ZT_{\la, \nu}$ and $j\in \cj_T$, $T_j$ are pairwise disjoint and  $|T_j|$ are the same up to a constant multiple;
   \item if $\alpha>R^{-1/2}$, then any $x\in T_j$ is $\al$-broad whenever $T\in \ZT_{\la, \nu}$ and $j\in\cj_T$;
    \item $|S_{\tau, j}f(x)\Id_{T_j}(x)|$ are the same up to a constant multiple for all $x\in T_j$;
    \item $N_K(\cl(T_j))$ is regular with some uniform factors $(\ka_{r_1},\ldots,\ka_{r_n})$ for each $T\in\ZT_{\la,\nu}$ and $j\in\cj_T$, where $r_1=K, r_n=R\al^2$.
    \item $\ZT_{\lambda,\nu}$ satisfies Lemma \ref{non-concentration-lem}.
\end{enumerate}
Then, for any $\e>0$, there exists $C_\e>0$ such that
\begin{equation}
\label{reduction-3}
    \sum_\tau\sum_{T\in\ZT_{\tau, \la, \nu}}\sum_{j\in\cj_T}\big\|S_{\tau, j}f\Id_{T_j}\big\|_p^p\leq C_\e R^\e \big\|f\big\|_p^p.
\end{equation}
\end{proposition}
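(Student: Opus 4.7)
The plan is to prove Proposition \ref{reduction-lem} by Cauchy--Schwarz interpolation between an $L^2$ bound (with a gain supplied by Proposition \ref{l2-refine-prop}) and the sharp planar Bochner--Riesz $L^{4/3}$ bound. Condition (4) says that $|S_{\tau,j}f(x)\Id_{T_j}(x)|$ is essentially constant in $x\in T_j$; denoting this constant by $A_{\tau,T,j}$ and writing $A_{\tau,T,j}^{5/3}=(A_{\tau,T,j}^2)^{1/2}(A_{\tau,T,j}^{4/3})^{1/2}$, Cauchy--Schwarz on the triples $(\tau,T,j)$ gives
\[
\sum_{\tau,T,j}\|S_{\tau,j}f\Id_{T_j}\|_{5/3}^{5/3}\leq\Big(\sum_{\tau,T,j}\|S_{\tau,j}f\Id_{T_j}\|_2^2\Big)^{1/2}\Big(\sum_{\tau,T,j}\|S_{\tau,j}f\Id_{T_j}\|_{4/3}^{4/3}\Big)^{1/2}.
\]
A restricted-weak-type reduction (testing against $f=\Id_E$) then converts \eqref{reduction-3} into the single requirement $G_2\cdot G_{4/3}\lessapprox 1$, where $G_p$ is the constant that appears in the corresponding sum.

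For the $L^{4/3}$ side, the sharp Carleson--Sj\"olin/Fefferman estimate $\|S_{\tau,j}f\|_{4/3}\lesssim\|f\|_{4/3}$ bounds $G_{4/3}$ by the cardinality of the index set $\{(\tau,T,j)\}$. The pairwise disjointness of the $T_j$ in each tube, together with the Wolff-axiom consequence of Lemma \ref{non-concentration-lem}, yields $G_{4/3}\lessapprox R^2/(\alpha\lambda)$.

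For the $L^2$ side, the narrow regime $\alpha=R^{-1/2}$ is handled directly by $L^2$-orthogonality across the $R^{-1/2}$-caps together with Lemma \ref{local-L2}. In the broad regime $\alpha>R^{-1/2}$, condition (3) allows me to select two angularly $\gtrapprox\alpha$-separated $\alpha$-caps $\tau'_1,\tau'_2\subset\tau^*$ with $|S_{\tau,j}f(x)|^2\lessapprox|S_{\tau'_1,j}f(x)||S_{\tau'_2,j}f(x)|$ pointwise on $T_j$, producing a bilinear analogue of $|S_{\tau,j}f|^2$. I then apply the parabolic rescaling $\cl$ from \eqref{para-rescaling}; after rescaling, $S_{\tau,j}f\circ\cl$ has Fourier support in the $(R\alpha^2)^{-1}$-neighborhood of a $C^2$ curve of nonvanishing curvature, and by condition (5) the set $N_K(\cl(T_j))$ is regular with factors $(\kappa_{r_1},\ldots,\kappa_{r_n})$. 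The refined $L^2$ estimate \eqref{want-3}, extended to $C^2$ curves by Remark \ref{refine-l2-remark}, is therefore applicable at the rescaled scale. Combining the broad-to-bilinear pass with \eqref{want-3}, the local estimate Lemma \ref{local-L2}, and the $L^4$-orthogonality Lemmas \ref{l4-ortho-lem-1}--\ref{l4-ortho-lem-2} to control cross-interactions when summing over the triples then produces $G_2\lessapprox\alpha\lambda/R^2$, exactly the reciprocal of $G_{4/3}$.

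The main obstacle is the $L^2$ step: the interplay of broadness-driven bilinearization, parabolic rescaling, and the refined $L^2$ estimate \eqref{want-3} must be composed so that the summation over $(\tau,T,j)$ preserves the balance $G_2\cdot G_{4/3}\lessapprox 1$. Rescaling changes the ambient scale from $R$ to $R\alpha^2$ and redistributes the regularity factors $\kappa_{r_k}$, so the correct scale-adapted version of \eqref{want-3} must be invoked; meanwhile the non-concentration of $\ZT_{\lambda,\nu}$ from Lemma \ref{non-concentration-lem}, together with Lemmas \ref{l4-ortho-lem-1}--\ref{l4-ortho-lem-2}, is essential to keep the overlap of different tubes and caps under control. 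Ensuring that the cumulative pigeonhole, broad-narrow, and rescaling losses remain only polylogarithmic in $R$ (so that they fit inside the $\lessapprox$ notation) is the delicate bookkeeping at the heart of the argument.
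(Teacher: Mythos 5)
Your overall scaffolding (reduce to characteristic-function-like $f$, then interpolate an $\ell^2$-type estimate against an $\ell^{4/3}$-type estimate via H\"older/Cauchy--Schwarz) is indeed the shape of the paper's endgame in Section \ref{wrap-up}, but the two ingredient bounds you assert are not available, and with the correct versions of the tools you cite the product $G_2\cdot G_{4/3}$ does not close at $p=5/3$. On the $L^2$ side, the claim $G_2\lessapprox \alpha\lambda/R^2$ overshoots what broadness plus the refined $L^2$ estimates can give. The sharp broad/bilinear weighted gain is the square root: \eqref{want-3} gives a factor $(|F|/R^2)^{1/2}$, and the exponent-one version \eqref{want-1} carries the unavoidable loss $\prod_k\kappa_{r_k}^{-1}$; this is sharp, as the radial focusing example ($F=B_r$, $\widehat f$ spread over the whole circle, so every point of $F$ is broad) has $\|Sf\|_{L^2(F)}^2\sim (|F|/R^2)^{1/2}\|Sf\|_2^2$. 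Accordingly the paper's Proposition \ref{l2-prop1} only yields $G_2\lessapprox\prod_k\kappa_{r_k}^{-1}\,\lambda/(R^2\alpha)$, which exceeds your claimed bound by $\alpha^{-2}\prod_k\kappa_{r_k}^{-1}$, and no mechanism in your outline recovers that factor. On the $L^{4/3}$ side, you take the trivial bound: Carleson--Sj\"olin applied to each triple times the cardinality of $\{(\tau,T,j)\}$, i.e.\ $G_{4/3}\lesssim R^2/\lambda$ (your $R^2/(\alpha\lambda)$ is even weaker). Pairing the \emph{correct} $L^2$ gain (square root, or exponent one with the $\kappa$-loss) with the trivial counting bound reproduces exactly the computation sketched in the Introduction, which only reaches $p=16/9$: with $(\#j)|F_j|\sim R^2$ one gets $G_2\gtrsim(\#j)^{-1/2}$ and $G_{4/3}\sim\#j$, so $G_2G_{4/3}\sim(\#j)^{1/2}\not\lessapprox1$. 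The whole point of Section \ref{three propositions} is that this naive pairing is insufficient at $5/3$.

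What is missing is the actual content of the paper's argument: the $L^{4/3}$ estimate must itself carry structural gains, and the $L^2$ input must be split into two complementary estimates whose losses are cancelled by those gains. Concretely, the paper proves (i) Proposition \ref{l2-prop1}, $I_2^2\lessapprox\prod_k\kappa_{r_k}^{-1}(\lambda/(R^2\alpha))\|f\|_2^2$; (ii) Proposition \ref{l2-prop2}, $I_2^2\lessapprox (R^2\alpha/(\lambda\nu))\beta^{-1}\|f\|_2^2$, which requires the further pigeonholing in the parameter $\beta$, the local $L^2$ Lemma \ref{local-L2}, and the Kakeya-type Lemma \ref{kakeya-type-lem} built on Wolff's axiom (Lemma \ref{non-concentration-lem}); and (iii) Proposition \ref{l4/3-prop}, $I_{4/3}^{4/3}\lessapprox\beta^{1/3}\nu^{2/3}\prod_k\kappa_{r_k}^{2/3}\|f\|_{4/3}^{4/3}$, proved by dualizing to $L^4$, running a second broad--narrow dichotomy on the dual side, and using the $L^4$-orthogonality Lemmas \ref{l4-ortho-lem-1}--\ref{l4-ortho-lem-2} together with the refined $L^2$ bounds \eqref{l2-1-1} and \eqref{l2-2} inside that $L^4$ argument. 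Only the precise three-way product $(I_2^2)^2\cdot(I_2^2)\cdot(I_{4/3}^{4/3})^3$ makes the factors $\kappa_{r_k}$, $\beta$ and $\nu$ cancel, leaving $\lambda\nu/(R^2\alpha)\lesssim1$ by \eqref{la-nu}. Your proposal never introduces $\beta$, never proves (or states) a non-trivial $L^{4/3}$ bound, and asserts an $L^2$ bound stronger than the sharp one; so as written the argument cannot be completed along the lines you describe.
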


%In the subsequent section, we will establish this theorem using three distinct estimates.

\section{Technical estimates and wrapping up the proof }%of Theorem \ref{reduction-lem}}
\label{three propositions}

In this section, we prove Proposition {\ref{reduction-lem}}.
Fix $\e>0$, and we aim to prove that \eqref{reduction-3} is true for this $\e$.
We first derive certain $L^2$-estimates and $L^{4/3}$-estimates, and then apply interpolation to complete the proof.

\smallskip

Before stating the first $L^2$ estimate, we recall that 
the parabolic rescaling mapping $\mathcal L$ defined in (\ref{para-rescaling}) transforms the image of a $R\al\times R$ tube $T\in\ZT$, denoted as $\cl(T)$,
into a rectangle of dimensions $ R\alpha^2\log R \times R(\alpha\log R)^2$, which is contained in 
an $R(\al\log R)^2$-ball.

%Recall \eqref{para-rescaling}, which suggests that the image of a $R\al\times R$ tube $T\in\cT$, $\cl(T)$, is essentially an $R(\al\log R)^2$-ball.
\begin{definition}
Let $r\leq R(\al\log R)$. For a given $R\al\times R$ tube $T$, we call an $r\times r(\al\log R)^{-1}$ tube $Q$ an {\bf $r$-pseudo ball}, if $\cl(Q)\subset \cl(T)$ is an $r(\al\log R)$-ball. 
\end{definition}

Initially, we establish a basic yet useful lemma regarding $r$-pseudo balls.

\begin{lemma}\label{lem-5.2}
Let $T\in \mathbb T_{\la, \nu}$ and $\wt T$ be an $R^{1/2}\times R$ tube contained in $T$.
Then 
\begin{equation}
\label{pseudo-ball-count}
    \#\{Q:Q\cap \wt T\cap T_j\not=\varnothing\}\lesssim\ka_n (\al\log R)R^{1/2},
\end{equation}
where $Q$ denotes an $R^{1/2}$-pseudo ball and $T_j$ is given as in Proposition \ref{reduction-lem}.
\end{lemma}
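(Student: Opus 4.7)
The strategy is to pass to the rescaled picture via $\cl$, where $\cl(T_j)\subset E_4$ enjoys quantitative regularity at the coarsest scale $r_n=R\al^2$, and then convert the area bound coming from that regularity into a counting bound on pseudo-balls.

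\textbf{Step 1 (rescale and identify sizes).} Under $\cl$, the sub-tube $\wt T\subset T$ of dimensions $R^{1/2}\times R$ is mapped to a rectangle $\cl(\wt T)$ of dimensions
\begin{equation*}
R^{1/2}(\al\log R)\times R(\al\log R)^2 \;=\; r_n^{1/2}\log R\;\times\;r_n(\log R)^2,
\end{equation*}
that is, an $r_n^{1/2}\times r_n$-tube anisotropically inflated by log factors. Similarly, each $R^{1/2}$-pseudo ball $Q$ is mapped to a disc $\cl(Q)$ of radius $\sim r_n^{1/2}\log R$, and because the $Q$'s are disjoint cells tiling $T$, the discs $\cl(Q)$ are pairwise disjoint.

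\textbf{Step 2 (apply $\ka_n$-regularity).} Condition (1) of Definition \ref{kappa-regular}, applied to $E_4$ at the scale $r_n$, says that $|N_{r_n^{1/2}}(E_4)\cap T'|\lesssim \ka_n r_n^{3/2}$ for every $r_n^{1/2}\times r_n$-tube $T'$. I would cover (a slight enlargement of) $\cl(\wt T)$ by $O((\log R)^3)$ genuine $r_n^{1/2}\times r_n$-tubes and sum to obtain
\begin{equation*}
\bigl|N_{r_n^{1/2}}(E_4)\cap \cl(\wt T)\bigr|\;\lesssim\;(\log R)^{3}\,\ka_n\, r_n^{3/2}.
\end{equation*}

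\textbf{Step 3 (area count).} Each admissible $Q$ (one satisfying $Q\cap\wt T\cap T_j\neq\varnothing$) yields a disc $\cl(Q)$ that meets both $\cl(\wt T)$ and $\cl(T_j)\subset E_4$. Hence $\cl(Q)$ lies in an $O(r_n^{1/2}\log R)$-neighborhood of $E_4\cap \cl(\wt T)$, whose area is of the same order as the bound in Step 2 (after harmlessly inflating the covering tubes by $\log R$). Combining disjointness of the $\cl(Q)$'s with the lower bound $|\cl(Q)|\gtrsim r_n(\log R)^2$ produces
\begin{equation*}
\#\{Q:Q\cap\wt T\cap T_j\neq\varnothing\}\;\lesssim\;\frac{(\log R)^{O(1)}\,\ka_n\, r_n^{3/2}}{r_n(\log R)^2}\;\lesssim\;\ka_n\,(\al\log R)\,R^{1/2},
\end{equation*}
which is the desired estimate.

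The main (and essentially only) delicate point is the book-keeping of the $\log R$ factors: the map $\cl$ anisotropically distorts the plane by $\al\log R$ in one direction and $(\al\log R)^2$ in the other, so $\cl(\wt T)$ is only a log-inflated $r_n^{1/2}\times r_n$-tube and $\cl(Q)$ is only a log-inflated $r_n^{1/2}$-disc, forcing one to apply the $\ka_n$-regular bound on a slightly coarser scale than stated. Because the target estimate itself contains a factor $\log R$, and because the regularity bound has extra room by construction, these logarithmic inflations are comfortably absorbed, and no tool beyond Definition \ref{kappa-regular}(1) and the disjointness of the $\cl(Q)$'s is needed.
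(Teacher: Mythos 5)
You take the same route as the paper: rescale by $\cl$, invoke the top-scale $\ka_n$-regularity of $N_K(\cl(T_j))$ through condition (1) of Definition \ref{kappa-regular}, and convert the resulting area bound into a count of the pairwise disjoint discs $\cl(Q)$. The skeleton is right, but the $\log R$ bookkeeping in your Step 3 has a gap. Regularity controls only the $r_n^{1/2}$-neighborhood of the regular set, whereas under your normalization $r_n=R\al^2$ each disc $\cl(Q)$ has radius $\sim r_n^{1/2}\log R$, so what you actually need is the measure of the $O(r_n^{1/2}\log R)$-neighborhood of $E_4\cap\cl(\wt T)$. That larger neighborhood is \emph{not} "of the same order" as your Step-2 quantity: it can exceed it by a factor $(\log R)^2$ (e.g.\ take $E_4\cap\cl(\wt T)$ to consist of $\sim\ka_n r_n^{1/2}$ $K$-balls per genuine $r_n^{1/2}\times r_n$-tube, pairwise separated by $\gg r_n^{1/2}\log R$). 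The repair is a Vitali-type step: choose a maximal $r_n^{1/2}$-separated subset of $E_4\cap\cl(\wt T)$, bound its cardinality by $(\log R)^{3}\ka_n r_n^{1/2}$ using your Step 2, and note that only $O(1)$ of the disjoint discs $\cl(Q)$ can meet each $r_n^{1/2}$-ball around such a point; the $(\log R)^2$ excess in the neighborhood area then cancels against $|\cl(Q)|\sim r_n(\log R)^2$. But what comes out is $\#\{Q\}\lesssim\ka_n\al R^{1/2}(\log R)^{3}$, not the single-log bound \eqref{pseudo-ball-count}; in fact, with only scale-$R\al^2$ regularity the single-log statement can genuinely fail by up to $(\log R)^2$, so no bookkeeping under that reading will recover it.

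The paper's proof avoids this by implicitly taking the top rescaled scale to be $r_n=R(\al\log R)^2$, which is what the construction in Section \ref{Broad-narrow-section} actually produces ($K^n=R(\al\log R)^2$; the "$r_n=R\al^2$" in condition (5) of Proposition \ref{reduction-lem} drops the logarithms). With that normalization $\cl(\wt T)$ is literally an $r_n^{1/2}\times r_n$-tube and $\cl(Q)$ an $r_n^{1/2}$-ball, so condition (1) of Definition \ref{kappa-regular} applies to $\cl(\wt T)$ directly (constant-factor enlargements of the neighborhood are harmless), and dividing $\ka_n r_n^{3/2}$ by the disc area $\sim r_n$ yields exactly $\ka_n r_n^{1/2}=\ka_n(\al\log R)R^{1/2}$ with no covering losses. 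None of this matters downstream --- the lemma is only ever used inside $\lessapprox$ estimates, where a $(\log R)^{O(1)}$ loss is absorbed --- but as a proof of the lemma as stated, your Step 3 needs the Vitali step made explicit and, to obtain the stated single $\log R$, the top scale read off as $R(\al\log R)^2$ rather than $R\al^2$.
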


\begin{proof}
Observe that $\cl(\wt T)$ becomes an $R^{1/2}(\al\log R)\times R(\al\log R)^2$ tube. 
 Since $N_K(\cl(T_j))$ is regular with factors $(\ka_{r_1},\ldots,\ka_{r_n})$, we have
\begin{equation}\label{Q5.2}
    \#\{\cl(Q):\cl(Q)\cap \cl(\wt T)\cap \cl(T_j)\not=\varnothing\}\lesssim\ka_n (\al\log R)R^{1/2},
\end{equation}
where $\cl(Q)$, an $R^{1/2}(\al\log R)$-ball, is the image set of an $R^{1/2}$-pseudo ball $Q$. 
Clearly, (\ref{pseudo-ball-count}) follows from (\ref{Q5.2}). 
\end{proof}

\bigskip

\subsection{An application of the refined $L^2$ estimates for the regular set}

The subsequent lemma emerges as a corollary of Proposition \ref{l2-refine-prop}, presenting refined $L^2$-estimates for the regular set.

\begin{lemma}
With the same assumption as Proposition \ref{reduction-lem}, we have
\begin{equation}
\label{l2-1}
    \big\|S_{\tau, j}f\Id_{T_j}\big\|_2^2\lessapprox\prod_{k=1}^n\ka_{r_k}^{-1}\Big(\frac{\la}{R^2\al}\Big)\big\|S_{\tau^*, j}f\big\|_2^2,
\end{equation}
and
\begin{equation}
\label{l2-1-1}
    \big\|S_{\tau, j}(f\Id_{T_j})\big\|_2^2\lessapprox\prod_{k=1}^{n}\ka_{r_k}\big\|f\Id_{T_j}\big\|_2^2.
\end{equation}
In addition, if $Q\subset T$ is an $R^{1/2}$-pseudo ball, then
\begin{equation}
\label{l2-2}
    \big\|S_{\tau, j}(f\Id_{T_j}\Id_Q)\big\|_{2}^2\lessapprox (R^{-1/2}\al^{-1})\prod_{k=1}^{n-1}\ka_{r_k}\|f\Id_{T_j}\Id_Q\|_2^2.
\end{equation}
\end{lemma}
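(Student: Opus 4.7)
The plan is to apply Proposition~\ref{l2-refine-prop} (combined with Remark~\ref{refine-l2-remark}) in parabolically rescaled coordinates. Set $R' := R\al^2$ and apply $\cl$ from \eqref{para-rescaling}. By \eqref{before-K}, $S_{\tau, j}$ becomes a multiplier operator $S$ whose multiplier is a smooth cutoff of a $1/R'$-neighborhood of a $C^2$ curve with $|\phi''|\sim 1$, which is exactly the setting covered by the remark. The image $\cl(T_j)$ lies in an $R'$-ball and, by Proposition~\ref{reduction-lem}(5), $F := N_K(\cl(T_j))$ is regular with factors $(\ka_{r_1}, \ldots, \ka_{r_n})$. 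The Jacobian $|\det \cl| = (\al\log R)^3$ relates $L^2$ norms on the two sides and yields the crucial identity $|F|/R'^2 \lessapprox \la/(R^2\al)$. Moreover $M = 10\log\log R \ge 3n$ since $n \sim \log\log R$, so Proposition~\ref{l2-refine-prop} is applicable.

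To establish \eqref{l2-1} I will promote the pointwise $\al$-broadness from Proposition~\ref{reduction-lem}(3) to a broad norm bound and then invoke \eqref{want-1}. Under $\cl$, an $\al$-cap $\tau' \subset \tau^*$ corresponds to a $(\log R)^{-1}$-cap on the rescaled curve, matching the collection $\Si$ of Definition~\ref{def-broad-norm}. Therefore $\al$-broadness produces, at every $\cl(x)$ with $x\in T_j$, $M$ many $(\log R)^{-1}$-caps $\sigma'$ with $|S_{\sigma'}\wt f(\cl(x))| \gtrapprox |S_\sigma \wt f(\cl(x))|$, where $\sigma$ is the rescaled $\tau$. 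After a standard pigeonholing over the possible $M$-tuples of caps (whose total count is polylogarithmic and therefore harmless under $\lessapprox$), one obtains $\|S_\sigma\wt f\|_{L^2(B)}^2 \lessapprox \|S\wt f\|_{L^2_{\Br_M}(B)}^2$ for each $K$-ball $B \subset F$. Summing over $B \subset F$ and invoking \eqref{want-1}:
\[
\|S_\sigma \wt f\|_{L^2(\cl(T_j))}^2 \lessapprox \sum_{B\subset F}\|S\wt f\|_{L^2_{\Br_M}(B)}^2 \lessapprox \prod_{k=1}^n \ka_{r_k}^{-1}\cdot\frac{|F|}{R'^2}\|S\wt f\|_2^2,
\]
which unrescales to \eqref{l2-1}.

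Estimate \eqref{l2-1-1} is a direct duality consequence of \eqref{want-2}. With $\wt g := (f\Id_{T_j})\circ \cl^{-1}$ supported in $\cl(T_j)\subset F$, testing against any $h$ with $\|h\|_2\le 1$:
\[
|\langle S(\wt g\,\Id_F), h\rangle| = |\langle \wt g\,\Id_F, S^*h\rangle| \le \|\wt g\,\Id_F\|_2\cdot\|(S^*h)\Id_F\|_2,
\]
and \eqref{want-2} applied to $S^*$ (whose multiplier has the same curved-annular structure) bounds the last factor by $\bigl(\prod_k\ka_{r_k}\bigr)^{1/2}\|h\|_2$. Squaring the supremum over $h$ and reversing $\cl$ yields \eqref{l2-1-1}. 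For \eqref{l2-2} the pseudo-ball $Q$ is tailored so that $\cl(Q)$ is exactly an $r_{n-1}$-ball with $r_{n-1}\sim R'^{1/2}$. Inside $\cl(Q)$, the set $F\cap\cl(Q)$ retains regular structure only at scales $r_1,\ldots,r_{n-1}$, and the multi-scale step of Proposition~\ref{l2-refine-prop} truncated to these scales produces $\|Sh\|_{L^2(F\cap\cl(Q))}^2 \lessapprox \prod_{k=1}^{n-1}\ka_{r_k}\|Sh\|_{L^2(\cl(Q))}^2$. The remaining factor $R^{-1/2}\al^{-1} = R'^{-1/2}$ is supplied by a local $L^2$ estimate $\|Sh\|_{L^2(\cl(Q))}^2\lessapprox R'^{-1/2}\|h\|_2^2$ on an $R'^{1/2}$-ball, which is precisely the rescaled analog of Lemma~\ref{local-L2-0} (with $\al'=R'^{-1/2}$) and proved by the same $TT^*$/stationary phase calculation. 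Composing the two bounds and dualizing as for \eqref{l2-1-1}, but now restricted to $\cl(Q)$, yields \eqref{l2-2}.

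The main technical obstacle is the broadness translation used in \eqref{l2-1}: one must carefully align the pointwise broadness in Lemma~\ref{broad-narrow} with the broad norm of Definition~\ref{def-broad-norm} after parabolic rescaling, handle the pigeonholing that upgrades the pointwise bound to an $L^2$-in-$B$ broad norm bound, and verify that the parameters $K$, $M$, and $n$ are compatible with the hypothesis of Proposition~\ref{l2-refine-prop}. The other two estimates are more routine applications of \eqref{want-2} via duality, combined in \eqref{l2-2} with the local $L^2$ estimate at the top scale.
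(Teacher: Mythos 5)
Your treatments of \eqref{l2-1-1} and \eqref{l2-2} are essentially sound and close to the paper's: the paper also argues by duality and a single application of \eqref{want-2}, handling the pseudo-ball by observing that $\cl(T_j\cap Q)$ is regular with factors $(\ka_{r_1},\ldots,\ka_{r_{n-1}},R^{-1/2}(\al\log R)^{-1})$, i.e.\ it folds your separate ``local $L^2$ gain on an $R'^{1/2}$-ball'' into the top-scale regularity factor; your split into a truncated multi-scale step plus a rescaled Lemma \ref{local-L2-0} is an equivalent organization.

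The proof of \eqref{l2-1}, however, has a genuine gap, located exactly at your ``crucial identity'' $|F|/R'^2\lessapprox\la/(R^2\al)$ with $F=N_K(\cl(T_j))$. One has $|\cl(T_j)|=(\al\log R)^3|T_j|\lesssim(\al\log R)^3\la$, but passing to the $K$-neighborhood can inflate the measure enormously: each unit ball of $T_j$ is mapped by $\cl$ to a set of measure $(\al\log R)^3\ll K^2$, while its $K$-neighborhood has measure $\gtrsim K^2$, so $|N_K(\cl(T_j))|$ can exceed $(\al\log R)^3\la$ by a factor as large as $K^2(\al\log R)^{-3}$ (take $\la$ scattered unit balls and $\al$ close to $R^{-1/2}$), and this factor is not $O(R^\e)$. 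Since your chain bounds $\|S_\si\wt f\|_{L^2(\cl(T_j))}^2$ by $\sum_{B\subset F}\|S\wt f\|^2_{L^2_{\Br_M}(B)}$ and then by $\prod_k\ka_{r_k}^{-1}(|F|/R'^2)\|S\wt f\|_2^2$, it only yields \eqref{l2-1} with $|N_K(\cl(T_j))|/R'^2$ in place of $\la/(R^2\al)$, which is too weak. The missing idea is a density gain: the paper first pigeonholes $\cl(T_j)$ to a subset $E_6$ whose density in unit cubes is uniform, uses that $|S_{\tau,j}f\circ\cl|$ and $|S_{\si,j}f\circ\cl|$ are essentially constant at unit scale to upgrade the pointwise broadness to comparisons of full unit-cube $L^2$ norms while retaining the factor $|E_6\cap B|/|B|\sim|E_6|/|N_1(E_6)|$, and only then passes to $K$-balls and applies \eqref{want-1}; the ratio $|E_6|/|N_1(E_6)|$ cancels the $|F|\lesssim K^2|N_1(E_6)|$ produced by \eqref{want-1} and leaves $|E_6|\le(\al\log R)^3\la$, which is what produces $\la/(R^2\al)$. (Two smaller points in the same step: the caps furnished by $\al$-broadness depend on the point, so the per-ball upgrade should discard the top $M-1$ caps ranked by full-ball norms rather than pigeonhole over $M$-tuples as a black box, and the number of such tuples is $(\log R)^{O(\log\log R)}$, not polylogarithmic, though still $\lessapprox 1$; these are repairable, unlike the measure-inflation issue.)
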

\begin{proof}
When $\alpha = R^{-1/2}$, the desired estimates follow directly from the essentially constant property established in Lemma \ref{essentially-constant}.
For $\alpha> R^{-1/2}$, define $E_5:=\cl (T_j)$.
Note that, unlike $T_j$, $E_5$ may not be a union of unit balls. 
To address this issue, by dyadic pigeonholing, there exists a set $E_6\subset E_5$ such that
\begin{enumerate}
    \item $|E_6|\gtrapprox|E_5|=|\cl (T_j)|$.
    \item $|B\cap E_6|$ are the same up to a constant multiple for all dyadic unit balls $B\subset \cl(T)$, if $B\cap E_6\not=\varnothing$.
\end{enumerate}
Let $T_j':=\cl^{-1}(E_6)$. 
Since $|S_{\tau, j}f(x)\Id_{T_j}(x)|$ are about the same for all $x\in T_j$, 
\begin{equation}\label{5-0}
    \big\|S_{\tau, j}f\Id_{T_j}\big\|_2^2\lessapprox\big\|S_{\tau, j}f\Id_{T_j'}\big\|_2^2,
\end{equation}
and that for any dyadic unit ball $B$ with $B\cap E_6\not=\varnothing$,
\begin{equation}
    \frac{|B\cap E_6|}{|B|}\sim\frac{|E_6|}{|N_1(E_6)|}.
\end{equation}

Let $B$ be a dyadic unit ball with $B\cap E_6\not=\varnothing$. 
Since any $x\in T_j'$ is $\alpha$-broad, there exists a set of finitely overlapping $\al$-caps $\{\si\}$ with $\#\{\si\}\geq 10\log\log R$ such that
\begin{equation}
    |S_{\si, j}f\Id_{T_j'}\circ\cl |\gtrapprox|S_{\tau, j}f\Id_{T_j'}\circ\cl|
\end{equation}
in $E_6\cap B$ for any $\si$. 
Since $S_{\tau, j}f\Id_{T_j'}\circ\cl$, $S_{\si, j}f\Id_{T_j'}\circ\cl$ are essentially constant on a unit ball, we have, for any $\si$,
\begin{equation}\label{5'}
    \big\|S_{\si, j}f\circ\cl\big\|_{L^2(B)}^2\gtrapprox\big\|S_{\tau, j}f\circ\cl\big\|_{L^2(B)}^2\sim \frac{|N_1(E_6)|}{|E_6|}\big\|S_{\tau, j}f\circ\cl\big\|_{L^2(E_6\cap B)}^2.
\end{equation}
 Thus, by taking $M=10\log\log R$ in the broad norm as given in Definition \ref{def-broad-norm}, 
\begin{equation}\label{5''}
    \big\|S_{\tau, j}f\circ\cl\big\|_{L^2(B)}^2\lessapprox\big\|S_{\tau^*, j}f\circ\cl\big\|_{L^2_{\Br_M}(B)}^2.
\end{equation}
Applying (\ref{5'}) and (\ref{5''}) and  summing up all dyadic  unit balls $B$ with $B\cap E_6\not=\varnothing$, 
\begin{equation}\label{5'''}
    \big\|S_{\tau, j}f\Id_{T_j'}\circ\cl\big\|_2^2\lessapprox\frac{|E_6|}{|N_1(E_6)|}\sum_{B,B\cap E_6\not=\varnothing}\big\|S_{\tau^*, j}f\circ\cl\big\|_{L^2_{\Br_M}(B)}^2.
\end{equation}

Let $F:=N_K(E_6)$, which is also a union of $K$-balls $\{B_K\}$. 
Then 
\begin{equation}\label{5-11}
    \sum_{B,B\cap E_6\not=\varnothing}\big\|S_{\tau^*, j}f\circ\cl\big\|_{L^2_{\Br_M}(B)}^2\lesssim K^2\sum_{B_K\subset F}\big\|S_{\tau^*, j}f\circ\cl\big\|_{L^2_{\Br_M}(B_K)}^2.
\end{equation}
Apply \eqref{want-1} in Proposition \ref{l2-refine-prop} (see also Remark \ref{refine-l2-remark}) so that
\begin{equation}\label{5.12}
    \sum_{B_K\subset F}\big\|S_{\tau^*, j}f\circ\cl\big\|_{L^2_{\Br_M}(B_K)}^2\lessapprox\prod_{k=1}^n\ka_{r_k}^{-1}\Big(\frac{|F|}{R^2(\al\log R)^4}\Big)\big\|S_{\tau^*, j}f\circ\cl\big\|_2^2.
\end{equation}
Note that $|F|\lesssim K^2|N_1(E_6)|\lessapprox|N_1(E_6)|$. From (\ref{5-0}), (\ref{5'''}), (\ref{5-11}) and (\ref{5.12}), it follows that 
\begin{equation}
    \big\|S_{\tau, j}f\Id_{T_j}\circ\cl\big\|_2^2\lessapprox\big\|S_{\tau, j}f\Id_{T_j'}\circ\cl\big\|_2^2\lessapprox\prod_{k=1}^n\ka_{r_k}^{-1}\Big(\frac{|E_6|}{R^2(\al\log R)^4}\Big)\big\|S_{\tau^*, j}f\circ\cl\big\|_2^2.
\end{equation}
Recall that $|E_5|\geq |E_6|\gtrapprox|E_5|=|\cl (T_j)|=(\al\log R)^3|T_j|$ and $|T_j|\lesssim\la$. 
Plug these back to the above estimate and rescale to obtain \eqref{l2-1}.

\medskip 

The proof for \eqref{l2-1-1} and \eqref{l2-2} essentially follows the same logic. 
Here, we'll solely focus on verifying \eqref{l2-2}. 
Employing duality, it is enough to establish, for an $L^2$ function $g$,
\begin{equation}
    \big\|S_{\tau, j}g\big\|_{L^2(T_j\cap Q)}^2\lessapprox (R^{-1/2}\al^{-1})\prod_{k=1}^{n-1}\ka_{r_k}\|S_{\tau,j}g\|_2^2.
\end{equation}
Since $\mathcal L(T_j)$ is regular with factors $(\kappa_{r_1},\ldots,\kappa_{r_n})$, it follows that $\cl(T_j\cap Q)$ is regular with factors $(\kappa_{r_1},\ldots,\kappa_{r_{n-1}}, R^{-1/2}(\alpha\log R)^{-1})$. 
By applying \eqref{want-2} from Proposition \ref{l2-refine-prop}, we obtain (\ref{l2-2}), as desired. \qedhere

\end{proof}

\begin{proposition}

\label{l2-prop1}
We have the following $L^2$ estimate
\begin{align}\label{5-16}
    \sum_\tau\sum_{T\in\ZT_{\tau, \la, \nu}}\sum_{j\in\cj_T}\big\|S_{\tau, j}f\Id_{T_j}\big\|_2^2
    \lessapprox\prod_{k=1}^n\ka_{r_k}^{-1}\Big(\frac{\la}{R^2\al}\Big)\big\|f\big\|_2^2.
\end{align}
\end{proposition}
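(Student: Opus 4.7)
The plan is to combine the single-tube refined $L^2$ estimate \eqref{l2-1} with two layers of orthogonality: a spatial one exploiting the disjointness of the parallel strips making up $\ZT_\tau$, and a Fourier one coming from the finite overlap of $\tau^*$-caps and of the annuli $A_{t_j,R}$.

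First, for every $(\tau,T,j)$ with $T\in\ZT_{\tau,\la,\nu}$ and $j\in\cj_T$ I would invoke \eqref{l2-1} to obtain
\begin{equation*}
 \|S_{\tau,j}f\,\Id_{T_j}\|_2^2 \lessapprox \prod_{k=1}^n \kappa_{r_k}^{-1}\,\frac{\lambda}{R^2\alpha}\,\|S_{\tau^*,j}f\|_{L^2(w(T))}^2,
\end{equation*}
where the right-hand side is localized by a Schwartz weight concentrated on $T$. This is the ``weighted'' form of \eqref{l2-1}: its proof applies \eqref{want-1} to a set $F\subset\cl(T)$ sitting inside a single $R(\alpha\log R)^2$-ball, and after unrescaling the resulting $L^2$ norm is essentially $\|S_{\tau^*,j}f\|_{L^2(w(T))}^2$ rather than the global $\|S_{\tau^*,j}f\|_2^2$ stated for simplicity in \eqref{l2-1}.

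Next, I would sum over $T$ for fixed $(\tau,j)$. Since $\ZT_\tau$ is a partition of $B_R$ into disjoint parallel $R\alpha\times R$ strips, the weights satisfy $\sum_{T\in\ZT_\tau}w_T\lesssim 1$ pointwise, so $\sum_{T}\|S_{\tau^*,j}f\|_{L^2(w(T))}^2\lesssim\|S_{\tau^*,j}f\|_2^2$. This is the critical step, because it avoids paying the multiplicity $N_{\tau,j}=\#\{T\in\ZT_{\tau,\la,\nu}:j\in\cj_T\}$, which could be as large as $1/\alpha$. Finally, since each $\tau^*$ is a $2(\log R)\alpha$-cap overlapping $O(\log R)$ neighbors and the annuli $A_{t_j,R}$ are $R^{-1}$-separated with finite overlap, pointwise in frequency we have $\sum_{\tau,j}|a_{\tau^*}(\xi)|^2|a(R(t_j-|\xi|))|^2\lesssim \log R$; Plancherel then yields $\sum_{\tau,j}\|S_{\tau^*,j}f\|_2^2\lessapprox\|f\|_2^2$. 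Chaining the three steps delivers exactly the claimed bound $\prod_k \kappa_{r_k}^{-1}\,(\lambda/R^2\alpha)\,\|f\|_2^2$.

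The main obstacle is verifying that \eqref{l2-1} truly yields this tube-localized form; without such a localization, the summation in the second step would cost an extra $\#\ZT_\tau\sim 1/\alpha$ and the final bound would be off by exactly a factor of $\alpha$. Everything else is clean bookkeeping: a pointwise pigeonhole in Fourier space for the third step, and the defining property that the strips of $\ZT_\tau$ are disjoint for the second.
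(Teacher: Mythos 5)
Your proof is correct and follows essentially the same route as the paper, whose entire argument is to sum \eqref{l2-1} over all $(\tau,T,j)$ and conclude by Plancherel together with the $\log R$-overlap of the caps $\tau^*$. Your extra care about the tube-localized form $\big\|S_{\tau^*,j}f\big\|_{L^2(w(T))}$ is the right reading of that one-line argument: summing the globally stated \eqref{l2-1} would indeed cost the multiplicity $\#\{T\in\ZT_{\tau,\la,\nu}:j\in\cj_T\}$ (which can be as large as $1/\al$), and the localization you invoke is exactly what the proof of \eqref{l2-1} delivers, since it applies Proposition \ref{l2-refine-prop} inside the single $R(\al\log R)^2$-ball containing $\cl(T)$ and the underlying Lemma \ref{refined-l2-single-lem} is already stated with the weight $w(B_r)$.
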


\begin{proof}
Since the kernel of $S_{\tau,j}$ decays rapidly outside the $R\alpha \times R$ tube centered at the origin in the direction $e(\tau)$, with $w_T$ being a weight that decreases rapidly outside $T$, we may strengthen \eqref{l2-1} to the superficially stronger estimate
\begin{equation}
    \big\|S_{\tau, j} f \, \mathbf{1}_{T_j}\big\|_2^2 
    \;\lessapprox\; 
    \prod_{k=1}^n \kappa_{r_k}^{-1} \Big(\frac{\lambda}{R^2 \alpha}\Big) 
    \big\| S_{\tau^*, j} f \cdot w_T \big\|_2^2,
\end{equation}
Note that $\{\tau^*\}_\tau$ are $\log R$-overlapping.
Thus, \eqref{5-16} follows by summing the above estimate over all possible $j, T, \tau$ and then applying Plancherel's theorem.
\end{proof}

\bigskip

\subsection{$L^2$ estimates via the local $L^2$ conclusion and Kakeya-type results}
\label{local-L2-for-model-operator}

We need one more quantitative version of the $L^2$ estimates. 
To achieve this, let us make further reductions on the left-hand side of 
\eqref{reduction-3}.  

\smallskip

%For the remaining two propositions, we need more reductions on the left-hand side of \eqref{reduction-3}. 

Partition $B_R$ into $R\al$-balls $B$'s.  By pigeonholing, we can find a collection of $R\al$-balls, denoted by $\cb$,  and a subset $\ZT'_{\tau, \la, \nu}\subset\ZT_{\tau, \la, \nu}$ so that 
\begin{enumerate}
    \item $\|f\Id_B\|_p$, for all $B\in\cb$,   are the same up to a constant multiple;
    \item for all $T\in\ZT'_{\tau, \la, \nu}$,
    \begin{equation}
        \sum_{j\in\cj_T}\big\|S_{\tau, j}\big(\sum_{B\in\cb}f\Id_B\big)\Id_{T_j}\big\|_p^p
    \end{equation}
    are the same up to a constant multiple;
    \item we have
    \begin{equation}
        \sum_\tau\sum_{T\in\ZT_{\tau, \la, \nu}}\sum_{j\in\cj_T}\big\|S_{\tau, j}f\Id_{T_j}\big\|_p^p\lessapprox\sum_\tau\sum_{T\in\ZT'_{\tau, \la, \nu}}\sum_{j\in\cj_T}\big\|S_{\tau, j}\big(\sum_{B\in\cb}f\Id_B\big)\Id_{T_j}\big\|_p^p.
    \end{equation}
\end{enumerate}
To simplify notation, we continue to use $\ZT_{\tau, \lambda, \nu}$ to represent $\ZT'_{\tau, \lambda, \nu}$ and $\ZT_{\lambda, \nu}$ to represent $\bigcup_\tau\ZT'_{\tau, \lambda, \nu}$.

\medskip

Let $\cb_\be\subset\cb$ be the collection of $R\al$-balls such that $2B$ intersects $\sim\be$ many $R\al\times R$-tubes in $\ZT_{\lambda, \nu}$ for each $B\in\cb_\be$. 
Recall that $\e$ is fixed at the beginning of Section \ref{three propositions}.
By several steps of dyadic pigeonholing, we can find a dyadic number $\be$ and a collection of tubes $\ZT_\be\subset\ZT_{\la,\nu}$ so that 
\begin{enumerate}
    \item $|\ZT_\be|\gtrapprox|\ZT_{\la, \nu}|$.
    \item For each $T\in\ZT_\be$ with direction $e(\tau)$, we have
    \begin{equation}
        \sum_{j\in\cj_T}\big\|S_{\tau, j}\big(\sum_{B\in\cb}f\Id_B\big)\Id_{T_j}\big\|_p^p\lessapprox\sum_{j\in\cj_T}\big\|S_{\tau, j}\big(\sum_{B\in\cb_\be}f\Id_B\big)\Id_{T_j}\big\|_p^p.
    \end{equation}
    \item For each $T\in\ZT_\be$, $\#\{B\in\cb_\be: B\subset R^{\e^2}T\}$ are the same up to a constant multiple.
\end{enumerate}
Denote $\ZT_{\tau,\be}=\ZT_\be\cap\ZT_{\tau, \la, \nu}$. As a result, we have
\begin{align}
\label{reduction-before-local-l2}
    %\sum_\tau\sum_{T\in\ZT_{\tau, \la, \nu}}\sum_{j\in\cj_T}\big\|S_{\tau, j}f\Id_{T_j}\big\|_p^p&\lessapprox\sum_\tau\sum_{T\in\ZT_{\tau, %\la,\nu}}\sum_{j\in\cj_T}\big\|S_{\tau, j}\big(\sum_{B\in\cb}f\Id_B\big)\Id_{T_j}\big\|_p^p\\
\sum_{T\in\ZT_{\la,\nu}}\!\sum_{j\in\cj_T}\!\!\big\|S_{\tau, j}\big(\sum_{B\in\cb}f\Id_B\big)\Id_{T_j}\big\|_p^p &\lessapprox \sum_\tau\!\!\sum_{T\in\ZT_{\tau,\be}}\sum_{j\in\cj_T}\!\big\|S_{\tau, j}\big(\sum_{B\in\cb_\be}f\Id_B\big)\Id_{T_j}\big\|_p^p.
\end{align}

\medskip

Our second proposition in this section is the following $L^2$ estimate.

\begin{proposition}
\label{l2-prop2}
For a fixed $\be$, we have 
\begin{equation}
\label{second-L2}
     \sum_\tau  \sum_{T\in\ZT_{\tau,\be}}\sum_{j\in\cj_T}\big\|S_{\tau, j}\big(\sum_{B\in\cb_\be}f\Id_{ B}\big)\Id_{T_j}\big\|_2^2\lessapprox R^{\e^2}\Big(\frac{R^2\al}{\la\nu}\Big)\be^{-1}\big\|f\big\|_2^2.
\end{equation}
\end{proposition}

The proposition is built on Lemma \ref{local-L2},  a local $L^2$-estimate associated with a single ball, and the following Kakeya-type lemma.

\begin{lemma}
\label{kakeya-type-lem}
For fixed $\be$, we have 
\begin{equation}
    (\#\cb_\be)/(\# \ZT_{\la,\nu})\lesssim[(R^2\al)/(\la\nu)]\cdot(\al\be^2)^{-1}.
\end{equation}
\end{lemma}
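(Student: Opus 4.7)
The plan is to set up a double-counting argument on the set of triples
$$
\mathcal{P}:=\{(B,T_1,T_2)\;:\;B\in\cb_\be,\ T_1,T_2\in\ZT_{\la,\nu},\ T_1\neq T_2,\ 2B\cap T_i\neq\varnothing\text{ for }i=1,2\}.
$$
The lower bound comes directly from the definition of $\cb_\be$: each $B\in\cb_\be$ is met by $\sim\be$ distinct tubes of $\ZT_{\la,\nu}$ through $2B$, so $\#\mathcal{P}\gtrsim\be^{2}\#\cb_\be$. The upper bound is where Wolff's axiom enters.

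The key geometric input is that for a transverse pair $(T_1,T_2)$ of tubes at angle $\theta$, the parallelogram $2T_1\cap 2T_2$ has area $\sim(R\al)^{2}/\sin\theta$, hence it can contain at most $\lesssim 1/\sin\theta$ disjoint $R\al$-cubes $B$ with $2B$ meeting both tubes. I would fix $T_1$ and stratify the remaining tubes by dyadic angle $\sin\theta(T_1,T_2)\sim 2^{-k}$ for $k\in\{0,\ldots,\log(1/\al)\}$. Any such $T_2$ meeting $T_1$ inside $B_R$ must be contained, up to a constant dilation, in an $R\cdot 2^{-k}\times R$ tube $\bar T$ aligned with and containing $T_1$, since the angular spread forces $T_2$ to stay within a cone of half-angle $2^{-k}$ issuing from a point of $T_1$. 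Because $R\al\le R\cdot 2^{-k}\le R$, Wolff's axiom (Lemma~\ref{non-concentration-lem}) with factor $M=R^{2}\al/(\la\nu)$ applies to $\bar T$ and gives $\#\{T_2\subset\bar T\}\le M\cdot 2^{-k}/\al$. Summing dyadically,
$$
\sum_{\substack{T_2\neq T_1\\ T_1\cap T_2\cap B_R\neq\varnothing}}\frac{1}{\sin\theta(T_1,T_2)}\;\lesssim\;\sum_{k}2^{k}\cdot\frac{M\cdot 2^{-k}}{\al}\;\lesssim\;\frac{M\log R}{\al}.
$$
Parallel pairs (same $\al$-cap) are handled separately: since $\ZT$ began as a partition of $B_R$ into disjoint $R\al\times R$ strips, only $O(1)$ tubes from a given $\al$-cap lie within distance $R\al$ of $T_1$, and each such pair contributes $\lesssim 1/\al$ shared cubes, a quantity already dominated by $M/\al$.

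Summing over $T_1\in\ZT_{\la,\nu}$ yields $\#\mathcal{P}\lesssim(\#\ZT_{\la,\nu})\cdot M/\al$ up to logarithms, and comparing with the lower bound gives
$$
\frac{\#\cb_\be}{\#\ZT_{\la,\nu}}\;\lesssim\;\frac{M}{\al\be^{2}}\;=\;\frac{R^{2}}{\la\nu\be^{2}},
$$
which is exactly the claim (the $\log R$ is absorbed into $\lessapprox$). The main obstacle I expect is the correct bookkeeping at the Wolff step: one must verify that the enclosing tube $\bar T$ genuinely contains $T_1$ (not just $T_2$) at the right orientation, and that the scale $R\cdot 2^{-k}$ lies in the admissible Wolff window $[R\al,R]$. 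Handling nearly-parallel and truly parallel pairs without losing the decisive $\be^{-2}$ is the other small technical point, but it should fall out of the disjointness of the original tube family.
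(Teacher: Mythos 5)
Your argument is essentially the paper's: the paper bounds $\be^2|B|\,(\#\cb_\be)\lesssim\int\big(\sum_{T\in\ZT_{\la,\nu}}\Id_T\big)^2\lesssim\sum_{T_1,T_2}|T_1\cap T_2|$ and then applies Lemma \ref{non-concentration-lem} through exactly the dyadic-angle Wolff computation you spell out, so your triple count is the same estimate divided by $|B|\sim(R\al)^2$. The only loose end is the case $\be\sim 1$, where distinct pairs of tubes through $2B$ need not exist and your lower bound $\#\mathcal{P}\gtrsim\be^2\#\cb_\be$ can fail; but that case is immediate, since each tube admits $\lesssim\al^{-1}$ cubes $B$ with $2B\cap T\neq\varnothing$ and $R^2\al/(\la\nu)\gtrsim 1$ by \eqref{la-nu} (the paper sidesteps this by keeping the diagonal $T_1=T_2$ in its double sum).
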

\begin{proof}
This follows from Kakeya estimates in $\ZR^2$. Indeed, by the definition of $\cb_\be$,
\begin{equation}\label{5.24}
    \be^2|B|(\#\cb_\be)\lesssim\int \Big(\sum_{T\in\ZT_{\la, \nu}}\Id_{T}\Big)^2\lesssim\sum_{T_1\in\ZT_{\la, \nu}}\sum_{T_2\in\ZT_{\la, \nu}}|T_1\cap T_2|.
\end{equation}
By Lemma \ref{non-concentration-lem}, we see that $\ZT_{\la,\nu}$ is a collection of $R\al\times R$-tubes obeying the one-dimensional ball condition with factor $R^2\al/(\la\nu)$.
Thus, by partitioning the $T_2$ into dyadic groups according to their angles with respect to $T_1$, we have
\begin{equation}\label{5.25}
    \sum_{T_2\in\ZT_{\la,\nu}}|T_1\cap T_2|\lessapprox R^2\al/(\la\nu)|T_1|.
\end{equation}
From (\ref{5.24}) and (\ref{5.25}), it follows that  
\begin{equation}
     \be^2|B|(\#\cb_\be)\lessapprox R^2\al/(\la\nu)|T|(\#\ZT_{\la,\nu}),
\end{equation}
which proves the lemma since $|B|\sim\al|T|$.
\end{proof}

\begin{proof}[Proof of Proposition \ref{l2-prop2}]
Note that the kernel of $S_{\tau, j}$ decays rapidly outside an $R\al\times R$ tube, centered at the origin, with the direction $e(\tau)$. 
Thus, either the tail dominates, in which case there is nothing to prove, or, by Cauchy–Schwarz, for a fixed $R^\alpha \times R$ tube $T \in \bigcup_\tau \ZT_{\tau, \beta}$, we have
\begin{align}
     \sum_{j\in\cj_T}\int\Big|S_{\tau, j}\big(\sum_{B\in\cb_\be}f\Id_{B}\big)\Id_{T_j}\big|^2\lesssim &\,\#\{B\in\cb_\be:B\subset R^{\e^2}T\}\\
     &\cdot \sum_{B\subset R^{\e^2}T}\sum_{j\in\cj_T}\int\big|S_{\tau, j}(f\Id_{B})\Id_{T_j}\big|^2.
\end{align}
Recall that $\#\{B\in\cb_\be:B\subset R^{\e^2}T\}$, for all $T\in\mathbb T_\be$,  are the same up to a constant multiple. Hence
\begin{equation}
    \#\{B\in\cb_\be:B\subset R^{\e^2}T\}\lesssim R^{\e^2}\be(\#\cb_\be)/(\#\ZT_\be),
\end{equation}
and we have
\begin{align}\label{5.29}
    &\sum_{\tau}\sum_{T\in\ZT_{\tau,\be}}\sum_{j\in\cj_T}\big\|S_{\tau, j}\big(\sum_{B\in\cb_\be}f\Id_{ B}\big)\Id_{T_j}\big\|_2^2 \\
    \lesssim&\,\sum_\tau\sum_{T\in\ZT_{\tau,\be}}\#\{B\in\cb_\be:B\subset R^{\e^2}T\}\sum_{B\subset R^{\e^2}T}\sum_{j\in\cj_T}\int\big|S_{\tau, j}(f\Id_{B})\Id_{T_j}\big|^2 \\
    \lesssim&\, R^{\e^2}\frac{\be(\#\cb_\be)}{\#\ZT_\be}\sum_{B\in\cb_\be} \sum_{\tau}\sum_{\substack{T\in\ZT_{\tau,\be}\\ R^{\e^2}T\supset B}}\sum_{j\in\cj_T}\int\big|S_{\tau, j}(f\Id_{B})\Id_{T_j}\big|^2. 
    \end{align}
Note that for fixed $\tau$, there are only $O(R^{\e^2})$ many $T\in\ZT_{\tau,\be}$ such that $B\subset R^{\e^2}T$. Invoke Lemma \ref{local-L2} so that
\begin{equation}\label{5.30}
    \sum_{\tau}\sum_{\substack{T\in\ZT_{\tau,\be}\\R^{\e^2}T\supset B}}\sum_{j\in\cj_T}\int\big|S_{\tau, j}(f\Id_{B})\Id_{T_j}\big|^2\lessapprox R^{\e^2}\al\big\|f\Id_B\big\|_2^2.
\end{equation}
Therefore, combining (\ref{5.29}) and (\ref{5.30}), we end up with
\begin{align}   
    \sum_{\tau}\sum_{T\in\ZT_{\tau,\be}}\sum_{j\in\cj_T}\big\|S_{\tau, j}\big(\!\sum_{B\in\cb_\be}\!f\Id_{ B}\big)\Id_{T_j}\big\|_2^2&\lessapprox R^{\e^2}\al\be(\#\cb_\be)/(\#\ZT_\be)\sum_{B\in\cb_\be}\!\big\|f\Id_B\big\|_2^2\\ &\lessapprox R^{\e^2}\al\be(\#\cb_\be)/(\#\ZT_\be)\big\|f\big\|_2^2,
\end{align}
and the proposition follows from Lemma \ref{kakeya-type-lem} since $\#\ZT_\be\gtrapprox\#\ZT_{\la, \nu}$. 
\end{proof}

\bigskip

\subsection{ $L^{4}$-estimates for the dual operator}

The last proposition in this section provides an $L^{4/3}$ estimate, which we establish by examining its dual form in $L^4$.

\begin{proposition}
\label{l4/3-prop}
Fix $\be$. We have the $L^{4/3}$ estimate 
\begin{equation}
\label{l4/3}
    \sum_{\tau}\sum_{T\in\ZT_{\tau,\la, \nu}}\sum_{j\in\cj_{T}}\int\big|S_{\tau, j}\big(\sum_{B\in\cb_\be}f\Id_{B}\big)\Id_{T_j}\big|^{4/3}\lessapprox R^{O(\e^2)}\be^{1/3}\nu^{2/3}\prod_{k=1}^n\kappa_{r_k}^{2/3}\big\|f\big\|_{4/3}^{4/3}.
\end{equation}
\end{proposition}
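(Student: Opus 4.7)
The strategy is to dualize the $L^{4/3}$-inequality into an $L^4$-inequality for the adjoint operator, and then establish the latter by combining the $L^4$ bi-orthogonality lemmas from Section 3.3 with the refined $L^2$ estimates of Section 5.1. Write $\tilde f=\sum_{B\in\cb_\be}f\Id_B$ and regard the left-hand side of (\ref{l4/3}) as $\|F\|_{L^{4/3}(X)}^{4/3}$, where $X$ is the discrete product space indexing triples $(\tau,T,j)$ together with the spatial variable. By duality and the (essential) self-adjointness of $S_{\tau,j}$, the claim reduces to the bound
\begin{equation*}
\Big\|\sum_{\tau,T,j}S_{\tau,j}^*(h_{\tau,T,j}\Id_{T_j})\Big\|_4\lessapprox\be^{1/4}\nu^{1/2}\prod_{k=1}^n\ka_{r_k}^{1/2}\Big(\sum_{\tau,T,j}\int|h_{\tau,T,j}|^4\Id_{T_j}\Big)^{\!1/4}
\end{equation*}
for arbitrary test functions $h_{\tau,T,j}$ supported on $T_j$, since raising this to the $4/3$-power recovers exactly the exponents on the right-hand side of (\ref{l4/3}). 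Denote the function inside the $L^4$-norm by $G$.

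To estimate $\|G\|_4^4=\|G^2\|_2^2$, frequency orthogonality is applied in two stages. Partition $B_R$ into $R\al$-cubes $\{B\}$ and dyadically pigeonhole an angular scale $\be_*\in[\al,1]$ (distinct from the spatial concentration parameter $\be$ in the statement) so that, on each cube $B$, only pairs $(\tau,\tau')$ of $\al$-caps with angular separation $\sim\be_*$ contribute. Applying Lemma \ref{l4-ortho-lem-1} with scale parameter $\be_*$ yields
\begin{equation*}
\int_B|G|^4\lessapprox\mu_1\mu_2\sum_{\tau,\tau'}\sum_{I,I'}\int_{2B}\Big|\sum_{j\in\cj_{\tau,I}}f_{\tau,j}\Big|^2\Big|\sum_{j'\in\cj_{\tau',I'}}f_{\tau',j'}\Big|^2,
\end{equation*}
where $f_{\tau,j}:=\sum_{T}S_{\tau,j}^*(h_{\tau,T,j}\Id_{T_j})$, the intervals $I,I'$ have length $\al\be_*^{-1}$, and $\mu_1\sim\be_*/\al$, $\mu_2$ tracks the intervals meeting $\cj_\tau$. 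A subsequent application of Lemma \ref{l4-ortho-lem-2} replaces each $\al$-cap $\tau$ by its canonical partition into $R^{-1/2}$-caps $\theta\subset\tau$, reducing each interior factor to a square sum over the finest frequency scale.

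The final step is to control the resulting bilinear integrals $\int|f_{\theta,j}|^2|f_{\theta',j'}|^2$. Spatially, $f_{\theta,j}$ and $f_{\theta',j'}$ are concentrated in transversal tubes $T,T'$, so only pairs with $T\cap T'\neq\varnothing$ contribute. One covers the intersection by $R^{1/2}$-pseudo balls $Q$, whose count inside $T\cap T'\cap T_j$ is controlled by Lemma \ref{lem-5.2}, and applies the refined $L^2$ bound (\ref{l2-2}) on each $Q$ together with (\ref{l2-1-1}) at the global scale, producing the factor $\prod_{k=1}^n\ka_{r_k}^2$. A Cauchy--Schwarz in the $j$-summation over $\cj_{\tau,I}$ supplies the $\nu^2$-factor, while the Wolff-axiom bound of Lemma \ref{non-concentration-lem} together with the Kakeya-type count of Lemma \ref{kakeya-type-lem} supplies the $\be$-dependence through the cube-to-tube incidence counting for the family $\cb_\be$. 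The principal difficulty lies in the combinatorial bookkeeping: one must verify that the pigeonholing parameters $\mu_1$, $\mu_2$, $\be_*$ and the $\al\be_*^{-1}$-interval counts cancel to produce exactly $\be\,\nu^2\prod\ka_{r_k}^2$, without any hidden polynomial losses in $R$ beyond the admissible $R^\e$.
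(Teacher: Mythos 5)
Your high-level plan (dualize to an $L^4$ bound for the adjoint, then combine Lemmas \ref{l4-ortho-lem-1} and \ref{l4-ortho-lem-2} with the refined $L^2$ estimates) is the same skeleton as the paper's proof, but two essential mechanisms are missing. First, your dual formulation drops the localization $\sum_{B\in\cb_\be}\Id_B$: in the paper the dual operator is $\sum_{\tau,T,j}\sum_{B\in\cb_\be}S_{\tau,j}(f_{T_j})\Id_B$, and the factor $\be$ enters \emph{only} because, by the definition of $\cb_\be$, each cube $B\in\cb_\be$ meets $\sim\be$ tubes of $\ZT_{\la,\nu}$, so when Lemma \ref{l4-ortho-lem-1} is applied on each such $B$ the cap-count parameter satisfies $\mu_1\lesssim\be$. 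Your choice $\mu_1\sim\be_*/\al$ is the trivial angular count, which can be as large as $\al^{-1}\sim R^{1/2}$ and destroys the interpolation in Section \ref{wrap-up} (the $\be^{-1}$ from Proposition \ref{l2-prop2} must cancel the $\be^{1/3}$ here). Your fallback, that Lemma \ref{non-concentration-lem} and Lemma \ref{kakeya-type-lem} "supply the $\be$-dependence," is not a mechanism available in this proposition: those lemmas feed the $L^2$ estimate of Proposition \ref{l2-prop2}, not the $L^{4/3}$ bound, and with the $\Id_B$ cutoff discarded there is no incidence structure left to exploit.

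Second, you have no case analysis in the angular separation, and this is where the hardest part of the paper's argument lives. A Whitney/pigeonholing reduction to separation $\sim\be_*\ge\al$ cannot cover the regime where the dominant interaction occurs \emph{inside a single $\al$-cap} (the paper's Case II, $\al_1<\al$, including the narrow endpoint $\al_1=R^{-1/2}$): there all tubes $T\in\ZT_{\tau,\la,\nu}$ point in essentially one direction, the $\cb_\be$ information is useless, and the required gain $\prod_k\ka_{r_k}^2$ is extracted through the pseudo-ball count of Lemma \ref{lem-5.2} (giving $\ka_{r_n}$), the refined estimate \eqref{l2-2} on each $R^{1/2}$-pseudo ball $Q$ (giving $\prod_{k\le n-1}\ka_{r_k}$), and the transversality of the $Y'$ strips to collapse the double sum over $(T_{\tau_1},Y')$. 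Your proposal instead applies \eqref{l2-1-1} "at the global scale" \emph{and} \eqref{l2-2} on each $Q$ to the same bilinear pieces, which would double-count the regularity factors (each function contributes $\prod_k\ka_{r_k}$ once, either via \eqref{l2-1-1} in the broad case or via Lemma \ref{lem-5.2} plus \eqref{l2-2} in the sub-$\al$ case, never both), and it never verifies the geometric bookkeeping ($T_{\tau_1}$, $\wt T$, $Y$, $Y'$) that makes the sub-$\al$ case close. As written, the argument would not produce the stated factor $\be^{1/3}\nu^{2/3}\prod_k\ka_{r_k}^{2/3}$.
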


\begin{proof}

Using the duality between $L^{4/3}$ and $L^4$, it suffices to prove that
\begin{equation}
    \Big|\!\int\sum_{\tau}\!\!\sum_{T\in\ZT_{\tau,\la, \nu}}\!\sum_{j\in\cj_{T}}S_{\tau, j} \big(\!\sum_{B\in\cb_\be}\!f\Id_{B}\big)f_{T_j}\Big|\!\lessapprox R^{O(\e^2)}\be^{1/4}\nu^{1/2}\!\prod_{k=1}^n\kappa_{r_k}^{1/2}\|f\|_{\frac{4}{3}}\|\{f_{T_j}\}\|_{L^4(\ell^4)},
\end{equation}
where $\|\{f_{T_j}\}\|_{L^4(\ell^4)}$ denotes 
\begin{equation}
   \bigg(\sum_{\tau}\sum_{T\in \ZT_{\tau, \la, \nu}}\sum_{j\in \mathcal J_T} \int \big| f_{T_j}\big|^4  \bigg)^{1/4}\,,
\end{equation}
and $f_{T_j}$ is a function supported on $T_j$.
%and $\{T_j\}_{j\in\mathcal J_T}$ are disjoint.  
%each $f_{T_j}$ is a function supported on $T_j$ for some $T\in\cT_\tau$ so that $\{f_{T_j}\}_{T_j}$ have disjoint supports. 
Note that 
\begin{align}
    \int\!\sum_{\tau}\!\!\sum_{T\in\ZT_{\tau,\la, \nu}}\!\sum_{j\in\cj_{T}}\!\!S_{\tau, j} \big(\!\!\sum_{B\in\cb_\be}f\Id_{B}\big)f_{T_j} %& \!\!=\!\!\int\sum_{\tau}\!\!\sum_{T\in\ZT_{\tau,\la, \nu}}\sum_{j\in\cj_{T}}\sum_{B\in\cb_\be}f\Id_{B}S_{\tau,j}f_{T_j}\\
    &=\int\!\! f\sum_{\tau}\!\!\sum_{T\in\ZT_{\tau,\la, \nu}}\!\sum_{j\in\cj_{T}}\!\sum_{B\in\cb_\be}\!\!S_{\tau,j}(f_{T_j})\Id_B.
\end{align}
Hence by H\"older's inequality, it suffices to prove that 
\begin{equation}
\label{before-bn-l4}
    \int \big|\sum_{\tau}\sum_{T\in\ZT_{\tau,\la, \nu}}\sum_{j\in\cj_{T}}\sum_{B\in\cb_\be}S_{\tau,j}(f_{T_j})\Id_B\big|^4\lessapprox R^{O(\e^2)}\be\nu^{2}\prod_{k=1}^n\ka_k^{2}\|\{f_{T_j}\}\|_{L^4(\ell^4)}^4.
\end{equation}
We will investigate the integrand of the above estimate, that is, 
\begin{equation}
\label{before-bn}
    \big|\sum_{\tau}\sum_{T\in\ZT_{\tau,\la, \nu}}\sum_{j\in\cj_{T}}\sum_{B\in\cb_\be}S_{\tau,j}(f_{T_j})\Id_B(x)\big|.
\end{equation}

In parallel with the broad-to-narrow reduction outlined in Lemma \ref{broad-narrow}, for every point $x$ within $B_R$, one of the following statements holds concerning \eqref{before-bn}:

\medskip

\noindent{\bf{A:}} $x$ is {\bf $\al_1$-broad} for some dyadic number $R^{-1/2}<\al_1\leq 1$. 
This means that there exist two caps $\tau_1$ and $\tau_1'$ with $(\log R)\alpha_1 \gtrsim \text{dist}(\tau_1,\tau_1') \gtrsim \alpha_1$, and $|\tau_1|,|\tau_1'|\sim\alpha_1$, such that one of the following conditions holds:
\begin{enumerate}
    \item If $\al_1 \geq \al$, then by restricting the summation over $\tau$ in \eqref{before-bn} to those contained in $\tau_1$ and $\tau_1'$, for $\om \in {\tau_1, \tau_1'}$,
    \begin{align}
        \eqref{before-bn}\lessapprox\big|\sum_{\tau\subset\om}\sum_{T\in\ZT_{\tau,\la, \nu}}&\sum_{j\in\cj_{T}}\sum_{B\in\cb_\be}S_{\tau,j}(f_{T_j})\Id_B(x)\big|.
    \end{align}
    \item If $\al_1<\al$, then there exists an $\al$-cap $\tau$ such that $\tau_1,\tau_1'\subset\tau$, and for $\om\in\{\tau_1,\tau_1'\}$, we have
    \begin{align}
        \eqref{before-bn}\lessapprox\big|\sum_{T\in\ZT_{\tau,\la, \nu}}&\sum_{j\in\cj_{T}}\sum_{B\in\cb_\be}S_{\om,j}(f_{T_j})\Id_B(x)\big|.
    \end{align}
\end{enumerate}

\noindent{\bf{B}:} $x$ is {\bf narrow}. This means that there exists an $R^{-1/2}$-cap $\theta$ and an $\al_1$-cap $\tau_1$ such that $\theta\subset\tau_1$ and
\begin{align}
    \eqref{before-bn}\lessapprox\big|\sum_{T\in\ZT_{\tau,\la, \nu}}\sum_{j\in\cj_{T}}\sum_{B\in\cb_\be}S_{\theta,j}(f_{T_j})\Id_B(x)\big|.
\end{align}
The proof of this dichotomy is almost identical to the one in Lemma \ref{broad-narrow}. More precisely, 
this broad-narrow process can be carried out through an iteration commencing with the initial scale for $\alpha_1$ approximately equal to 1. It concludes when reaching the scale of $R^{-1/2}$.  We omit its details here.  

\medskip 

Hence we see that there is a dyadic number $\alpha_1\in [R^{-1/2},1]$ such that 
\begin{align}
\label{before-case-study}
    \int \big|\eqref{before-bn}\big|^4\lessapprox\int_{E_{\al_1}} \big|\sum_{\tau}\sum_{T\in\ZT_{\tau,\la, \nu}}\sum_{j\in\cj_{T}}\sum_{B\in\cb_\be}S_{\tau,j}(f_{T_j})\Id_B\big|^4,
\end{align}
where  $E_{\alpha_1}$ refers to the set of $\alpha_1$-broad points if $\alpha_1$ falls within the interval $(R^{-1/2}, 1]$, and it signifies the set of narrow points if $\alpha_1=R^{-1/2}$. 

\medskip

Let's examine two distinct scenarios based on the magnitude of $\alpha_1$:

\begin{itemize}
\item [$\bullet$] Case I, where $\alpha_1\geq \alpha$;
\item [$\bullet$] Case II, where $\alpha_1<\alpha$.
\end{itemize}

\medskip

\noindent{\bf{Case I}},  where $\al_1\geq\al$. 
In this case, we begin by applying Lemma \ref{l4-ortho-lem-1} to bound the right-hand side of \eqref{before-case-study} by a square function at scale $\alpha$. 
Next, Lemma \ref{l4-ortho-lem-2} allows us to further control this coarse square function by one at the finer scale $R^{-1/2}$. 
We then invoke the bilinear Kakeya estimate to bound the resulting square function by an $L^2$ norm, in a standard manner. 
Finally, we recover the desired $L^4$ bound from the $L^2$ estimate via Hölder's inequality.

\smallskip

We now turn to the details.
Partition the interval $[1/2,2]$ into $\al\al_1^{-1}$-intervals $\{I\}$. For each $T\in\ZT_{\tau,\la, \nu}$, decompose $\cj_T$ as a union of $\cj_{T}(I)$, where $\cj_{T}(I)=\{j\in\cj_T:t_j\in I\}$. By dyadic pigeonholing, there exists a dyadic number $\mu$, a tube set $\ZT_{\tau,\la, \mu, \nu}$, and an index set $\cj_{T,\mu}$ for each $T\in\ZT_{\tau,\la, \mu, \nu}$ such that
\begin{enumerate}
    \item $\#\cj_{T,\mu}(I)\sim\mu$ whenever $\cj_{T,\mu}(I)=\{j\in\cj_{T, \mu}: t_j\in  I\}\not=\varnothing$.
    \item We have
    \begin{equation}
        \int \big|\eqref{before-bn}\big|^4\lessapprox\int_{E_{\al_1}} \big|\sum_{\tau}\sum_{T\in\ZT_{\tau,\la, \mu, \nu}}\sum_{j\in\cj_{T,\mu}}\sum_{B\in\cb_\be}S_{\tau,j}(f_{T_j})\Id_B\big|^4.
    \end{equation}
\end{enumerate}
Note that for each $T\in \ZT_{\tau, \la, \nu}$,  $\#\cj_T\sim\nu$. Thus we have
\begin{equation}
    \#\{I:\cj_{T, \mu}(I)\not=\varnothing\}\lesssim\nu/\mu.
\end{equation}

We provide proof details exclusively for the scenario where $\alpha_1 > R^{-1/2}$, as the narrow case can be addressed in a similar manner.
By the definition of $\al_1$-broad, we see that 
\begin{align}
   \int \big|\eqref{before-bn}\big|^4 \lesssim\sum_{\tau_1}\sum_{\substack{\tau_1'\\ \dist(\tau_1,\tau_1')\approx \al_1 }}\sum_{B\in\cb_\be}
  \mathfrak S_{\tau_1, \tau_1', B},
\end{align}
where $\tau_1, \tau'_1$ are $\alpha_1$-caps and $\mathfrak S_{\tau_1, \tau_1', B}$ is defined as 
\begin{equation}
\label{before-l4-ortho}
    \int_B\big|\sum_{\tau\subset\tau_1}\sum_{T\in\ZT_{\tau,\la, \mu, \nu}}\sum_{j\in\cj_{T, \mu}}S_{\tau,j}(f_{T_j})\big|^2\big|\sum_{\tau'\subset\tau_1'}\sum_{T'\in\ZT_{\tau',\la, \mu, \nu}}\sum_{j'\in\cj_{T',\mu}}S_{\tau',j'}(f_{T_{j'}'})\big|^2\,.
\end{equation}
Here $\tau, \tau'$ are $\alpha$-caps. 
Since, for a fixed $\tau$, the kernel of $S_{\tau, j}$ decays rapidly outside an $R^\alpha \times R$ tube centered at the origin and parallel to $T_j$, we can move the summations over $T$ and $T'$ outside the squares using the triangle inequality.
Recall that each $B\in\cb_\be$ is associated with $\lesssim \be$ many $\tau$.
Apply Lemma \ref{l4-ortho-lem-1} to \eqref{before-l4-ortho} so that 
\begin{align}
\nonumber
      \eqref{before-l4-ortho} &\lessapprox \be(\nu/\mu)\cdot\\
     &  \sum_{ \substack{\tau\subset\tau_1 \\ \nonumber \tau'\subset\tau_1'}}\sum_{\substack{T\in\ZT_{\tau,\la, \mu, \nu}\\T'\in\ZT_{\tau',\la, \mu, \nu}}}\sum_{I,I'}\int_{2B}\big|\sum_{j\in\cj_{T, \mu}(I)}S_{\tau,j}(f_{T_j})\big|^2\big|\sum_{j'\in\cj_{T', \mu}(I')}S_{\tau',j'}(f_{T_{j'}'})\big|^2.
\end{align}
Summing up all $\tau_1,\tau_1'$ and $B\in\cb_\be$, we obtain that 
\begin{align}
    & \,\,\,\,\, \int \big|\eqref{before-bn}\big|^4\\ \nonumber
    \lesssim & \,\be(\nu/\mu) \!\!\!\!\!\!\!\!\!\!\!\!
    \sum_{\substack{\tau_1, \tau_1'\\ \dist(\tau_1,\tau_1')\approx \al_1}} \!\!
    \sum_{\substack{\tau\subset\tau_1\\ \tau'\subset\tau_1'}}\!
    \sum_{\substack{T\in\ZT_{\tau,\la, \mu, \nu} \\ T'\in\ZT_{\tau',\la, \mu, \nu}}}\!\!
    \sum_{I,I'}
    \int\!\big|\!\!\!\!\sum_{j\in\cj_{T,\mu}(I)}\!\!\!\!\!S_{\tau,j}(f_{T_j})\big|^2\big|\!\!\!\!\sum_{j'\in\cj_{T',\mu}(I')}\!\!\!\!\!\!S_{\tau',j'}(f_{T_j'})\big|^2.
\end{align}

Apply the Cauchy-Schwarz inequality to the index sets $\mathcal{J}_{T, \mu}(I)$ and $\mathcal{J}_{T',\mu}(I')$, then invoke Lemma \ref{l4-ortho-lem-2}. 
Consequently, we arrive at 
\begin{align}
    \nonumber \int \!\!\big|\eqref{before-bn}\big|^4 \!\lesssim &\, \be\nu\mu  \!\!\!\!\!\!\!\!\!\!\!\!
    \sum_{\substack{\tau_1, \tau_1'\\ \dist(\tau_1,\tau_1')\approx \al_1}} \!\!
    \sum_{\substack{\tau\subset\tau_1\\ \tau'\subset\tau_1'}}\!
    \sum_{\substack{T\in\ZT_{\tau,\la, \mu, \nu} \\ T'\in\ZT_{\tau',\la, \mu, \nu}}}   
    \sum_{\substack{j\in\cj_T\\j'\in\cj_{T'}}}
    \sum_{\substack{\theta\subset\tau\\\theta'\subset\tau'}}\!
    \int\!\big|S_{\theta,j}(f_{T_j})\big|^2\big|S_{\theta',j'}(f_{T_{j'}'})\big|^2.
\end{align}
For each $\tau_1$, partition $B_R$ into $R\al_1\times R$-rectangles $\{T_{\tau_1}\}$, pointing the direction $e(\tau_1)$.
Since, for $\theta \subset \tau_1 \cup \tau_1'$, the kernel of $S_{\theta,j}$ decays rapidly outside an $R^{\alpha_1} \times R$ rectangle centered at the origin and oriented along $e(\tau_1)$, at a cost of $O(1)$, we can insert a cutoff in $S_{\theta,j}(f_{T_j})$ as $S_{\theta,j}(f_{T_j} \Id_{T_{\tau_1}})$ and obtain
\begin{align}\label{before-sum-up-1} 
    & \int \big|\eqref{before-bn}\big|^4
    \lesssim \be\nu\mu \cdot\\\nonumber
    \sum_{\tau_1, T_{\tau_1}}&\!\!\!\!
    \sum_{\substack{\tau_1'\\ \dist(\tau_1,\tau_1')\approx \al_1}} \!\!
    \sum_{\substack{\tau\subset\tau_1\\ \tau'\subset\tau_1'}}\!
    \sum_{\substack{T\in\ZT_{\tau,\la, \mu, \nu} \\ T'\in\ZT_{\tau',\la, \mu, \nu}}}\!   
    \sum_{\substack{j\in\cj_T\\j'\in\cj_{T'}}}
    \sum_{\substack{\theta\subset\tau\\\theta'\subset\tau'}}\!\int\!  \big| S_{\theta, j}(f_{T_j}\Id_{T_{\tau_1}})\big|^2 \big|S_{\theta', j'}(f_{T_{j'}'}\Id_{T_{\tau_1}})\big|^2.
\end{align}

Given that $|S_{\theta, j}(f_{T_j}\Id_{T_{\tau_1}})|$ remains nearly constant within any $R^{1/2}\times R$-tube oriented along $e(\theta)$, and considering the bilinear condition $\dist(\tau_1,\tau_1')\gtrapprox \alpha_1$, it becomes evident that 
\begin{equation}\label{**}
    \int |S_{\theta, j}(f_{T_j}\Id_{T_{\tau_1}})|^2|S_{\theta', j'}(f_{T'_{j'}}\Id_{T_{\tau_1}})|^2\lessapprox R^{-2}\al_1^{-1}\big\|S_{\theta, j}(f_{T_j}\Id_{T_{\tau_1}})\big\|_2^2\big\|S_{\theta', j'}(f_{T_{j'}'}\Id_{T_{\tau_1}})\big\|_2^2.
\end{equation}
In the specific scenario where $\al_1=R^{-1/2}$, it's worth mentioning that (\ref{**}) remains valid even without relying on the bilinear structure. 

\medskip

Note that $\#\{\tau_1', \dist(\tau_1,\tau_1')\approx \al_1\}\lessapprox1$. 
We can thus sum up all $\tau\subset\tau_1,\theta,T,j$ and all $\tau'\subset\tau_1',\theta',T',j'$ in \eqref{before-sum-up-1} to get
\begin{align}
  \nonumber  \int \big|\eqref{before-bn}\big|^4\lessapprox \be\nu\mu R^{-2}\al_1^{-1}\sum_{\tau_1}\sum_{T_{\tau_1}}\bigg(\sum_{\tau\subset\tau_1}\sum_{T\in\ZT_{\tau,\la, \mu, \nu}}\sum_{j\in\cj_T}\big\|S_{\tau, j}(f_{T_j}\Id_{T_{\tau_1}})\big\|_2^2\bigg)^2.
\end{align}
Invoke \eqref{l2-1-1} to obtain
\begin{equation}
\label{use-refined-l2}
    \big\|S_{\tau, j}(f_{T_j}\Id_{T_{\tau_1}})\big\|_2^2\lesssim\prod_{k=1}^n\ka_{r_k}\big\|f_{T_j}\Id_{T_{\tau_1}}\big\|_{2}^2,
\end{equation}
which yields
\begin{align}
\nonumber
    \int \big|\eqref{before-bn}\big|^4\lessapprox \be\nu\mu\prod_{k=1}^n\ka_{r_k}^2R^{-2}\al_1^{-1}\sum_{\tau_1}\sum_{T_{\tau_1}}\bigg(\sum_{\tau\subset\tau_1}\sum_{T\in\ZT_{\tau,\la, \mu, \nu}}\sum_{j\in\cj_T}\big\|f_{T_j}\Id_{T_{\tau_1}}\big\|_{2}^2\bigg)^2.
\end{align}
Because the supports of $f_{T_j}$'s are disjoint, employing the Cauchy-Schwarz inequality yields that 
\begin{align}
    \bigg(\sum_{\tau\subset\tau_1}\sum_{T\in\ZT_{\tau, \la, \mu, \nu}}\sum_{j\in\cj_T}\big\|f_{T_j}\Id_{T_{\tau_1}}\big\|_{2}^2\bigg)^2&=\bigg\|\sum_{\tau\subset\tau_1}\sum_{T\in\ZT_{\tau, \la,\mu,\nu}}\sum_{j\in\cj_T}f_{T_j}\Id_{T_{\tau_1}}\bigg\|_{2}^4\\
    &\lesssim R^2\al_1\bigg\|\sum_{\tau\subset\tau_1}\sum_{T\in\ZT_{\tau, \la,\mu,\nu}}\sum_{j\in\cj_T}f_{T_j}\Id_{T_{\tau_1}}\bigg\|_{4}^4.
\end{align}
By summing up all $T_{\tau_1}$ for $\tau_1$, and observing that $\mu \lesssim \nu$, we ultimately obtain
\begin{equation}
    \int \big|\eqref{before-bn}\big|^4\lesssim\be\nu^2\prod_{k=1}^n\ka_{r_k}^2\big\|\{f_{T_j}\}\big\|_{L^4(\ell^4)}^4\,.
\end{equation}
This leads to \eqref{before-bn-l4} and completes the discussion for the case where $\alpha_1\geq\alpha$.

\medskip 

\noindent{\bf{Case II}}, where $\al_1<\al$. 
Recall that $\varepsilon$ is fixed at the beginning of Section \ref{three propositions}. 
We assume that $\alpha_1 > R^{-1/2 + \varepsilon^2}$. 
The remaining narrow case $\alpha_1 \in [R^{-1/2},\, R^{-1/2 + \varepsilon^2})$ is simpler and can be handled in a similar manner, at the cost of a factor of $R^{O(\varepsilon^2)}$ arising from the triangle inequality.

For the case $\alpha_1 > R^{-1/2 + \varepsilon^2}$, we argue similarly as in Case I and obtain \eqref{before-finer-parti}.  
The treatment of \eqref{before-finer-parti} diverges from Case I: if we were to bound its right-hand side as in \eqref{use-refined-l2}, we would face the difficulty of summing all terms $\|f_{T_j}\mathbf{1}_{2T_{\tau_1}}\|_2^2$ over all $T_{\tau_1}$ contained in $T$.  

To overcome this issue, we employ a Carleson-Sj\"olin type argument and further decompose the $\alpha R \times R$ rectangle $T$ into vertical $R\alpha \times R^{1/2}(\alpha \log R)^{-1}$ rectangles $\{Y\}$, where the scale $R^{1/2}(\alpha \log R)^{-1}$ is chosen to match the dimensions of $R^{1/2}$-pseudo balls (see Figure \ref{Fig2}).  
When we restrict $f_{T_j}\mathbf{1}_{2T_{\tau_1}}$ to an $R\alpha \times R^{1/2}(\alpha \log R)^{-1}$ rectangle $Y$, the aforementioned difficulty disappears, allowing us to sum over all corresponding $T_{\tau_1}$ without issue.

\smallskip

Let us turn to the details.
Since $\al_1<\al$, there is only one $\tau$ contributing in \eqref{before-bn}, so we may discard the information from $\cb_\be$. By the definition of $\al_1$-broad, we have
\begin{align}
 \nonumber   \int\! \big|\eqref{before-bn}\big|^4\lesssim&
    \!\sum_{\tau}\!\!\sum_{T\in\ZT_{\tau,\la, \nu}}\!\sum_{\tau_1\subset\tau}\!\!\!\!\!\!\!\sum_{\substack{\tau'_1\\ \dist(\tau_1,\tau_1')\approx \al_1}}
   \! \!\!\!\!\!\!\!\int \!\big|\sum_{j\in\cj_{T}}S_{\tau_1,j}(f_{T_j})\big|^2\big|\sum_{j'\in\cj_{T}}S_{\tau_1',j'}(f_{T_{j'}})\big|^2.
\end{align}
Apply Cauchy-Schwarz's inequality and then Lemma \ref{l4-ortho-lem-2} on the $L^4$ orthogonality to get 
\begin{align}
    \int\! \big|\eqref{before-bn}\big|^4\!\lesssim&
     \!\sum_{\tau}\!\!\!\sum_{T\in\ZT_{\tau,\la, \nu}}\!\sum_{\tau_1\subset\tau}\!\!\!\!\!\!\!\!\!\sum_{\substack{\tau'_1\\ \dist(\tau_1,\tau_1')\approx \al_1}}  
    \sum_{\substack{j\in\cj_{T}\\{j'\in\cj_{T}}}}\!
    \sum_{\substack{\theta\subset\tau_1\\{\theta'\subset\tau_1'}}}
    \!\!\nu^2\!\!\int\!\!\big|S_{\theta,j}(f_{T_j}\!)\big|^2\big|S_{\theta',j'}(f_{T_{j'\!}})\big|^2.
\end{align}
For every $\tau_1$ and each tube $T$ with dimensions $R\alpha \times R$, divide $T$ into $R\alpha_1 \times R$ rectangles $\{T_{\tau_1}\}$, oriented in the direction of $e(\tau_1)$.
Since $\al\geq R^{-1/2+\e^2}$, using the decay property of the kernel $S_{\theta,j}$, analogous to our approach in the first scenario, we obtain that  
\begin{align}
    \int \big|\eqref{before-bn}\big|^4\lesssim&\,\nu^2\sum_{\tau}\sum_{T\in\ZT_{\tau, \la, \nu}}\sum_{\tau_1\subset\tau}\sum_{\substack{\tau'_1\\ \dist(\tau_1,\tau_1')\approx \al_1}}\sum_{j\in\cj_{T}}\sum_{j'\in\cj_{T}}\sum_{T_{\tau_1}\subset T}\\ 
  \nonumber  &\sum_{\theta\subset\tau_1}\sum_{\theta'\subset\tau_1'}\int_{T_{\tau_1}}\big|S_{\theta,j}(f_{T_j}\Id_{2T_{\tau_1}})\big|^2\big|S_{\theta',j'}(f_{T_{j'}}\Id_{2T_{\tau_1}})\big|^2.
\end{align}
Since $|S_{\theta, j}(f_{T_j}\Id_{2T_{\tau_1}})|$ is essentially constant on any $R^{1/2}\times R$-tube with the direction of $e(\theta)$ and since $\dist(\tau_1,\tau_1')\gtrsim \al_1$,
\begin{align}
    \int_{T_{\tau_1}} &|S_{\theta, j}(f_{T_j}\Id_{2T_{\tau_1}})|^2|S_{\theta', j'}(f_{T_{j}}\Id_{2T_{\tau_1}})|^2\\
    &\lessapprox R^{-2}\al_1^{-1}\big\|S_{\theta, j}(f_{T_j}\Id_{2T_{\tau_1}})\big\|_{L^2(T_{\tau_1})}^2\big\|S_{\theta', j'}(f_{T_{j}}\Id_{2T_{\tau_1}})\big\|_{L^2(T_{\tau_1})}^2.
\end{align}
Summing up all $\theta, j, \theta', j'$ and noting $\#\{\tau_1', \dist(\tau_1,\tau_1')\approx \al_1\}\lessapprox1$, we get
\begin{align}
\label{before-finer-parti}
    \int \big|\eqref{before-bn}\big|^4\!\lessapprox \frac{ \nu^2}{R^2\al_1}\!\sum_\tau\!\!\sum_{T\in\ZT_{\tau, \la, \nu}}\! \sum_{ \substack{\tau_1\subset\tau\\{T_{\tau_1}\subset T}}}\!\!\!\bigg(\!\sum_{\theta\subset\tau_1\cap\tau}\sum_{j\in\cj_T}\big\|S_{\theta, j}(f_{T_j}\Id_{2T_{\tau_1}})\big\|_{L^2(T_{\tau_1})}^2\bigg)^2.
\end{align}

Let us fix a $T\in\ZT_{\tau, \la, \nu }$. Partition the $T$ into $R^{1/2}$-pseudo balls $\{Q\}$. Since $T_j$ is a regular set with factors $(\ka_1,\ldots,\ka_n)$, we see that  $\#\{Q: Q\cap \wt T\cap T_j\not=\varnothing\}\lessapprox \ka_n\al R^{1/2}$ for any $R^{1/2}\times R$ tube $\wt T$ by \eqref{pseudo-ball-count} in Lemma \ref{lem-5.2}. Let $\{\wt T\}$ be a collection of $R^{1/2}\times R$ tubes with direction $e(\theta)$ that form a finitely overlapping partition of the $R\al\times R$ tube $T$. Since the kernel of $S_{\theta, j}$ decays rapidly outside an $R^{1/2}\times R$ tube, centered at the origin, with the direction of $e(\theta)$, we have
\begin{equation}\label{.}
    \big\|S_{\theta, j}(f_{T_j}\Id_{2T_{\tau_1}})\big\|_{L^2(T_{\tau_1})}^2\lesssim\sum_{\wt T}\big\|S_{\theta, j}(f_{T_j}\Id_{2T_{\tau_1}}\Id_{\wt T})\big\|_{L^2(T_{\tau_1})}^2.
\end{equation}
From Cauchy-Schwarz's inequality and estimate \eqref{pseudo-ball-count} in Lemma \ref{lem-5.2}, it follows that 
\begin{equation}\label{..}
    \big\|S_{\theta, j}(f_{T_j}\Id_{2T_{\tau_1}}\Id_{\wt T})\big\|_{L^2(T_{\tau_1})}^2\lessapprox\ka_n
    \al R^{1/2}\sum_{Q\subset \wt T}\big\|S_{\theta, j}(f_{T_j}\Id_Q\Id_{2T_{\tau_1}})\big\|_{L^2(T_{\tau_1})}^2.
\end{equation}
Combining (\ref{.}) and (\ref{..}), we end up with 
\begin{align}
\label{kappa-n}
    \big\|S_{\theta, j}(f_{T_j}\Id_{2T_{\tau_1}})\big\|_{L^2(T_{\tau_1})}^2\lessapprox\ka_n
    \al R^{1/2}\sum_{Q\subset T}\big\|S_{\theta, j}(f_{T_j}\Id_{Q}\Id_{2T_{\tau_1}})\big\|_{L^2(T_{\tau_1})}^2.
\end{align}

Let $\{Y\}$ be a cover of the $R\al\times R$-tube $T$ using $R\al\times R^{1/2}(\al\log R)^{-1}$-rectangles, whose longer side is parallel to the shorter side of $T$. 
Then each $Y$ is quantitatively transverse to $T$ and any $T_{\tau_1}$. From \eqref{kappa-n},  we get
\begin{align}
 \nonumber   &\sum_{\tau_1\subset\tau}\sum_{T_{\tau_1}}\bigg(\sum_{\theta\subset\tau_1\cap\tau}\sum_{j\in\cj_T}\big\|S_{\theta, j}(f_{T_j}\Id_{2T_{\tau_1}})\big\|^2_{L^2(T_{\tau_1})}\bigg)^2\\
  \nonumber  \lessapprox&\,\ka_n^2\al^2R\sum_{\tau_1\subset\tau}\sum_{T_{\tau_1}}\bigg(\sum_{\theta\subset\tau_1\cap\tau}\sum_{j\in\cj_T}\sum_{Y\subset T}\sum_{Q\subset Y}\big\|S_{\theta, j}(f_{T_j}\Id_{Q}\Id_{2T_{\tau_1}})\big\|_{L^2(T_{\tau_1})}^2\bigg)^2\\ \label{before-finer-parti-2}
    \lesssim&\,\ka_n^2\al^3R^{3/2}\sum_{Y\subset T}\sum_{\tau_1\subset\tau}\sum_{T_{\tau_1}}\bigg(\sum_{\theta\subset\tau_1\cap\tau}\sum_{j\in\cj_T}\sum_{Q\subset Y}\big\|S_{\theta, j}(f_{T_j}\Id_{Q}\Id_{2T_{\tau_1}})\big\|_{L^2(T_{\tau_1})}^2\bigg)^2
\end{align}
via Cauchy-Schwarz's inequality. Let us fix an $R\al\times R^{1/2}(\al\log R)^{-1}$-rectangle $Y$ now. Partition $Y$ further into $R\al_1\times R^{1/2}(\al\log R)^{-1}$-rectangles $Y'$ (see Figure \ref{Fig2}) so that
\begin{equation}\label{48}
    \sum_{Q\subset Y}\big\|S_{\theta, j}(f_{T_j}\Id_{Q}\Id_{2T_{\tau_1}})\big\|_{L^2(T_{\tau_1})}^2=\!\!\!\sum_{\substack{Y'\subset Y,\\Y'\cap T_{\tau_1}\not=\varnothing}}\sum_{Q\subset Y'}\big\|S_{\theta, j}(f_{T_j}\Id_{Q}\Id_{2T_{\tau_1}})\big\|_{L^2(T_{\tau_1})}^2.
\end{equation}
Since $Y$ is quantitatively transverse to $T_{\tau_1}$, there are $O(1)$ $Y'\subset Y$ such that $Y'\cap T_{\tau_1}\not=\varnothing$. From this observation and (\ref{48}), we see that 
\begin{align}
  \nonumber  &\sum_{\tau_1\subset\tau}\sum_{T_{\tau_1}}\bigg(\sum_{\theta\subset\tau_1\cap\tau}\sum_{j\in\cj_T}\sum_{Q\subset Y}\big\|S_{\theta, j}(f_{T_j}\Id_{Q}\Id_{2T_{\tau_1}})\big\|_{L^2(T_{\tau_1})}^2\bigg)^2\\
  \nonumber  \lesssim &\sum_{\tau_1\subset\tau}\sum_{T_{\tau_1}}\sum_{\substack{Y'\subset Y,\\Y'\cap T_{\tau_1}\not=\varnothing}}\bigg(\sum_{\theta\subset\tau_1\cap\tau}\sum_{j\in\cj_T}\sum_{Q\subset Y'}\big\|S_{\theta, j}(f_{T_j}\Id_{Q}\Id_{2T_{\tau_1}})\big\|_{L^2(T_{\tau_1})}^2\bigg)^2\\ \label{double-sum}
    \lesssim &\sum_{\tau_1\subset\tau}\sum_{T_{\tau_1}}\sum_{\substack{Y'\subset Y,\\Y'\cap T_{\tau_1}\not=\varnothing}}\bigg(\sum_{\theta\subset\tau_1\cap\tau}\sum_{j\in\cj_T}\sum_{Q\subset Y'}\big\|S_{\theta, j}(f_{T_j}\Id_{Q})\big\|_2^2\bigg)^2.
\end{align}
In the last inequality, we use that the kernel of $S_{\theta, j}$ decays rapidly outside the $R\al_1\times R$ tube, centered at the origin,  with the  direction of $e(\tau_1)$.  Again, since there are $O(1)$ many $Y'\subset Y$ such that $Y'\cap T_{\tau_1}\not=\varnothing$, the double sum of \eqref{double-sum} on $(T_{\tau_1}, Y')$ can be bounded by a single sum of $Y'$. 
Thus, we have  
\begin{align}
    \nonumber  \text{R.H.S. of } \eqref{double-sum}\lesssim &\sum_{\tau_1\subset\tau}\sum_{Y'\subset Y}\bigg(\sum_{\theta\subset\tau_1\cap\tau}\sum_{j\in\cj_T}\sum_{Q\subset Y'}\big\|S_{\theta, j}(f_{T_j}\Id_{Q})\big\|_2^2\bigg)^2\\
    \nonumber  \lesssim&\sum_{Y'\subset Y}\bigg(\sum_{\tau_1\subset\tau}\sum_{\theta\subset\tau_1\cap\tau}\sum_{Q\subset Y'}\sum_{j\in\cj_T}\big\|S_{\theta, j}(f_{T_j}\Id_{Q})\big\|_2^2\bigg)^2\\ \label{after-bil-1}
    \lesssim&\sum_{Y'\subset Y}\bigg(\sum_{Q\subset Y'}\sum_{j\in\cj_T}\big\|S_{\tau, j}(f_{T_j}\Id_{Q})\big\|_2^2\bigg)^2.
\end{align}

\begin{figure}
\begin{tikzpicture}
  
\draw[dashed] (0,1) rectangle (12,7);

\draw[rotate around={10:(1,1)}] (.7,2.8) rectangle (12.3,4.8);

\draw[dotted,fill=blue, opacity=.1] (0,1) rectangle (3,7);

\draw[dotted,fill=blue, opacity=.15] (0,3) rectangle (3,5);

\draw[rotate around={5:(1,1)}] (.5,3.9) rectangle (11.7,4.4);

\draw[dotted,fill=blue, opacity=.25] (0,4) rectangle (3,4.5);

\node at (10,2) {$T_j$};

\node at (10,6) {$T_{\tau_1}$};

\node at (10,4.94) {$\wt T$};

\node at (2,2) {$Y$};

\node at (2,3.4) {$Y'$};

\node at (2,4.2) {$Q$};

\end{tikzpicture}
\caption{Possible locations of tubes and pseudo-balls}
\label{Fig2}
\end{figure}

For each $\big\|S_{\tau, j}(f_{T_j}\Id_{Q})\big\|_2^2$, \eqref{l2-2} gives
\begin{equation}
    \big\|S_{\tau, j}(f_{T_j}\Id_{Q})\big\|_2^2\lessapprox \al^{-1} R^{-1/2}\prod_{k=1}^{n-1}\ka_{r_k}\big\|f_{T_j}\Id_{Q}\big\|_2^2.
\end{equation}
Plug this back to \eqref{after-bil-1} so that
\begin{align}
    \text{R.H.S. of }\eqref{double-sum}&\lessapprox \al ^{-2}R^{-1}\prod_{k=1}^{n-1}\ka_{r_k}^2\sum_{Y'\subset Y}\bigg(\sum_{Q\subset Y'}\big\|\sum_{j\in \cj_T}f_{T_j}\Id_{Q}\big\|_2^2\bigg)^2\\
    &\lesssim \al^{-2}R^{-1}(R^{3/2}\al_1\al^{-1})\prod_{k=1}^{n-1}\ka_{r_k}^2\sum_{Y'\subset Y}\Big\|\sum_{j\in \cj_T}f_{T_j}\Id_{Y'}\Big\|_4^4\\
    &\lesssim(\al^{-3}R^{1/2}\al_1)\prod_{k=1}^{n-1}\ka_{r_k}^2\sum_{j\in\cj_T}\big\|f_{T_j}\Id_{Y}\big\|_4^4
\end{align}
by Cauchy-Schwarz's inequality. 
Putting this back into \eqref{before-finer-parti-2}, and subsequently into \eqref{before-finer-parti}, we arrive at
\begin{equation*}
    \int \big|\eqref{before-bn}\big|^4\lessapprox\nu^2\prod_{k=1}^{n}\ka_{r_k}^2\sum_\tau\sum_{T\in\ZT_{\tau, \la, \nu}}\sum_{Y\subset T}\sum_{j\in\cj_T}\big\|f_{T_j}\Id_{Y}\big\|_4^4\lesssim\nu^2\prod_{k=1}^{n}\ka_{r_k}^2\big\|\{f_{T_j}\}\big\|_{L^4(\ell^4)}^4.
\end{equation*}
Since $\be\geq1$, this gives \eqref{before-bn-l4} when $\al_1<\al$.  
Thus, we conclude the proposition.
\qedhere

\end{proof}

\bigskip

\subsection{Completing the proof of Proposition \ref{reduction-lem}}
\label{wrap-up}

Let's establish Proposition \ref{reduction-lem} by utilizing Propositions \ref{l2-prop1}, \ref{l2-prop2}, and \ref{l4/3-prop}.
 Define
\begin{equation}
    I_p(f)^p:=\sum_\tau\sum_{T\in\ZT_{\tau, \la, \nu}}\sum_{j\in\cj_T}\big\|S_{\tau, j}f\Id_{T_j}\big\|_{p}^{p}.
\end{equation}

First, Proposition \ref{l2-prop1} yields 
\begin{align}
\label{final-1}
    I_2(f)^2\lessapprox\prod_{k=1}^n\ka_{r_k}^{-1}\Big(\frac{\la}{R^2\al}\Big)\big\|f\big\|_2^2.
\end{align}

Subsequently, utilizing \eqref{reduction-before-local-l2}, Proposition \ref{l2-prop2}, and Propositions \ref{l4/3-prop}, we obtain
%Then, via \eqref{reduction-before-local-l2}, Proposition \ref{l2-prop2} and Propositions \ref{l4/3-prop} give
\begin{align}
\label{final-2}
    I_2(f)^2
    \lessapprox R^{\e^2}\Big(\frac{R^2\al}{\la\nu}\Big)\be^{-1}\big\|f\big\|_2^2
\end{align}
and (since $\ZT_{\tau,\be}\subset\ZT_{\tau, \la, \nu}$)
\begin{align}
\label{final-3}
    I_{4/3}(f)^{4/3}\lessapprox \be^{1/3}\nu^{2/3}\prod_{k=1}^n\ka_{r_k}^{2/3}\big\|f\big\|_{4/3}^{4/3}
\end{align}
respectively. 

\smallskip

Given that the Fourier multiplier of $S_{\tau,j}$ is confined within a unit ball, we may assume that $\widehat{f}$ is supported in $B^2(0,2)$, a ball centered at the origin with radius $2$, implying that $|f|$ is essentially constant within any unit ball. Furthermore, as the kernel of $S_{\tau,j}$ exhibits rapid decay beyond the $R$-ball centered at the origin, we can infer that $f$ is also supported in an $R$-ball. Additionally, considering the homogeneity of \eqref{reduction-3} in Proposition \ref{reduction-lem}, we can assume $|f|\leq 1$ and $\sup |f|\sim 1$. Consequently, $\|f\|_{5/3}\gtrsim 1$, as $|f|$ essentially remains constant within any unit ball, notably within the unit ball containing $x$ where $|f(x)|\sim 1$. We partition $f$ as
\begin{equation}
    f=\sum_{k\geq 0}f_k
\end{equation}
where $|f_k|\sim 2^{-k}$ behaves essentially like a characteristic function. 

\medskip

Let's tackle each $f_k$. 
Multiplying the square of \eqref{final-1} with \eqref{final-2} and the cubic of \eqref{final-3}, we obtain
%Consider each $f_k$. Multiply the square of \eqref{final-1} with \eqref{final-2}, and with the cubic of \eqref{final-3} to get 
\begin{equation}
\label{f_k}
    I_2(f_k)^6I_{4/3}(f_k)^4\lessapprox  R^{\e^2}\Big(\frac{\la\nu}{R^2\al}\Big)\big\|f_k\big\|_2^6\big\|f_k\big\|_{4/3}^4\lesssim R^{\e^2}\big\|f_k\big\|_{5/3}^{10}\,,
\end{equation}
since $\la\nu\lesssim R^2\al$ by (\ref{la-nu}). 
Employing Hölder's inequality and \eqref{f_k}, we derive 
\begin{equation}\label{!}
    I_{5/3}(f_k)\leq I_2(f_k)^{3/5}I_{4/3}(f_k)^{2/5}\lessapprox  R^{\e^2}\|f_k\|_{5/3}.
\end{equation}

If $2^k\geq R^{10}$, as $f$ is supported in an $R$-ball, we have $\|f_k\|_{5/3}\lesssim 2^{-k/2}\|f\|_{5/3}$. 
Hence, by \eqref{!},  
\begin{align}
   I_{5/3}(f)\leq\sum_{k\geq 0}I_{5/3}(f_k)&\leq \sum_{k=0}^{[10\log R]}I_{5/3}(f_k)+\sum_{k\geq 10\log R}2^{-k/2}\|f\|_{5/3}\\
 &\lesssim(\log R)\|f\|_{5/3}\lessapprox\|f\|_{5/3}.
\end{align}
These establish Proposition \ref{reduction-lem} and consequently Theorem \ref{main}.
\qed

\bigskip

\section{Ending remark: Ruling out Tao's example}
\label{ending-remark}

Tao's example $f(x_1,x_2)=a(R^{-1/2}x_1)a(x_2)e^{ix_1}$ can be  illustrated by Figure \ref{fig3}: On the frequency side, $\wh f$ is a bump function supported on an $R^{-1/2}\times1$ vertical rectangle (depicted as the shaded region in  Figure \ref{fig3}) away from the origin. 
The intersection of this rectangle with any $R^{-1}$-annulus $A_j$ of radius $t_j\sim1$  essentially forms  an $R^{-1/2}\times R^{-1}$ rectangular cap $\theta_j$. 
Specifically, if $\dist(\theta_j,\theta_k)\gtrsim R^{-1/2}$, then $\theta_j,\theta_k$ correspond to different directions. 
For example, $\theta_1,\theta_2,\theta_3$ in Figure \ref{fig3} give distinct directions.

\begin{figure}[hb!t]
\begin{tikzpicture}    
\draw [black, dotted, domain=30:100] plot ({5*cos(\x)}, {5*sin(\x)});
\draw [black, dotted, domain=30:100] plot ({4.8*cos(\x)}, {4.8*sin(\x)});
\draw [black, dotted, domain=30:100] plot ({4*cos(\x)}, {4*sin(\x)});
\draw [black, dotted, domain=30:100] plot ({3.8*cos(\x)}, {3.8*sin(\x)});
\draw [black, dotted, domain=30:100] plot ({3*cos(\x)}, {3*sin(\x)});
\draw [black, dotted, domain=30:100] plot ({2.8*cos(\x)}, {2.8*sin(\x)});

\draw[dotted,fill=blue, opacity=.1] (1.8,1) rectangle (2.5,5);

\draw[color=blue,thick,rotate around={40:(0,0)}] (2.8,-.6) rectangle (3,.6);

\draw[color=blue,thick,rotate around={55:(0,0)}] (3.8,-.5) rectangle (4,.5);

\draw[color=blue,thick,rotate around={63:(0,0)}] (4.8,-.45) rectangle (5,.45);

\draw [->] (0,1.5) -- (0,5.5) node [anchor=south east] {$t$};

\node at (2.3,4.34) {\tiny{$\theta_1$}};
\node at (0.2,4.6) {\small$t_1$};;
\fill (0,4.9) circle (.5mm);

\node at (2.3,3.16) {\tiny{$\theta_2$}};
\node at (0.2,3.6) {\small$t_2$};;
\fill (0,3.9) circle (.5mm);

\node at (2.3,1.77) {\tiny{$\theta_3$}};
\node at (0.2,2.6) {\small$t_3$};;
\fill (0,2.9) circle (.5mm);

\end{tikzpicture}
\caption{}
\label{fig3}
\end{figure}

To exclude Tao's example, one may pose the following question: If the frequency of the function $f$ is well-localized in a horizontal $R^{-1/2}\times 1$-rectangle away from the origin, can we verify Conjecture \ref{mbr-conj}?
We will demonstrate that our approach can handle this case.

\medskip

Let $\vp(x_1,x_2)=a(R^{-1/2}x_1)a(x_2)e^{ix_1}$ be a bump function. Define a family of frequency-localized (with respect to the horizontal rectangle given by $\vp$) operators 
\begin{equation}
    S^\vp_jf:=S_j(\vp\ast f), \hspace{.5cm}1\leq j\leq R.
\end{equation}
\begin{proposition}
Let $\{F_j\}_j$ be a family of disjoint subsets of $B_R$ such that each $F_j$ is a union of finitely overlapping unit balls. 
For $p=3/2$ and any $f\in L^p(\ZR^2)$, we have 
\begin{equation}
\label{main-esti-ending}
    \big\|\sum_{j}S^\vp_jf\Id_{F_j}\big\|_p\lessapprox\big\|f\big\|_p.
\end{equation}
\end{proposition}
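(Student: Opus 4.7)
The plan is to adapt the proof of Theorem~\ref{main-thm} to exploit the constrained Fourier support of $\varphi \ast f$, which will improve the $L^{4/3}$ estimate of Proposition~\ref{l4/3-prop} and allow the interpolation to reach $p=3/2$. Applying the broad-narrow decomposition (Lemma~\ref{broad-narrow}), the linearization (Lemma~\ref{linearization-lem}), and the multi-scale pigeonholes of Section~\ref{Broad-narrow-section}, the problem reduces to verifying the analogue of Proposition~\ref{reduction-lem} with $p=3/2$ for the operators $S_{\tau,j}^\varphi$. The first simplification is geometric: since $S_j^\varphi f$ has Fourier support in the single $R^{-1/2}\times R^{-1}$ cap $\theta_j = A_{t_j,R}\cap R_\varphi$, the $\alpha_1$-broad case of Lemma~\ref{broad-narrow} with $\alpha_1 > R^{-1/2}$ would require $\theta_j$ to lie in many essentially disjoint $\alpha_1$-caps simultaneously, which is impossible. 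Hence $\alpha = R^{-1/2}$ throughout, and every $T\in\mathbb T_{\tau,\lambda,\nu}$ is an $R^{1/2}\times R$ tube.

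The central new input is the rigid pairing $j\leftrightarrow\tau$ induced by $\varphi$: for each $\alpha$-cap $\tau$ centered at angular position $\phi_\tau$, only those $j$ with $\sqrt{t_j^2-1}\in\tau$ contribute, and the corresponding $t_j$'s lie in a single interval of length $O(\alpha)$. This yields (i) $\nu\leq |\mathcal J_\tau|\lesssim\alpha R$ (improving the generic $\nu\leq R$), and (ii) in Lemma~\ref{l4-ortho-lem-1} applied to Case~I of the proof of Proposition~\ref{l4/3-prop}, the multiplicity $\mu_2$ of relevant $\alpha/\beta$-intervals per cap shrinks from $\beta/\alpha$ to $\beta$. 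Propagating the sharpened $\mu_2$ (together with the fact that only specific $(\tau,j)$ pairs yield nonzero contribution) through the Cauchy-Schwarz and bilinear $L^2$ steps in the proof of Proposition~\ref{l4/3-prop} produces an additional factor-of-$\alpha$ saving in the $L^{4/3}$ bound:
\begin{equation*}
\sum_\tau\sum_{T\in\mathbb T_{\tau,\lambda,\nu}}\sum_{j\in\mathcal J_T}\int\bigl|S_{\tau,j}^\varphi\bigl(\textstyle\sum_B f\,\Id_B\bigr)\,\Id_{T_j}\bigr|^{4/3}\lessapprox \alpha\,\beta^{1/3}\nu^{2/3}\prod_k\kappa_{r_k}^{2/3}\|f\|_{4/3}^{4/3},
\end{equation*}
while the $L^2$ estimates from Propositions~\ref{l2-prop1} and~\ref{l2-prop2} carry over unchanged.

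Combining Proposition~\ref{l2-prop2}'s bound $I_2^2 \lessapprox (R^2\alpha/(\lambda\nu))\beta^{-1}\|f\|_2^2$ with this improved $L^{4/3}$ estimate through the interpolation $I_{3/2}^{3/2}\leq I_2^{1/2}I_{4/3}$, and applying $\lambda\nu\leq R^2\alpha$ together with $\nu\leq\alpha R$, the overall dimensionless factor simplifies to $\lessapprox \alpha^{5/4}R^{1/2}$, which is $\lessapprox R^{-1/8}\lessapprox 1$ for $\alpha = R^{-1/2}$. The dyadic decomposition argument of Section~\ref{wrap-up}, applied level-set-by-level-set to $|f|$, then yields $I_{3/2}\lessapprox \|f\|_{3/2}$. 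The main obstacle lies in the adaptation of Proposition~\ref{l4/3-prop}: one must verify that the $\alpha$-saving from $\mu_2\leq\beta$ is not absorbed by the step $\mu\leq\nu$ used in the original proof to pass from $\beta\nu\mu$ to $\beta\nu^2$; the complementary lower bound $\mu\gtrsim\nu/\beta$, which is essentially equivalent to the Tao-improved $\mu_2\leq\beta$, turns out to be precisely the structural feature that prevents this loss.
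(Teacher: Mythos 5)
Your reduction to the narrow case $\alpha=R^{-1/2}$ and the observation $\nu\lesssim \alpha R$ are fine, but the core of your argument has a genuine gap: the claimed gain of a single factor $\alpha$ in the $L^{4/3}$ estimate is too weak to close the interpolation at $p=3/2$, and your final numerology contains a slip that hides this. With your stated bounds, $I_2^{1/2}I_{4/3}\lessapprox \big(\tfrac{R^2\alpha}{\lambda\nu}\big)^{1/4}\alpha^{3/4}\nu^{1/2}\,\|f\|_2^{1/2}\|f\|_{4/3} = R^{1/2}\alpha\,(\nu/\lambda)^{1/4}\,\|f\|_2^{1/2}\|f\|_{4/3}$, and the only available bounds $\lambda\geq 1$, $\nu\lesssim\alpha R$ give the prefactor $R^{3/4}\alpha^{5/4}=R^{1/8}$ at $\alpha=R^{-1/2}$, not $R^{1/2}\alpha^{5/4}=R^{-1/8}$; to get your figure you would need $\nu\lesssim\lambda$, which is unjustified and in fact fails exactly in the configuration the proposition is designed to handle (Tao's example itself has $\lambda\sim 1$ and $\nu\sim R^{1/2}$: each $R^{1/2}\times R$ tube meets $\sim R^{1/2}$ different sets $F_j$, each in a single unit ball). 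Equivalently, in the $I_2^2I_{4/3}^4$ form your saving leaves a factor $\sim \nu/\lambda$, so no bookkeeping of $\mu$, $\mu_2$ or $\beta$ can rescue an improvement that scales only like $\alpha$; moreover your asserted lower bound $\mu\gtrsim\nu/\beta$ has no justification ($\mu$ is a pigeonholed dyadic value with no such constraint).

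The paper's proof gains instead a factor proportional to the \emph{density} $\lambda/R^{3/2}$ of $T_j$ inside its tube, which is exactly what is needed against the constraint $\lambda\nu\lesssim R^{3/2}$. Concretely, it works directly with $R^{1/2}\times R$ tubes in the direction $e(\theta_j)$ (no broad--narrow step is needed), keeps Proposition \ref{l2-prop2} unchanged to get \eqref{l2-ending-remark}, and in the proof of Proposition \ref{l4/3-prop} replaces the refined-$L^2$ input \eqref{use-refined-l2} by
\begin{equation*}
\big\|S^\vp_{j}(f_{T_j}\Id_{T_{\tau_1}})\big\|_2^2\lesssim \la R^{-3/2}\big\|f_{T_j}\Id_{T_{\tau_1}}\big\|_{2}^2,
\end{equation*}
valid because $S^\vp_j g$ is essentially constant on $R^{1/2}\times R$ tubes in the direction $e(\theta_j)$ while $f_{T_j}$ lives on a set of measure $\la$ inside a tube of measure $R^{3/2}$. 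This yields the $L^{4/3}$ bound \eqref{l4/3-ending-remark} with the factor $\la^{2/3}R^{-1}$ in place of $\prod_k\ka_{r_k}^{2/3}$, and then $I_{3/2}^{6}\leq I_2^2 I_{4/3}^4\lessapprox (\la\nu/R^{3/2})\|f\|_2^2\|f\|_{4/3}^4\lesssim\|f\|_2^2\|f\|_{4/3}^4$, after which the level-set decomposition finishes the proof. To repair your argument you would need to prove an $L^{4/3}$ saving of this $\la$-dependent strength, not the directional multiplicity saving you propose.
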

\begin{proof}[Sketch of proof]
Notice that the Fourier transform of the kernel of $S^\vp_j$ is supported in an $R^{-1/2}\times R^{-1}$-rectangular cap $\theta_j$. Hence the kernel of $S^\vp_j$ is essentially supported on a $R^{1/2}\times R$-tube with the direction $e(\theta_j)$ centered at the origin. For a fixed $j$, partition $B_R$ into finitely overlapping $R^{1/2}\times R$-tubes with the direction $e(\theta_j)$. Denote this family of $R^{1/2}\times R$-tubes by $\ZT_{\theta_j}$. For each $T\in\ZT_{\theta_j}$, define
\begin{equation}
    T_j:=T\cap F_j.
\end{equation}
Note that the definition of $T_j$ here agrees with the one in Section \ref{Broad-narrow-section}.

\smallskip

By pigeonholing, there exists a subset $\ZT_{\theta_j}'\subset\ZT_{\theta_j}$ for any $j$ and two numbers $\la,\nu$ such that
\begin{enumerate}
    \item $|T_j|\sim \la$ for any $T\in\bigcup_{j}\ZT_{\theta_j}'$ and any $j$.
    \item For a fixed $T\in\bigcup_{j}\ZT_{\theta_j}'$, $\#\cj_T\sim\nu$, where $\cj_T:=\{j:|T_j|\sim\la\}$.
    \item $\big\|\sum_jS^\vp_jf\Id_{F_j}\big\|_p\lessapprox\big\|\sum_j\sum_{T\in\ZT_{\theta_j}'}S^\vp_jf\Id_{T_j}\big\|_p$.
    \item $\la\nu\lesssim R^{3/2}$.
\end{enumerate}
Let $\ZT_{\la,\nu}=\bigcup_j\ZT_{\theta_j}'$. Similar to Section \ref{local-L2-for-model-operator}, there exists a factor $R^{1/2}\gtrsim\be\geq1$, a set of $R^{1/2}$-balls $\cb_\be$, and a tube set $\ZT_\be\subset\ZT_{\la,\nu}$ such that
\begin{enumerate}
    \item $2B$ intersects $\sim\be$ $R^{1/2}\times R$-tubes in $\ZT_{\la,\nu}$ for each $B\in\cb_\be$.
    \item $|\ZT_\be|\gtrapprox|\ZT_{\la, \nu}|$
    \item For each $T\in\ZT_\be$, we have
    \begin{equation}
        \sum_{j\in\cj_T}\big\|S^\vp_{j}\big(\sum_{B\in\cb}f\Id_B\big)\Id_{T_j}\big\|_p^p\lessapprox\sum_{j\in\cj_T}\big\|S^\vp_{j}\big(\sum_{B\in\cb_\be}f\Id_B\big)\Id_{T_j}\big\|_p^p.
    \end{equation}
    \item  For each $T\in\ZT_\be$, $\#\{B\in\cb_\be: B\subset T\}$ are the same up to a constant multiple.
\end{enumerate}
Therefore, Proposition \ref{l2-prop2} implies that
\begin{equation}  
\label{l2-ending-remark}
    \sum_{T\in\ZT_{\be}}\sum_{j\in\cj_T}\big\|S^\vp_{j}\big(\sum_{B\in\cb_\be}f\Id_{ B}\big)\Id_{T_j}\big\|_2^2\lessapprox\Big(\frac{R^{3/2}}{\la\nu}\Big)\be^{-1}\big\|f\big\|_2^2.
\end{equation}

\smallskip

On the other hand, we claim that 
\begin{equation}
\label{l4/3-ending-remark}
    \sum_{T\in\ZT_{\la, \nu}}\sum_{j\in\cj_{T}}\int\big|S^\vp_{j}\big(\sum_{B\in\cb_\be}f\Id_{B}\big)\Id_{T_j}\big|^{4/3}\lessapprox \be^{1/3}\nu^{2/3}\la^{2/3}R^{-1}\big\|f\big\|_{4/3}^{4/3}.
\end{equation}
To prove \eqref{l4/3-ending-remark}, we follow the proof of Proposition \ref{l4/3-prop} until \eqref{use-refined-l2}, with the left-hand side of \eqref{l4/3} being replaced by the left-hand side of \eqref{l4/3-ending-remark}. Since for any function $g$, $S^\vp_jg$ is essentially constant on any $R^{1/2}\times R$-tube in the direction of $e(\theta_j)$, we obtain the following stronger bound in place of \eqref{use-refined-l2}:
\begin{equation}
        \big\|S^\vp_{j}(f_{T_j}\Id_{T_{\tau_1}})\big\|_2^2\lesssim\la R^{-3/2}\big\|f_{T_j}\Id_{T_{\tau_1}}\big\|_{2}^2.
\end{equation}
Then, we continue following the proof of Proposition \ref{l4/3-prop} to conclude \eqref{l4/3-ending-remark}.

\smallskip

Similar to the argument in Section \ref{wrap-up}, \eqref{l2-ending-remark} and \eqref{l4/3-ending-remark} together yield \eqref{main-esti-ending} for $p=3/2$.
\end{proof}

\bibliographystyle{alpha}
\bibliography{bibli}

\end{document}